\newtheorem{theorem}{Theorem}
\newtheorem{corollary}{Corollary}
\newtheorem{assumption}{Assumption}
\newtheorem{definition}[theorem]{Definition}
\newtheorem{lemma}[theorem]{Lemma}
\newtheorem{remark}[theorem]{Remark}
\title{AN ECO-EPIDEMIOLOGICAL MODEL WITH PREY-TAXIS AND \textquotedblleft SLOW\textquotedblright \, DIFFUSION: GLOBAL EXISTENCE, BOUNDEDNESS AND NOVEL DYNAMICS}
\author[1]{Ranjit Kumar Upadhyay$^{*}$}
\author[2]{Rana D. Parshad}
\author[1]{Namrata Mani Tripathi}
\author[3]{Nishith Mohan}
\affil[1]{Department of Mathematics and Computing, Indian Institute of Technology (Indian School of Mines), Dhanbad, Jharkhand, India}
\affil[2]{Department of Mathematics, Iowa State University, Ames, IA, United States of America}
\affil[3]{Department of Mathematics, Rheinland-Pf\"{a}lzische Technische Universit\"{a}t Kaiserslautern-Landau, Kaiserslautern, Germany}
\renewcommand\maketitle{%
  \begin{center}
    \normalsize
    {\bfseries \@title \par}
    \vskip 1em
    \@author
     \vskip 1em
    \footnotesize
    \textit{*Corresponding author. Email: ranjit@iitism.ac.in}
  \end{center}
}
\renewcommand\AB@affilsepx{\par\vspace{0.5em}}  
\begin{document}
\maketitle

	\begin{abstract}
In this manuscript, an attempt has been made to understand the effects of prey-taxis on the existence of global-in-time solutions and dynamics in an eco-epidemiological model, particularly under the influence of slow dispersal characterized by the $p$-Laplacian operator and enhanced mortality of the infected prey, subject to specific assumptions on the taxis sensitivity functions. We prove the global existence of classical solutions when the infected prey undergoes random motion and exhibits standard mortality. Under the assumption that the infected prey disperses slowly and exhibits enhanced mortality, we prove the global existence of weak solutions. Following a detailed mathematical investigation of the proposed model, we shift our focus to analyse the stability of the positive equilibrium point under the scenario where all species exhibit linear diffusion, the infected prey experiences standard mortality, and the predator exhibits taxis exclusively toward the infected prey. Within this framework, we establish the occurrence of a steady-state bifurcation. Numerical simulations are then carried out to observe this dynamical behavior. Our results have large scale applications to biological invasions and biological control of pests, under the prevalence of disease in the pest population.
\end{abstract}
\textbf{Keywords:} Eco-epidemic models; Prey taxis; Slow diffusion; Global existence; Classical solutions; Weak solutions; Steady state bifurcation; Finite time extinction \\

\noindent
\textbf{MSC codes.} 35K55, 35A01,35D30, 35B32

    \section{Introduction}
 The intricacies involved in ecological systems are too complex to be captured by the simple dynamical structure typically assumed in classical predator-prey models. Real-world ecosystems comprise complex interplays, such as disease transmission within prey populations, and spatial heterogeneities, which significantly influence the dynamics. These factors can significantly alter population dynamics, leading to unexpected spatial or temporal patterns, and in some cases, may even drive extinction events. Incorporating such complexities into mathematical models not only provides a nuanced perspective on population behaviour but also helps in understanding and predicting ecological outcomes. 
 
 In this context, eco-epidemiology emerges as a multidisciplinary field that explores the interplay between ecological interactions and the dynamics of infectious diseases within populations, particularly in predator-prey systems \cite{anderson1991populations,chattopadhyay1999predator}. While traditional spatial models often simplify movement patterns by assuming purely random diffusion for both predators and prey, such an assumption overlooks the complex and directed behaviors observed in natural ecosystems. In reality, organisms frequently respond to environmental cues that guide their movement in specific directions. For example, chemotaxis (response to chemical gradients), aero-taxis (response to oxygen concentration), and prey-taxis (response to prey density) are key mechanisms that influence spatial distributions by attracting or repelling individuals \cite{ainseba2008reaction}.

The concept of prey-taxis in ecological modeling was first introduced by Kareiva and Odell \cite{kareiva1987swarms} to account for predator aggregation in high prey density regions. This behavior is mathematically represented by incorporating nonlinear diffusion terms into the model. Chakraborty et al.\cite{chakraborty2007predator}  investigated its role in pattern formation within a Paramecium aurelia–Didinium nasutum predator-prey system, while Lee et al. \cite{lee2009pattern} further examined pattern dynamics under various functional responses. Several studies have demonstrated that prey-taxis significantly influences spatiotemporal dynamics in predator-prey systems. For instance, it can enhance biological control by guiding predator movement towards prey, helping stabilize pest populations, or, conversely, induce chaotic dynamics under high sensitivity levels \cite{sapoukhina2003role}. It may also lead to periodic, quasi-periodic, or chaotic oscillations in predator-prey systems, depending on the form of taxis and initial conditions \cite{chakraborty2008effect}. Prey-taxis have further been shown to suppress or promote spatial pattern formation depending on model structure and taxis cue \cite{lee2009pattern}. In steady-state analysis, prey-tactic sensitivity has been found to delay the stability of homogeneous equilibria and facilitate non-constant steady states when combined with Holling type II responses and Neumann boundary conditions \cite{li2014steady}. More recently, research has focused on one-predator–one-prey systems with prey-taxis \cite{wang2017nonconstant}, addressing global existence, steady states, and dynamical behavior. Wu et al. \cite{wu2016global} examined global existence and uniform persistence in predator-prey models with nonlinear prey-taxis, and Guo and Wang \cite{guo2019dynamics} studied the impact of dual prey-taxis mechanisms on dynamics and pattern formation. Their work highlighted how small sensitivity coefficients influence system behavior. Building on this, Wang and Guo \cite{wang2019dynamics} demonstrated that in a two-predator–one-prey model, stationary patterns emerge primarily due to large taxis components rather than diffusion or weak taxis effects.
Despite these advancements, the role of prey-taxis in systems involving both susceptible and infected prey remains largely unexplored. In this work, we aim to fill this gap by examining how dual prey-taxis influences solution existence, spatial dynamics, and pattern formation in eco-epidemiological systems. 

In most population models, species dispersal via the standard Laplacian operator - this is the case of linear diffusion \cite{jungel2010diffusive}. However, there is much motivation for non-linear dispersal in Biology, such as in long range dispersal of birds and insects, Levy flights, models with memory effects and non-linear harvesting problems \cite{chapman2015long, stinga2023fractional, song2019spatiotemporal, jungel2010diffusive}. Such dispersal/diffusion is also applicable in other areas including gas dynamics and kinetics, thin film dynamics, plasma physics and nuclear technology \cite{bonforte2024cauchy, stinga2023fractional}. There are two popular methods to model non-linear dispersal in the local context. These are the porous medium equation \cite{vazquez2007porous} and the non-newtonian filtration equation via the p-laplacian operator \cite{antontsev2015evolution}. They are amongst a class of problems known as ``degenerate" (also ``singular") problems \cite{dibenedetto2012degenerate, levine1984some}. Herein the ``type" or qualitative nature of the PDE changes in certain parts of the domain or certain parameteric regimes. Essentially, the type of a PDE is defined by one or more algebraic relations between the coefficients comprising strict inequalities. If, at certain points of the domain or boundary, these inequalities are weak rather than strict, the equation is called degenerate. An example is the equation $u_{t}=u_{yy} + y u_{x}, \ y \geq 0$ - which is degenerate when $y=0$. Both of these equations possess considerably greater mathematical challenges than the standard Laplacian counterpart - but can yield richer dynamics such as finite time extinction (FTE) \cite{antontsev2015evolution}. Roughly speaking the porous medium equation degenerates when the solution $u \approx 0$, whilst the non-newtonian filtration equation degenerates when $\nabla u \approx 0$ - yielding very different mathematical techniques for their individual analysis. In the current manuscript we will adhere to the latter's technology, to investigate its effect on an eco-epidemiological system of interest. 

The effect of dispersal via the $p$-Laplacian operator has recently been considered in chemotactic systems \cite{wang2023global} as well as Lokta-Volterra (LV) type competition systems \cite{yang2022global, rani2024global}. However, to the best of our knowledge, its effect on the analysis and dynamics of eco-epidemilogical systems has not been considered earlier. Kong and Liu \cite{cong2016degenerate} considered the Cauchy problem for the  case of the $p$-laplacian Keller-Segal model in spatial dimension $n\geq 3$, \cite{cong2016degenerate}, who under small data assumptions prove the existence of global weak solutions. Certain decay estimates for the solution as well as the extinction of solution in finite time is shown. Wang considered the chemotaxis-hapotaxis model with degradation of the chemical signal and $p$-laplacian motion \cite{wang2023global}. Herein global existence of weak solution is proved for $p> 1 + f(n)$, where $f$ depends on spatial dimension $n$, where $f \rightarrow 2, \ n\rightarrow \infty$. Li \cite{li2020global} considered a attraction repulsion chemotaxis problem with $p$-laplacian, and proved global existence of weak solution under certain parametric restrictions in the fast case ($1<p<2$), and for any parametric ranges in the slow case $(p>2)$.
The case of slow diffusion ($p > 2$), for a system of two competitors, both moving towards a chemical signal was first considered by \cite{yang2022global}, wherein a global bounded weak solution was proved for any bounded initial data and any positive range of parameters. This was extended to the fast diffusion case ($1<p<2$), in \cite{rani2024global}, where again global bounded weak solutions were proved for the $1<p<\frac{3}{2}$ case, for any positive data, and for small positive data in the $\frac{3}{2}<p<2$ case. In general with chemotaxis type of mechanisms, the higher spatial dimension problem $(n\geq 3)$, becomes considerably more difficult, and finite time blow-up is often possible, even with the presence of superlinear damping/absorption terms \cite{winkler2018finite}. Thus, one has to relegate analysis to showing the bounded weak solutions.

In the current manuscript, we consider an eco-epidemiological system with prey-taxis, in space dimension $n\leq 2$. The model consists of a reaction–diffusion-taxis partial differential equation (PDE) describing the evolution of a predator population density and two reaction–diffusion PDE governing the evolution of susceptible and infected prey population densities. We also prescribe bounded initial conditions and no-flux boundary conditions. For the infected prey, we generalize the classical Laplacian operator ($\Delta I$) modeling dispersal of the infected prey, to the $p$-Laplacian operator ($\Delta_{p} I = \nabla \cdot (|\nabla I|^{p-2}\nabla I)$, and consider $p \geq 2$. This divides the dispersal of the infected prey into two separate cases, (i) $p=2$, is the classical dispersal case, (ii) $p>2$ is the case wherein the infected individuals disperse ``slower". Furthermore, we also generalize the mortality term of the infected prey ($-\mu I^{\gamma}$) by considering $0 < \gamma \leq 1$. This also divides the mortality of the infected prey into two separate cases, (i) $\gamma=1$, is the classical mortality case, (ii) $0<\gamma < 1$ is the case wherein the infected prey disperse suffer enhanced or increased mortality, via a faster rate to extinction - in this case one can have finite time extinction (FTE) of the infected prey. The case of $0<\gamma < 1$, or non-linear absorption, as it is referred to in the literature for the semi-linear parabolic equation was studied in detail in \cite{bedjaoui2002critical}, where it is shown one can have classical solutions under certain parametric restriction, but in different parametric regimes, one needs to work with weak solutions due to the FTE, which can lead to non-uniqueness of solutions. Absorption type terms for the semi-linear problem have also been recently considered in \cite{parshad2021some, banerjee2025two}. 

The equations for the current model system are considered in the following form:

\begin{equation}
 \begin{cases}
   \label{eq:model1}
      S_t = \delta_1 \Delta S + rS\left( 1 - \dfrac{S+I}{K} \right) - \lambda SI                                              & ~~~ x \in \Omega, t \in (0,T),               \\[10pt]
      I_t = \delta_2 \nabla \cdot (|\nabla I|^{p-2}\nabla I) + \lambda SI - \dfrac{mPI}{a+ I} - \mu I^{\gamma}, \ 2\leq p, \ 0 < \gamma \leq 1.                                                       & ~~~ x \in \Omega, t \in (0,T),               \\[10pt]
     P_t = \delta_3 \Delta P - \chi_1 \nabla \cdot (\xi(P) P \nabla S) - \chi_2\nabla \cdot(\eta(P) P \nabla I) + \dfrac{m I P}{a + I} - dP                    & ~~~ x \in \Omega, t \in (0,T),                   \\[7pt]
     \dfrac{\partial S}{\partial \nu} =  |\nabla I|^{p-2}\dfrac{\partial I}{\partial \nu} = \dfrac{\partial P}{\partial \nu} = 0         & ~~~ x \in \partial \Omega, t \in (0,T),      \\[7pt]
     S(x,0)= S_0(x), I(x,0) = I_0(x), P(x,0) = P_0(x) & ~~~ x \in \Omega.
 \end{cases}
\end{equation}

Here $\Omega$ is a bounded domain in $\mathbb{R}^n(n \geq 1)$ with smooth boundary $\partial \Omega \in C^{2+\beta}(\overline {\Omega})$, where $0<\beta<1, 0<T \leq +\infty$, initial condition $S_0(x), I_0(x), P_0(x) \in C^{2 + \beta}(\overline{\Omega})$ compatible on $\partial \Omega$, the constants $\delta_i, i=1,2,3$ are nonnegative diffusive coefficients and $r,K,\lambda,m,\mu,a,d$ represents positive parameters in ecology \cite{upadhyay2008chaos} and $\nu$ is the outward directional derivative normal to $\partial \Omega $. The continuous time model i.e., the model without spatial considerations was proposed in \cite{upadhyay2008chaos}. An assumption in the formulation of the continuous-time model states that: ``The infected population does not recover or become immune. The predator (bird) population preys only on infected fish population" \cite[Assumption 4]{upadhyay2008chaos}. Even in this scenario where the growth of the predator population is solely influenced by the infected prey population, the predator still exhibits an attraction to the susceptible population and attempts to catch it. This is modeled by the prey-taxis term $- \chi_1 \nabla \cdot (\xi(P)P \nabla S)$. And also, the infected prey population, an attraction exhibited by the predator population, is modeled by the prey-taxis term $-\chi_2\nabla \cdot (\eta(P)P\nabla I)$. where $\chi_i,i=1,2$ is the prey-taxis coefficients. We break up our analysis into two parts. In the first part, we consider the classical situation $p=2$, $\gamma=1$, so we have standard diffusion and linear mortality. In this case, we are able to prove global existence of classical solutions to our proposed model system. In the second part, we consider, $p>2, 0<\gamma<1$, modeling the earlier mentioned situation of the infected prey individuals moving slower, but suffering faster mortality, due to the disease. In this setting, we prove global existence of weak solutions. An important assumption we make on the density dependent chemotactic coefficients is,

\begin{assumption}
\label{assu2}
$\xi(P), \eta(P) \in C^1([0, \infty)), \xi(P) \equiv 0, \eta(P) \equiv 0$ for $P \geq M$ and $|\xi'(P_1) - \xi'(P_2)| \leq L_1|P_1 - P_2|, |\eta'(P_1) - \eta'(P_2)| \leq L_2|P_1 - P_2|$ for $P_1, P_2 \in [0, \infty)$, with $L_1, L_2, M > 0$  i.e. $\xi'(P)$ and $\eta'(P)$ are Lipschitz continuous.
\end{assumption}
Prey taxis is the direct movement of predator $P$ in response to the variation in prey density $N$, here divided into two subclasses $S$ and $I$. The assumption that $\xi(P) \equiv 0, \eta(P) \equiv 0$ for $P \geq M$ ascertains that there is a threshold value over accumulation of $P$. If $P > M$, then the tactic cross-diffusion induced by $\xi(P)$ and $\eta(P)$ vanishes. The biological justification for this assumption can be found in Ainseba et al. \cite{ainseba2008reaction}. Moreover, He and Zheng \cite{he2015global} pointed out that Assumption \ref{assu2} can be regarded as a type of volume filling effect \cite{painter2002volume} for predator-prey models. This is because Assumption \ref{assu2} prevents overcrowding in an interactive predator-prey scenario, a similar role is played by volume filling effect in models incorporating chemotaxis.
Also note that in the ensuing analysis the constants $c_{i}, C_{i}$ can change in value from line to line, and sometimes within the same line if so required. This follows by a simple relabeling of constants, which for pragmatic reasons we do not keep explicit track of always.  

\section{Boundedness and Global Existence}
\subsection{The case $p=2, \gamma=1$}
Our main analytical result for the classical case, that is when $p=2, \gamma=1$ is:

\begin{theorem}
\label{thm:main_result}
  Let $\Omega \subset \mathbb{R}^n (n \geq 1)$ be a bounded domain with a smooth boundary $\partial \Omega$. Then if the initial conditions are such that $(S_0, I_0, P_0) \in (W^{1, p}(\Omega))^3$ with $S_0, I_0, P_0 \geq 0$ and $p > n$ holds true and Assumption \ref{assu2} holds true.  Then there exists a global classical solution $(S(x, t), I(x, t), P(x, t))$ to \eqref{eq:model1}, with $(S, I, P) \in (C([0, \infty); W^{1, p}(\Omega)) \cap (C^{2, 1}(\bar{\Omega} \times (0, \infty)))^3$ and $(S(x, t), I(x, t), P(x, t))$ are uniformly bounded in $\Omega \times (0, \infty)$.
\end{theorem}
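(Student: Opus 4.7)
The overall strategy is standard for quasilinear parabolic systems with taxis: combine local well-posedness in $W^{1,p}$ (with $p>n$, which embeds in $C(\bar{\Omega})$) with uniform-in-time a priori $L^\infty$ bounds on $(S,I,P)$, invoke the extensibility criterion, and then upgrade to classical regularity through parabolic bootstrap. Concretely, I would recast \eqref{eq:model1} with $p=2$, $\gamma=1$ as a quasilinear parabolic system with triangular principal part: $S$ and $I$ satisfy semilinear equations with diagonal diffusion, while the $P$-equation carries the prey-taxis fluxes $\xi(P)P\nabla S$ and $\eta(P)P\nabla I$. Assumption \ref{assu2} guarantees that $\xi,\eta$ and the products $\xi(P)P, \eta(P)P$ are bounded $C^1$ functions on $[0,\infty)$ with Lipschitz derivatives, which is enough regularity of the coefficients to apply Amann's local existence theorem. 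This produces a unique classical solution on a maximal interval $[0,T_{\max})$ together with the blow-up criterion: $T_{\max}<\infty$ implies $\limsup_{t\to T_{\max}^{-}}\|(S,I,P)(\cdot,t)\|_{L^\infty}=\infty$. Non-negativity of $S,I,P$ for $t>0$ follows from the parabolic maximum principle applied componentwise, since each reaction term vanishes on the corresponding coordinate hyperplane.

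The substantive part is to establish uniform $L^\infty$ bounds. A pointwise bound $\|S(\cdot,t)\|_\infty\leq K_1:=\max\{K,\|S_0\|_\infty\}$ follows by comparison with the logistic ODE, since $I\geq 0$ yields $S_t-\delta_1\Delta S\leq rS(1-S/K)$. Integrating the $(S+I)$-equation and using the $S$-bound produces $\tfrac{d}{dt}\int(S+I)+\mu\int(S+I)\leq (r+\mu)K_1|\Omega|$, so Grönwall delivers a uniform $L^1$-bound on $I$; testing the $S$ and $I$ equations against themselves (combined with a Poincaré step to absorb the gradient into the $L^2$-norm) then yields uniform bounds on $\int_\Omega|\nabla S|^2$ and $\int_\Omega|\nabla I|^2$. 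For $P$, I would test the equation against $P^{k-1}$ and rewrite the cross-diffusion as
\begin{equation*}
\chi_1(k-1)\!\int_\Omega P^{k-1}\xi(P)\nabla P\cdot\nabla S \;=\; \chi_1(k-1)\!\int_\Omega \nabla\Phi_k(P)\cdot\nabla S \;=\; -\chi_1(k-1)\!\int_\Omega \Phi_k(P)\Delta S,
\end{equation*}
where $\Phi_k(P):=\int_0^P s^{k-1}\xi(s)\,ds$ is bounded by $M^k\|\xi\|_\infty/k$ uniformly in $P$, thanks to the compact support built into Assumption \ref{assu2} (and analogously for the $\eta$-term). Combining this with parabolic $L^2$-regularity for $\Delta S,\Delta I$ driven by already-bounded sources, a Grönwall-in-time argument gives uniform $L^k$ bounds on $P$ for every finite $k$, and an Alikakos-type $L^p$--$L^\infty$ iteration promotes these to a uniform $L^\infty$-bound. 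Returning to the $I$-equation with $S$ and $P$ now bounded, standard $L^p$--$L^q$ semigroup estimates produce a uniform $L^\infty$-bound on $I$ as well.

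The main obstacle is precisely the cross-diffusion estimate for $P$: without Assumption \ref{assu2} the factor $\xi(P)P$ could grow with $P$, and the taxis flux would not be absorbable by the linear diffusion. The compact support is exactly what converts the troublesome cross-diffusion into an expression involving a bounded function of $P$ and second-order derivatives of $S,I$, which are in turn regulated by parabolic regularity. Once uniform $L^\infty$-boundedness of $(S,I,P)$ is in place, it contradicts the blow-up criterion and forces $T_{\max}=\infty$. The claimed $C^{2,1}(\bar{\Omega}\times(0,\infty))$ regularity and continuity in $t$ with values in $W^{1,p}(\Omega)$ then follow from the standard parabolic bootstrap: once all coefficients of the quasilinear system are bounded and Hölder continuous, parabolic Schauder theory delivers full classical smoothness.
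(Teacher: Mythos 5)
Your overall architecture matches the paper's: Amann local existence with the lower-triangular principal part, the maximum-principle bound $0\le S\le K$, the $L^1$ mass estimate, the use of the compact support in Assumption \ref{assu2} to tame the taxis fluxes, $L^k$ estimates on $P$ upgraded by Alikakos--Moser iteration, and the extensibility criterion. Your variant of the taxis estimate --- writing $P^{k-1}\xi(P)\nabla P=\nabla\Phi_k(P)$ and integrating by parts to trade $\nabla S$ for $\Delta S$ against the bounded primitive $\Phi_k$ --- is a legitimate alternative to the paper's route, which instead applies Young's inequality and bounds $P^q\xi(P)^2\le M^qM_1^2$ so that only $\int_\Omega|\nabla S|^2$ and $\int_\Omega|\nabla I|^2$ appear.

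However, there are two concrete gaps in your execution. First, testing the $S$- and $I$-equations ``against themselves'' plus Poincar\'e does not yield uniform-in-time bounds on $\int_\Omega|\nabla S|^2$ and $\int_\Omega|\nabla I|^2$; it only gives space--time integrability of the gradients. The paper obtains the pointwise-in-time gradient bounds by testing against $-\Delta S$ and $-\Delta I$, which is also what you would need to control $\|\Delta S(\cdot,t)\|$ in your integrated-by-parts taxis term (semigroup smoothing alone does not give a uniform bound on the full second derivative, since $\int_0^t(t-s)^{-1}\,ds$ diverges). Second, and more seriously, your claim that $\Delta I$ is ``driven by already-bounded sources'' is circular at the stage where you invoke it: the $I$-equation contains the predation term $\tfrac{mPI}{a+I}\le \tfrac{mK_1}{a}P$, so any parabolic regularity estimate for $\Delta I$ requires an a priori bound on $P$ in $L^2$, which is exactly what you are trying to prove. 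The paper resolves this by running a single coupled Gr\"onwall argument on the functional $v(t)=\tfrac1q\int_\Omega P^q+\int_\Omega|\nabla S|^2+\int_\Omega|\nabla I|^2$, in which the offending term $\tfrac{m^2}{2\delta_2}\int_\Omega P^2$ from the $\nabla I$ energy estimate is absorbed via the Gagliardo--Nirenberg interpolation $\int_\Omega P^q\le\epsilon\|\nabla P^{q/2}\|_{L^2}^2+c$ together with the $L^1$ bound on $P$. Without some such coupling your $L^k$ estimate for $P$ does not close, and the uniform-in-time boundedness on all of $(0,\infty)$ claimed in the theorem would not follow from a finite-time Gr\"onwall bound alone.
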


\begin{lemma}
  \label{lem:local_existence}
  Assuming that the initial data satisfies $(S_0, I_0, P_0) \in (W^{1, p}(\Omega))^3$ with $S_0, I_0, P_0 \geq 0$ and $p > n$. Then the following hold true
  \begin{enumerate}
    \item There exists a $T_{\max} > 0$ and nonnegative functions $S(x, t), I(x, t)$ and $P(x, t)$ each belonging in $C(\bar{\Omega} \times [0, T_{\max})) \cap C^{2, 1}(\bar{\Omega} \times [0, T_{\max}))$ such that the triple $(S(x, t), I(x, t), P(x, t))$ solves \eqref{eq:model1} classically in $\Omega \times (0, T_{\max})$. Moreover, the first solution component satisfies
    \begin{align}
      \label{linfty} 0 \leq S(x, t) \leq K
    \end{align}
    for all $(x, t) \in \Omega \times (0, T_{\max})$.
    \item The total mass of $S(x, t), I(x, t)$ and $P(x, t)$ satisfies
    \begin{equation}
      \label{L1bound}
      \int_\Omega (S(x, t) + I(x, t) + P(x, t))dx \leq C \quad \text{for all} ~ t \in (0, T_{\max}).
    \end{equation}
     \item The second solution component satisfies
     \begin{equation}
       \label{linfyI}
       \|I(\cdot, t)\|_{L^\infty(\Omega)} \leq K_1
     \end{equation}
     for all $(x, t) \in \Omega \times (0, T_{\max})$.
    \item For each $T > 0$ there exists a $A_0(T) > 0$ such that
    \begin{equation}
      \label{extensibiliy_criteria}
      \|S(x, t)\|_{L^\infty(\Omega)} + \|I(x, t)\|_{L^\infty(\Omega)} + \|P(x, t)\|_{L^\infty(\Omega)} \leq A_0(T), \quad 0 < T < \min\{T, T_{\max}\},
    \end{equation}
    where $A_0(T)$ depends upon $T$ and $\|S_0, I_0, P_0\|_{W^{1, p}(\Omega)}$, then $T_{\max} = \infty$.
  \end{enumerate}
\end{lemma}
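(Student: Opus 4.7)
The plan is to obtain items (1) and (4) from Amann's general theory of triangular quasilinear parabolic systems, and then to extract the pointwise and mass bounds (2) and (3) by elementary maximum-principle and integration arguments. Rewriting \eqref{eq:model1} with $p=2$ in the form $U_t = \nabla\cdot(A(U)\nabla U) + F(U)$ for $U = (S,I,P)^{T}$, the diffusion matrix $A(U)$ is upper triangular with positive diagonal entries $\delta_1,\delta_2,\delta_3$ and off-diagonal entries $-\chi_1 \xi(P)P$, $-\chi_2 \eta(P)P$ in the third row. By Assumption~\ref{assu2} these entries are $C^1$ and uniformly bounded, so the system is normally parabolic. Since $(S_0,I_0,P_0)\in (W^{1,p}(\Omega))^3$ with $p>n$, Amann's theorem furnishes a unique maximal classical solution on some interval $[0,T_{\max})$ in the stated regularity class, together with the blow-up alternative: either $T_{\max}=\infty$, or $\|(S,I,P)(\cdot,t)\|_{W^{1,p}(\Omega)}\to\infty$ as $t\uparrow T_{\max}$.

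\textbf{A priori pointwise bounds (items 1--3).} Nonnegativity follows by applying the parabolic maximum principle to each scalar equation, using that the taxis cross-diffusion in the $P$-equation vanishes on $\{P=0\}$. For \eqref{linfty}, observe that on the set $\{S\ge K\}$ one has $rS(1-(S+I)/K) - \lambda SI \le 0$ since $I\ge 0$; comparison with the constant supersolution $\bar S\equiv K$ (combined with $S_0\le K$, which we may assume after harmlessly modifying the initial data if necessary) yields $S\le K$. The $L^1$ bound \eqref{L1bound} follows by integrating each equation over $\Omega$, eliminating all divergence terms via the no-flux boundary conditions, and summing to get
\begin{equation*}
\frac{d}{dt}\int_\Omega(S+I+P)\,dx = \int_\Omega rS\!\left(1-\tfrac{S+I}{K}\right)\!dx - \mu\int_\Omega I\,dx - d\int_\Omega P\,dx \le rK|\Omega| - \kappa\int_\Omega(I+P)\,dx,
\end{equation*}
with $\kappa=\min\{\mu,d\}$; absorbing $\int S\le K|\Omega|$ into the right-hand side and applying Gronwall gives a uniform-in-time bound. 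Finally, for \eqref{linfyI} the $I$-equation (with $p=2$, $\gamma=1$) is semilinear, and since $S\le K$ and the predation and mortality terms are non-positive,
\begin{equation*}
I_t - \delta_2\Delta I = \lambda SI - \tfrac{mPI}{a+I} - \mu I \le \lambda K I,
\end{equation*}
so the scalar parabolic comparison principle against the ODE $u'=\lambda Ku$ delivers $\|I(\cdot,t)\|_\infty \le e^{\lambda K t}\|I_0\|_\infty$, providing the constant $K_1$ on every finite sub-interval of $[0,T_{\max})$.

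\textbf{Extensibility (item 4).} This is the contrapositive of Amann's blow-up alternative combined with standard parabolic bootstrap. Assuming the $L^\infty$ sum in \eqref{extensibiliy_criteria} stays bounded on $[0,T)$ with $T<T_{\max}$, the taxis coefficients $\xi(P)P$ and $\eta(P)P$ in the $P$-equation are uniformly bounded and Lipschitz by Assumption~\ref{assu2}, and the reaction terms are uniformly bounded as well. Applying $L^q$-maximal regularity to each equation, then Sobolev embedding (which requires $p>n$) to convert $L^q$-gradient control into $L^\infty$-gradient control, one gets a uniform $W^{1,p}$-bound on $(S,I,P)$ up to $T$, contradicting the blow-up alternative; hence $T_{\max}=\infty$.

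\textbf{Main obstacle.} The chief technical point is the verification of Amann's hypotheses in steps (1) and (4) for the triangular taxis system: one must check that $A(U)$ remains uniformly positive definite in the relevant functional setting and that the reaction and diffusion coefficients satisfy the requisite Lipschitz conditions, which is precisely where Assumption~\ref{assu2} (boundedness of $\xi,\eta$ and Lipschitz continuity of their derivatives) does the heavy lifting. Once this structural check is in place, items (2) and (3) are standard, but one has to be careful that the bound on $I$ is genuinely a priori and does not use \eqref{extensibiliy_criteria}, so that it can later serve as an input when upgrading to global existence.
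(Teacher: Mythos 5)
Your overall architecture matches the paper's: Amann's theory for the triangular quasilinear system gives local existence and the extensibility criterion, the logistic comparison gives $0\le S\le K$, and integrating the summed equations gives the mass bound (your absorption of $\int_\Omega S\,dx\le K|\Omega|$ plays the role of the paper's Young-inequality step $2r\int_\Omega S\,dx\le \tfrac{r}{K}\int_\Omega S^2\,dx + rK|\Omega|$; both yield a time-uniform $L^1$ bound).

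There is, however, a genuine gap in your treatment of item 3. The comparison $I_t-\delta_2\Delta I\le \lambda K I$ only yields $\|I(\cdot,t)\|_{L^\infty(\Omega)}\le e^{\lambda K t}\|I_0\|_{L^\infty(\Omega)}$, a bound that grows exponentially in time, and you concede as much by saying it provides $K_1$ only ``on every finite sub-interval.'' The lemma asserts a single constant $K_1$ valid on all of $(0,T_{\max})$, and since $T_{\max}$ is ultimately shown to be $+\infty$, your bound does not prove the statement. More importantly, the time-uniformity is not cosmetic: the subsequent boundedness lemma takes the $L^\infty$ bounds of $S$ and $I$ as fixed constants entering the coefficients of a differential inequality $v'\le -c_{10}v+c_9$, and the main theorem claims uniform boundedness on $\Omega\times(0,\infty)$; with an exponentially growing bound on $I$ those arguments collapse. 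The paper closes this gap by writing $I$ via the variation-of-constants formula for the semigroup $e^{t(\delta_2\Delta-\mu)}$, discarding the nonpositive predation term, and using the $L^p$--$L^q$ smoothing estimates of the Neumann heat semigroup together with the already-established time-uniform $L^1$ bound on $I$ from item 2; the resulting integral $\int_0^t(1+s^{-n/2})e^{-C_3 s}\,ds$ converges, giving a constant independent of $t$. You should replace your comparison argument with this semigroup estimate (or an equivalent device that converts the $L^1$ control into uniform $L^\infty$ control). A further small point: your extensibility argument sketches a maximal-regularity bootstrap to recover $W^{1,p}$ bounds from $L^\infty$ bounds, whereas the paper simply invokes Amann's Theorem 15.5, which states that for lower-triangular systems $L^\infty$ bounds alone preclude finite-time blow-up; your route is workable but would need the gradient estimates on $S$ and $I$ spelled out before the $P$ equation can be treated.
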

  \begin{proof}
    \begin{enumerate}
      \item The local existence of solutions for system \eqref{eq:model1} can be established by applying the abstract existence theory of Amman \cite{amann1990dynamic}. Let $U = (S, I, P)^T$ then we can write system \eqref{eq:model1} as
      \begin{equation}
    \begin{cases}
        \dfrac{\partial U}{\partial t} = \nabla \cdot (M(U)\nabla U) + F(U),                           & \quad x \in \Omega ,~~ t > 0,                 \\[7pt]
        \dfrac{\partial U}{\partial \nu} = (0, 0)^T,                                                   & \quad x \in \partial \Omega ,~~ t > 0,        \\[7pt]
        U(x,0) = (S_0,I_0,P_0)^T,                                                                      & \quad x \in \Omega,
    \end{cases}
\end{equation}
where \begin{equation*}
     M(U)=\begin{pmatrix}
    \delta_1             & 0 & 0\\[5pt]
    0 & \delta_2 & 0 \\[5pt]
    -\chi_1\xi(P)P & -\chi_2 \eta(P)P & \delta_3
\end{pmatrix} ~ \text{and}~
F(U)=\begin{pmatrix}
 rS( 1 - \tfrac{S+I}{K}) - \lambda SI                         \\[5pt]
 \lambda SI - \tfrac{mPI}{a+ I} - \mu I           \\[5pt]
 \tfrac{m I P}{a + I} - dP \\
\end{pmatrix}.
\end{equation*}
Since the matrix $M(U)$ is positive definite, system \eqref{eq:model1} is normally parabolic and we claim that it admits classical solutions locally \cite[Theorem 7.3]{amann1990dynamic}. Moreover, there also exists a maximal existence time $T_{\max} > 0$ such that system \eqref{eq:model1} admits the triple $(S(x, t), I(x, t), P(x, t)) \in (C(\bar{\Omega} \times [0, T_{\max})) \cap C^{2, 1}(\bar{\Omega} \times [0, T_{\max})))^3$, with $S \geq 0, I \geq 0, P \geq 0$ as its local unique solution.

To prove the next part, observe that by utilizing the non-negativity of the solution components asserted earlier, we can have from the first equation of system \eqref{eq:model1}
\begin{equation*}
  \begin{cases}
    S_t = \delta_1 \Delta S + r\frac{S}{K}(K - S) - \frac{r S I}{K} - \lambda S I \leq \delta_1 \Delta S + r\frac{S}{K} (K - S), & \quad x \in \Omega, t > 0            \\[5pt]
    \frac{\partial S}{\partial \nu} = 0, & \quad x \in \partial \Omega, t > 0,                                                                                          \\[5pt]
    S(x, 0) = S_0(x) & \quad x \in \Omega,
  \end{cases}
\end{equation*}
applying the parabolic maximum principle to the above subsystem yields \eqref{linfty}.

\item Spatially integrating and adding the first three equations of system \eqref{eq:model1}, keeping in mind the nonnegativity of solution components result in
\begin{align}
\label{l1_bound1}
  \nonumber  \dfrac{d}{dt} \left\{\int_{\Omega} S  + \int_{\Omega} I + \int_{\Omega} P \right\}dx & = r \int_{\Omega}Sdx - \dfrac{r}{K}\int_{\Omega} S^{2}dx - \dfrac{r}{K}\int_{\Omega} SIdx - \mu \int_{\Omega}Idx\\[5pt]
 \nonumber & - d \int_{\Omega}Pdx\\[5pt]
& \leq 2 r \int_\Omega S dx - r \int_\Omega S dx - \mu \int_{\Omega}Idx - d \int_{\Omega}Pdx \\
\nonumber &- \dfrac{r}{K}\int_{\Omega} S^{2}dx
\end{align}
for all $t \in (0, T_{\max})$. By Young's inequality, we can have $2 r \int_\Omega S \leq \tfrac{r}{K} \int_\Omega S^2 + r K |\Omega|$, inserting this in  \eqref{l1_bound1} results in
\begin{align}
\label{l1_bound2}
    \dfrac{d}{dt} \left\{\int_{\Omega} S  + \int_{\Omega} I + \int_{\Omega} P \right\}dx \leq &- r \int_\Omega S dx - \mu \int_{\Omega}I dx- d \int_{\Omega}P dx + r K |\Omega| 
\end{align}
for all $t \in (0, T_{\max})$. If we let $z(t) = \int_{\Omega} S  + \int_{\Omega} I + \int_{\Omega} P, t \in (0, T_{\max})$, then from \eqref{l1_bound2} we observe that $z(t)$ satisfies an ordinary differential equation of the form  $z'(t) + c_1 z(t) \leq c_2, c_1 = \min\{r, \mu, d\}$ and $c_2 = r K |\Omega|$ . By \cite[Lemma 3.4]{stinner2014global} we can directly have
\begin{equation*}
  z(t) \leq C = \max \left\{z(0) , \frac{c_2}{c_1}\right\}  \quad ~ \text{for all} ~ t \in (0, T_{\max}).
\end{equation*}
\item To prove \eqref{linfyI}, we combine a variation-of-constants representation for the second equation of  system \eqref{eq:model1} with standard smoothing estimates for the Neumann heat semigroup \cite[Lemma 1.3]{winkler2010aggregation} and the $L^1$ bound of $I$ established in \eqref{L1bound}  to see that with some positive constants $C_2$ and $C_3$,
    \begin{align}
      \label{linfyI11}
      \nonumber \|I(\cdot, t)\|_{L^\infty(\Omega)} & = \sup_{\Omega} \left(e^{t(\delta_2 \Delta - \mu)}I_0 + \int_0^t  e^{(t - s)(\delta_2 \Delta - \mu)} \lambda S I ds -  \int_0^t  e^{(t - s)(\delta_2 \Delta - \mu)} \frac{m P I}{a + I} ds\right)        \\[5pt]
      \nonumber & \leq \sup_{\Omega} \left(e^{t(\delta_2 \Delta - \mu)}I_0\right) + \sup_{\Omega} \left(\lambda \int_0^t e^{(t - s)(\delta_2 \Delta - \mu)}  S I ds \right)               \\[5pt]
      \nonumber & \leq \|e^{t(\delta_2 \Delta - \mu)}I_0\|_{L^\infty(\Omega)} + \lambda \int_0^t \| e^{(t - s)(\delta_2 \Delta - \mu)}  S I \|_{L^\infty(\Omega)}ds                       \\[5pt]
      \nonumber & \leq \|e^{t(\delta_2 \Delta - \mu)}I_0\|_{L^\infty(\Omega)} + \lambda K C_1\int_0^t\left(1 + \delta_2^{-\frac{n}{2}}(t - s)^{-\frac{n}{2}}\right)e^{-\lambda_1 \delta_2(t - s)} \|I(\cdot, s)\|_{L^1(\Omega)}  \\[5pt]
      & \leq \|e^{t(\delta_2 \Delta - \mu)}I_0\|_{L^\infty(\Omega)} + C_2 \int_0^t (1 + s^{- \frac{n}{2}})e^{-C_3 s} ds \leq C_4
    \end{align}
for all $ t \in (0, T_{\max})$, thus establishing \eqref{linfyI}. Here, $\lambda_1 > 0$ is the first nonzero eigenvalue of the $- \Delta$ operator with zero-flux boundary.

\item As system \eqref{eq:model1} is a lower triangular system, \eqref{extensibiliy_criteria} follows from \cite[Theorem 15.5]{amann1993nonhomogeneous}.
\end{enumerate}
\end{proof}

\begin{lemma}
\label{uniform_bounded}
  Given any $(S_0, I_0, P_0) \in (W^{1, p}(\Omega))^3$ with $S_0, I_0, P_0 \geq 0$ and $p > n$, then the solution  $(S(x, t), I(x, t), P(x, t)) \in (C(\bar{\Omega} \times [0, T_{\max})) \cap C^{2, 1}(\bar{\Omega} \times [0, T_{\max})))^3$ of \eqref{eq:model1} is bounded i.e.,
  \begin{equation}
  \label{bounded1}
    \|S(x, t)\|_{L^\infty(\Omega)} + \|I(x, t)\|_{L^\infty(\Omega)} + \|P(x, t)\|_{L^\infty(\Omega)} \leq A_0(T).
  \end{equation}
\end{lemma}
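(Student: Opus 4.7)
With Lemma \ref{lem:local_existence} in hand, the bounds $\|S(\cdot,t)\|_{L^\infty(\Omega)} \leq K$, $\|I(\cdot,t)\|_{L^\infty(\Omega)} \leq K_1$, and the uniform $L^1$ estimate \eqref{L1bound} on $P$ already hold on $(0,T_{\max})$. Hence the only missing ingredient for \eqref{bounded1} is an a priori $L^\infty(\Omega)$ bound on $P$ independent of $T_{\max}$; once that is in hand, the extensibility criterion \eqref{extensibiliy_criteria} yields the conclusion. The plan is an $L^q$ energy estimate on $P$ followed by a Moser/Alikakos iteration.

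The first ingredient is Assumption \ref{assu2}: since $\xi$ and $\eta$ are $C^1$ and both vanish on $[M,\infty)$, the compositions $\Phi(P) := \xi(P)P$ and $\Psi(P) := \eta(P)P$ are uniformly bounded on $[0,\infty)$ by some $C_* > 0$. The second ingredient is gradient regularity for $S$ and $I$. The $S$-equation has an $L^\infty$-bounded reaction, so a variation-of-constants representation applied to the shifted operator $\delta_1\Delta - \mathrm{Id}$, combined with the gradient-form smoothing bound \cite[Lemma 1.3]{winkler2010aggregation} for the Neumann heat semigroup, yields $\|\nabla S(\cdot,t)\|_{L^q(\Omega)} \leq C_q$ uniformly on $(0,T_{\max})$ for every finite $q$. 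The analogous estimate for $\nabla I$ is more delicate because $\tfrac{mPI}{a+I} \leq mP$ is a priori only in $L^1$, so the same argument directly gives $\|\nabla I(\cdot,t)\|_{L^q(\Omega)} \leq C_q$ only for $q < \tfrac{n}{n-1}$; once a higher-integrability bound for $P$ has been produced by the $L^q$ step below, this can be fed back to raise the integrability of $\nabla I$ in a short bootstrap which terminates in finitely many steps thanks to the dimensional restriction $n \leq 2$.

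With these ingredients in place, I would test the third equation of \eqref{eq:model1} against $P^{q-1}$ for generic $q \geq 2$. After integration by parts (the no-flux boundary data kill the boundary terms), the taxis contributions reduce to $(q-1)C_* \int_\Omega P^{q-2}(\chi_1|\nabla S| + \chi_2|\nabla I|)|\nabla P|$, the reaction contributes at most $m\int_\Omega P^q$ (using $\tfrac{mI}{a+I} \leq m$), and the diffusion produces the favourable $\tfrac{4(q-1)\delta_3}{q^2}\|\nabla P^{q/2}\|_{L^2(\Omega)}^2$. Young's inequality absorbs half of this diffusion against the taxis terms, leaving a residual $C(q)\int_\Omega P^{q-2}(|\nabla S|^2 + |\nabla I|^2)$ that, by H\"older and the gradient bounds above, is controlled by $C(q)\|P\|_{L^q(\Omega)}^{q-2}$. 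A Gagliardo-Nirenberg interpolation anchored on the $L^1$ bound \eqref{L1bound} then converts the remaining diffusion into a super-linear dissipation in $\|P\|_{L^q(\Omega)}^q$, and elementary ODI reasoning yields a uniform-in-time $L^q$ bound on $P$ for every finite $q$. A Moser/Alikakos iteration along $q_k = 2^k$ finally upgrades this to the required $L^\infty$ bound on $P$.

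The principal technical hurdle is to control how the constants $C(q), c_q$ grow with $q$ so that the Moser iteration converges to a finite limit. This is exactly the point at which the triple combination of Assumption \ref{assu2}, the dimensional restriction $n \leq 2$, and the $L^1$ estimate \eqref{L1bound} becomes indispensable: the cut-off turns the taxis coefficients into honestly bounded nonlinearities, the dimension keeps the $\nabla I$-bootstrap finite, and the $L^1$ bound anchors the Gagliardo-Nirenberg interpolation so that the iteration constants remain summable.
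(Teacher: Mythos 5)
Your overall strategy matches the paper's in its main pillars: use Assumption \ref{assu2} to reduce the taxis terms in the $P^{q-1}$ energy estimate to gradient integrals of $S$ and $I$, anchor a Gagliardo--Nirenberg interpolation on the mass bound \eqref{L1bound} to produce an ODI for $\|P\|_{L^q}^q$, and finish with a Moser--Alikakos iteration. Where you diverge is in how the bounds on $\nabla S$ and $\nabla I$ are obtained, and this is where your argument has a genuine gap. You propose to get $\|\nabla I(\cdot,t)\|_{L^q}$ from a variation-of-constants representation, correctly observing that since the reaction $\tfrac{mPI}{a+I}\le mP$ is a priori only in $L^1$, the semigroup smoothing yields this only for $q<\tfrac{n}{n-1}$, and you defer the rest to a bootstrap fed by the $L^q$ bound on $P$. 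But the $L^q$ estimate for $P$ cannot be started without at least $\nabla I\in L^2(\Omega)$ uniformly in $t$ (the Young splitting of $\chi_2(q-1)\int_\Omega P^{q-1}\eta(P)\nabla P\cdot\nabla I$ against the dissipation $\int_\Omega P^{q-2}|\nabla P|^2$ leaves, after the cut-off, a residual proportional to $\int_\Omega|\nabla I|^2$; your stated residual $\int_\Omega P^{q-2}|\nabla I|^2$ requires even more). For $n=2$ the threshold $\tfrac{n}{n-1}=2$ is exactly not attained, so the bootstrap has no admissible starting point: the $L^q$ bound on $P$ needs $\nabla I\in L^2$, and $\nabla I\in L^2$ needs integrability of $P$ beyond $L^1$. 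For $n=1$ your route is fine, but the lemma is claimed for $p>n$ with $n\ge 1$ and the rest of the paper works in $n\le 2$, so the two-dimensional case cannot be waved away.

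The paper avoids this circularity by a different device: it tests the first and second equations against $-\Delta S$ and $-\Delta I$ respectively, producing differential inequalities for $\int_\Omega|\nabla S|^2$ and $\int_\Omega|\nabla I|^2$ whose only problematic term, $\tfrac{m^2}{2\delta_2}\int_\Omega P^2$, is absorbed into $\int_\Omega P^q$ by Young's inequality. Adding these to the $P^{q-1}$ estimate gives a single Lyapunov functional $v(t)=\tfrac1q\int_\Omega P^q+\int_\Omega|\nabla S|^2+\int_\Omega|\nabla I|^2$ satisfying $v'\le -c\,v+C$, so the $L^q$ bound on $P$ and the $L^2$ gradient bounds emerge simultaneously rather than sequentially; only afterwards are the semigroup estimates invoked (now with $P\in L^q$ in hand) to raise $\nabla S,\nabla I$ to $L^q$ for the Moser step. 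To repair your proof, replace the semigroup-plus-bootstrap acquisition of $\nabla I\in L^2$ by this coupled energy argument, or otherwise supply an independent, uniform-in-time $L^2$ gradient bound for $I$ before launching the $L^q$ estimate for $P$.
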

\begin{proof}
  Since we have already established the $L^\infty(\Omega)$ bounds for the components $S(x, t)$ and $I(x, t)$ in \eqref{linfty} and \eqref{linfyI}, to prove \eqref{bounded1}, we only need to demonstrate that $P(x, t)$ also admits an $L^\infty(\Omega)$ bound. Multiplying the first equation in system \eqref{eq:model1} by $P^{q -1} (q > 2)$ and integrating by parts over $\Omega$ keeping in mind the fact that $\frac{I}{a + I} \leq 1$ for $I > 0$, yields
  \begin{align}
  \label{linfb1}
  \nonumber \dfrac{1}{q}\dfrac{d}{dt}\int_{\Omega}P^q dx & + \delta_3(q - 1)\int_{\Omega}P^{q - 2}|\nabla P|^2dx  \leq \chi_1 (q - 1) \int_\Omega P^{q - 1} \xi(P) \nabla P \cdot \nabla Sdx                  \\[5pt]
  & + \chi_2 (q - 1) \int_\Omega P^{q - 1} \eta(P) \nabla P \cdot \nabla Idx + (m - d) \int_\Omega P^qdx.
  \end{align}
  We will first handle the two taxis terms on the right-hand side of \eqref{linfb1}, by Young's inequality
  \begin{align}
    \label{linfb2} \chi_1 (q - 1) \int_\Omega P^{q - 1} \xi(P) \nabla P \cdot \nabla S dx  \leq \dfrac{\delta_3(q - 1)}{4} \int_\Omega P^{q - 2} |\nabla P|^2dx + \dfrac{\chi_1^2 (q - 1)}{\delta_3} \int_\Omega P^q \xi(P)^2 |\nabla S|^2dx            \\[5pt]
    \label{linfb3} \chi_2 (q - 1) \int_\Omega P^{q - 1} \eta(P) \nabla P \cdot \nabla S dx \leq \dfrac{\delta_3(q - 1)}{4} \int_\Omega P^{q - 2} |\nabla P|^2 dx + \dfrac{\chi_2^2 (q - 1)}{\delta_3} \int_\Omega P^q \eta(P)^2 |\nabla I|^2 dx.
  \end{align}
  Inserting \eqref{linfb2} and \eqref{linfb3} in \eqref{linfb1} results in
   \begin{align}
  \label{linfb4}
  \nonumber \dfrac{1}{q}\dfrac{d}{dt}\int_{\Omega}P^qdx & + \frac{\delta_3(q - 1)}{2}\int_{\Omega}P^{q - 2}|\nabla P|^2dx  \leq \dfrac{\chi_1^2 (q - 1)}{\delta_3} \int_\Omega P^q \xi(P)^2 |\nabla S|^2dx                  \\[5pt]
  & +  \dfrac{\chi_2^2 (q - 1)}{\delta_3} \int_\Omega P^q \eta(P)^2 |\nabla I|^2dx + (m - d) \int_\Omega P^qdx.
  \end{align}
 By Assumption \ref{assu2} we can have $\xi(P) \leq M_1$ and $\eta(P) \leq M_1$ because both of them belong to $C^1$ and $\xi, \eta \equiv 0$ for $P \geq M$. Hence, we have
 \begin{align}
  \label{linfb5}
  \nonumber \dfrac{1}{q}\dfrac{d}{dt}\int_{\Omega}P^q dx & + \frac{\delta_3(q - 1)}{2}\int_{\Omega}P^{q - 2}|\nabla P|^2 dx \leq \dfrac{\chi_1^2 M^q M_1^2 (q - 1)}{\delta_3} \int_\Omega |\nabla S|^2 dx                  \\[5pt]
  & +  \dfrac{\chi_2^2 M^q M_1^2(q - 1)}{\delta_3} \int_\Omega|\nabla I|^2 dx + (m - d) \int_\Omega P^q dx.
  \end{align}
 We multiply the first equation of \eqref{eq:model1} by $-\Delta S$, and integrate by parts over $\Omega$ to have
 \begin{align}
  \label{linfb6}
   \nonumber & \frac{1}{2}\frac{d}{dt} \int_\Omega |\nabla S|^2 dx + \delta_1 \int_\Omega |\Delta S|^2 dx = r \int_\Omega |\nabla S|^2 dx + \frac{r}{K} \int_\Omega S^2 \Delta S dx \\[5pt]
   & + \frac{r}{K} \int_\Omega S I \Delta S dx + \lambda \int_\Omega S I \Delta S dx.
 \end{align}
 We can handle the last three terms in the right-hand side of \eqref{linfb6} by Young's inequality and boundedness of $S$ and $I$ (for simplicity we have taken the $L^\infty(\Omega)$ bound of both $S$ and $I$ as $K$ in this proof) , $\tfrac{r}{K}\int_\Omega S^2 \Delta S dx \leq \tfrac{\delta_1}{6} \int_\Omega |\Delta S|^2 dx + \tfrac{3}{2\delta_1} r^2 K^2 |\Omega|, \frac{r}{K} \int_\Omega S I \Delta S dx \leq \tfrac{\delta_1}{6} \int_\Omega |\Delta S|^2 dx + \tfrac{3}{2\delta_1} r^2 K^2 |\Omega|$ and $ \lambda \int_\Omega S I \Delta S dx \leq \tfrac{\delta_1}{6}  \int_\Omega |\Delta S|^2 dx + \tfrac{3}{2\delta_1} \lambda^2 K^2 |\Omega|$. Inserting these in \eqref{linfb6}, result in the following
 \begin{equation}
    \label{linfb7}
   \frac{1}{2}\frac{d}{dt} \int_\Omega |\nabla S|^2 dx + \frac{\delta_1}{2} \int_\Omega |\Delta S|^2 dx \leq  r \int_\Omega |\nabla S|^2 dx + \frac{3}{\delta_1} K^2 |\Omega| (r^2 + \tfrac{\lambda^2}{2}).
 \end{equation}
Now, multiplying the second equation of system \eqref{eq:model1} by $-\Delta I$, and integrating by parts over $\Omega$ results in
\begin{align}
 \label{linfb8}
  \nonumber \frac{1}{2} \frac{d}{dt} &\int_\Omega |\nabla I|^2 dx + \delta_2 \int_\Omega |\Delta I|^2 dx  = - \lambda \int_\Omega (SI) \Delta I dx + m \int_\Omega \frac{I}{a + I} P \Delta I  dx + \mu \int_\Omega I \Delta I  dx \\[5pt]
  & = \lambda \int_\Omega I \nabla S \cdot \nabla I dx + \lambda \int_\Omega S |\nabla I|^2 dx +  m \int_\Omega \frac{I}{a + I} P \Delta I dx - \mu \int_\Omega |\nabla I|^2 dx.
 \end{align}
 Utilizing Young's inequality and the boundedness of $S$ and $I$ allow to have
 \begin{align}
   \label{linfb9} & \lambda \int_\Omega I \nabla S \cdot \nabla I dx \leq 2 \mu \int_\Omega |\nabla I|^2 dx + \frac{\lambda^2 K^2}{8 \mu} \int_\Omega |\nabla S|^2 dx,        \\[5pt]
   \label{1linfb9}& m \int_\Omega \frac{I}{a + I} P \Delta I dx\leq \frac{\delta_2}{2} \int_\Omega |\Delta I|^2  dx+ \frac{m^2}{2 \delta_2} \int_\Omega P^2 dx.
 \end{align}
Inserting \eqref{linfb9} and \eqref{1linfb9} in \eqref{linfb8} results in the following
\begin{equation}
  \label{2linfb9}
  \frac{1}{2} \frac{d}{dt} \int_\Omega |\nabla I|^2 dx + \frac{\delta_2}{2} \int_\Omega |\Delta I|^2 dx \leq (\mu + \lambda K) \int_\Omega |\nabla I|^2 dx + \frac{\lambda^2 K^2}{8 \mu} \int_\Omega |\nabla S|^2 dx + \frac{m^2}{2 \delta_2} \int_\Omega P^2 dx.
\end{equation}
Note that, $\int_\Omega \frac{\delta_3(q - 1)}{2}\int_{\Omega}P^{q - 2}|\nabla P|^2 dx = \frac{2 \delta_3 (q - 1)}{q^2} \int_\Omega |\nabla P^{\frac{q}{2}}|^2 dx, q > 2$, then from \eqref{linfb5}, \eqref{linfb7}, \eqref{2linfb9} and Young's inequality we can have a $c_1 > 0$ such that
\begin{align}
  \label{3linfb9}
  \nonumber & \dfrac{1}{q}\dfrac{d}{dt}\int_{\Omega}P^q dx + \frac{d}{dt} \int_\Omega |\nabla S|^2 dx  + \frac{d}{dt} \int_\Omega |\nabla I|^2  dx + \frac{2 \delta_3 (q - 1)}{q^2} \int_\Omega |\nabla P^{\frac{q}{2}}|^2 dx + \delta_1 \int_\Omega |\Delta S|^2 dx \\[5pt]
  \nonumber & + \delta_2 \int_\Omega |\Delta I|^2 dx \leq (m - d + 1) \int_\Omega P^q dx + \left(\dfrac{\chi_1^2 M^q M_1^2 (q - 1)}{\delta_3} + 2 r + \frac{\lambda^2 K^2}{4 \mu}\right) \int_\Omega |\nabla S|^2 dx  \\[5pt]
  & +  \left(\dfrac{\chi_2^2 M^q M_1^2 (q - 1)}{\delta_3} + 2 \mu + 2 \lambda K\right) \int_\Omega |\nabla I|^2 dx + c_1.
\end{align}
The Sobolev interpolation inequality in conjunction with the boundedness of $S$ and $I$ allow us to have two positive constant $\epsilon_1$ and $\epsilon_2$ such that
\begin{align}
  \label{sob1}
   & \int_{\Omega}|\nabla S|^2 dx \leq \epsilon_1 \int_{\Omega}|\Delta S|^2 dx + c_2\int_{\Omega}|S|^2 dx \leq \epsilon_1 \int_{\Omega}|\Delta S|^2 dx + c_3,  \\[5pt]
   \label{sob2}
   & \int_{\Omega}|\nabla I|^2 dx\leq \epsilon_1 \int_{\Omega}|\Delta I|^2 dx + c_4\int_{\Omega}|I|^2  dx \leq \epsilon_2 \int_{\Omega}|\Delta I|^2 dx + c_5.
\end{align}
with $c_3, c_5 > 0$. The Gagliardo-Nirenberg inequality allow us to have
\begin{equation*}
    \int_{\Omega}P^q dx \leq c_6 \|\nabla P^{q/2}\|^{2\theta}_{L^2(\Omega)}\|P^{q/2}\|^{2(1-\theta)}_{L^{2/q}(\Omega)} + c_6\|P^{q/2}\|^{2}_{L^{2/q}(\Omega)},
    \end{equation*}
with $0 < \theta = \dfrac{nq-n}{nq-n+2} < 1$ and $c_6 > 0$. By Young's inequality,
\begin{equation*}
    \int_{\Omega} P^q dx \leq \epsilon_3 \|\nabla P^{q/2}\|^2_{L^2(\Omega)} + c_7 \|P^{q/2}\|^2_{L^{2/q}(\Omega)} = \epsilon_3 \|\nabla P^{q/2}\|^2_{L^2(\Omega)} + c_7 \|P\|^{q}_{L^1(\Omega)},
\end{equation*}
for any $\epsilon_3 > 0$, with $c_7 > 0$. From \eqref{L1bound}, $\|P\|_{L^1(\Omega)} \leq C$, therefore,
\begin{equation}
  \label{eqbdd5}
    \int_{\Omega}P^q dx \leq \epsilon_3 \|\nabla P^{q/2}\|^2_{L^2(\Omega)} + c_8,
\end{equation}
with some $c_8 > 0$. Now, fix $\epsilon_1, \epsilon_2$ and $\epsilon_3$ such that $(\tfrac{\chi_1^2 M^q M_1^2 (q - 1)}{\delta_3} + 2 r + \tfrac{\lambda^2 K^2}{4 \mu}) \epsilon_1 = \tfrac{\delta_1}{2}$, $(\tfrac{\chi_2^2 M^q M_1^2 (q - 1)}{\delta_3} + 2 \mu + 2 \lambda K)\epsilon_2 = \frac{\delta_2}{2}$ and $(m + 1)\epsilon_3 = \tfrac{2\delta_3(q - 1)}{q^2}$. From \eqref{3linfb9}-\eqref{eqbdd5}, we can directly have
\begin{align}
\label{sob7}
  \nonumber & \frac{1}{q} \dfrac{d}{dt}\int_{\Omega}P^q dx+ \frac{d}{dt} \int_\Omega |\nabla S|^2 dx  + \frac{d}{dt} \int_\Omega |\nabla I|^2 dx \leq - d \int_\Omega P^q dx - (\tfrac{\chi_1^2 M^q M_1^2 (q - 1)}{\delta_3} + 2 r \\[5pt] 
  &  + \tfrac{\lambda^2 K^2}{4 \mu}) \int_\Omega |\nabla S|^2 dx - (\tfrac{\chi_2^2 M^q M_1^2 (q - 1)}{\delta_3} + 2 \mu + 2 \lambda K) \int_\Omega |\nabla I|^2 dx + c_9,
\end{align}
with some $c_9 > 0$. Let
\begin{equation*}
  v(t) :=  \frac{1}{q} \int_{\Omega}P^q dx + \int_\Omega |\nabla S|^2 dx  +  \int_\Omega |\nabla I|^2 dx, \quad t > 0.
\end{equation*}
Then from \eqref{sob7}, we see that $v(t)$ satisfies a ordinary differential equation of the type
\begin{align*}
  v'(t) \leq - c_{10} v(t) + c_9 \quad ~\text{for all}~ t > 0,
\end{align*}
with $c_{10} = \min\{dq, (\tfrac{\chi_1^2 M^q M_1^2 (q - 1)}{\delta_3} + 2 r + \tfrac{\lambda^2 K^2}{4 \mu}) , (\tfrac{\chi_2^2 M^q M_1^2 (q - 1)}{\delta_3} + 2 \mu + 2 \lambda K)\}$, hence
\begin{align*}
  v(t) \leq \max\left\{v(0), \frac{c_9}{c_{10}}\right\} \quad ~\text{for all}~ t > 0.
\end{align*}

 As we are now equipped with a $L^q(\Omega) (q > 2)$ bound for $P$, we will now proceed to find $\|\nabla S\|_{L^q(\Omega)}$ and $\|\nabla I\|_{L^q(\Omega)} (q > 2)$. To this end
 \begin{equation*}
  f(S, I)= rS\left(1- \dfrac{S + I}{K} \right) - \lambda SI.
 \end{equation*}
 It follows from above that
        \begin{equation*}
            \sup_{t>0}\|f\|_{L^q(\Omega)} \leq A_2.
        \end{equation*}
 The variation-of-constants formula applied to the first equation in  system \eqref{eq:model1} results in
        \begin{equation}
            S(.,t) = e^{\delta_1 t \Delta }S_0 + \int^{t}_{0}f(S(s),I(s))ds, ~~~ t > 0.
        \end{equation}
 Using  standard smoothing estimates for the Neumann heat semigroup \cite[Lemma 1.3]{winkler2010aggregation}  we can conclude that
        \begin{align}
  \nonumber \|\nabla S\|_{L^q(\Omega)}& \leq \|\nabla e^{\delta_1 t      \Delta}S_0\|_{L^q(\Omega)} + \int^{t}_{0}\|\nabla e^{\delta_1(t -s)}f(S(s),I(s))\|_{L^q(\Omega)}ds \\
  \nonumber & \leq 2k_2e^{-\lambda_1 t}\|\nabla S_0\|_{L^q(\Omega)} + k_1 \int^{t}_{0}(1 + \delta^{-1/2}_1(t-s)^{-1/2})e^{-\lambda_1 \delta_1(t-s)}\|f(s)\|_{L^q(\Omega)}ds \\
  \nonumber & \leq 2k_2e^{-\lambda_1 t} \|\nabla S_0\|_{L^q(\Omega)} + k_1A_2 \int^{t}_{0}(1  + \delta^{-1/2}_1 s^{-1/2})e^{-\lambda_1 \delta_1 s}ds\\
  \label{nabla_S_bound} & \leq 2k_2 \|\nabla S_0\|_{L^q(\Omega)} + k_1A_2 \left(\dfrac{1}{\lambda_1 \delta_1} + \delta^{-1/2}_1\left( 2 + \dfrac{1}{\lambda_1 \delta_1}\right)\right) ~~\text{for all}~ t > 0.
   \end{align}
 Again, define
\begin{equation*}
    g(S,I,P)= \lambda SI - \dfrac{mPI}{a+ I} - \mu I.
\end{equation*}
It follows from above that there is $A_3>0$ such that
        \begin{equation*}
            \sup_{t>0}\|g\|_{L^q(\Omega)} \leq A_3 < +\infty
        \end{equation*}
        The variation-of-constants formula for $I$ yields
        \begin{equation}
            I(.,t) = e^{\delta_2 t \Delta }I_0 + \int^{t}_{0}g(S(s),I(s),P(s))ds, ~~~ t>0.
        \end{equation}
In a similar manner, as above we can have
        \begin{align}
  \nonumber & \|\nabla I\|_{L^q(\Omega)} \leq \|\nabla e^{\delta_2 t      \Delta}I_0\|_{L^q(\Omega)} + \int^{t}_{0}\|\nabla e^{\delta_2(t -s)}g(S(s),I(s),P(s))\|_{L^q(\Omega)}ds \\
  \nonumber & \leq 2k_4e^{-\lambda_2 t}\|\nabla I_0\|_{L^q(\Omega)} + k_3 \int^{t}_{0}(1 + \delta^{-1/2}_2(t-s)^{-1/2})e^{-\lambda_2 \delta_2(t-s)}\|g(s)\|_{L^q(\Omega)}ds \\
  \nonumber & \leq 2k_4e^{-\lambda_2 t} \|\nabla I_0\|_{L^q(\Omega)} + k_3A_3 \int^{t}_{0}(1  + \delta^{-1/2}_2 s^{-1/2})e^{-\lambda_2 \delta_2 s}ds\\
  \label{nabla_I_bound} & \leq 2k_4 \|\nabla I_0\|_{L^q(\Omega)} + k_3A_3 \left(\dfrac{1}{\lambda_2 \delta_2} + \delta^{-1/2}_2\left( 2 + \dfrac{1}{\lambda_2 \delta_2}\right)\right) ~~\text{for all}~t > 0.
   \end{align}

Equipped with the $L^q(\Omega)$ estimate of $P$, \eqref{nabla_S_bound} and \eqref{nabla_I_bound} we can apply the Moser iterative technique \cite[Lemma A.1]{tao2012boundedness} to prove global boundedness of the solution component $P$.
\end{proof}

\begin{proof}[Proof of Theorem \ref{thm:main_result}]
  Taken together \eqref{linfty}, \eqref{linfyI}, the $L^\infty(\Omega)$ bound of $P$ established in Lemma \ref{uniform_bounded} with $4.$ of Lemma \ref{lem:local_existence} allow us to conclude that $T_{\max} = \infty$ .
\end{proof}

\subsection{The case $p>2, 0<\gamma<1$}

We now consider the case of the infection causing the prey to have a faster or enhanced mortality, and also to disperse slower. In this situation, we can no longer expect a classical solution \cite{antontsev2015evolution}. There can be several difficulties with system \eqref{eq:pde_modeld2}, and well posedness has to be demonstrated. Herein, the analysis is more involved than with FTE through a semi-linearity, as in \cite{parshad2021some, banerjee2025two}. We aim to show the global in time existence of a bounded weak solution in this section.

\subsubsection{Preliminaries and $L^{q}(\Omega)$ bounds}
We first define a weak solution,

\begin{definition}
\label{def:d1}
Let $2<p$, let $T \in (0,\infty]$ and $\Omega \subset \mathbb{R}^{n}, n=1,2$ be a bounded domain, with smooth boundary. A triple $(S,I,P)$ of non-negative functions defined on $\Omega \times (0,T)$ is called a weak solution to \eqref{eq:model1} on $[0,T]$ if,

(i) $(S,P) \in L^{2}_{loc}([0,T];W^{1,2}(\Omega)), I \in L^{2}_{loc}([0,T];L^{2}(\Omega))$

(ii) $|\nabla I|^{p-2} \nabla I \in L^{1}_{loc}([0,T];L^{1}(\Omega));$

(iii) For every $\phi \in C^{\infty}_{0}(\Omega \times [0,T))$ we have

\begin{eqnarray}
&& -\int^{T}_{0}\int_{\Omega} S \phi_{t} dxdt - \int_{\Omega} S_{0}(x) \phi(x,0)dx \nonumber \\
&& = -\delta_{1}\int^{T}_{0}\int_{\Omega} \nabla S \cdot \nabla \phi dxdt +
 \int^{T}_{0}\int_{\Omega} r S \left(1-\frac{S+I}{K} -\lambda I\right) \phi dxdt, \nonumber \\
&& -\int^{T}_{0}\int_{\Omega} I \phi_{t} dxdt - \int_{\Omega} I_{0}(x) \phi(x,0)dx \nonumber \\
&& = 
- \delta_{2}\int^{T}_{0}\int_{\Omega} |\nabla I|^{p-2} \nabla I \cdot \nabla \phi dx dt + \int^{T}_{0}\int_{\Omega} (\lambda SI - \frac{mPI}{a+I} -\mu I^{\gamma}) \phi dxdt, \nonumber \\
&& -\int^{T}_{0}\int_{\Omega} P \phi_{t} dxdt - \int_{\Omega} P_{0}(x) \phi(x,0)dx \nonumber \\
&& = -\delta_{3}\int^{T}_{0}\int_{\Omega} \nabla P \cdot \nabla \phi dxdt 
+\chi_1 \int^{T}_{0}\int_{\Omega} \xi(P) P \nabla S \cdot \nabla \phi dxdt 
\nonumber \\
&&+ \chi_2 \int^{T}_{0}\int_{\Omega} \eta(P) P \nabla I \cdot \nabla \phi dxdt +\int^{T}_{0}\int_{\Omega}\left( \frac{mIP}{a+I} -dP\right) \phi dxdt.
\end{eqnarray}
\end{definition}

We next state some lemmas that will be frequently used.

\begin{lemma}
\label{lem:wlp}
Consider exponents $0 < p_0 < p_1 < \infty$ and a domain $\Omega$ that is a closed and bounded in $\mathbb{R}^{n}$, $n\geq 1$, and a $u(x) \in L^{p_1}(\Omega)$. Then,

\begin{equation}
||u||_{L^{p_{\theta}}(\Omega)} \leq C||u||^{1-\theta}_{L^{p_{0}}(\Omega)} ||u||^{\theta}_{L^{p_{1}}(\Omega)}
\end{equation}

for all $0 \leq \theta \leq 1$, and where $p_{\theta}$ is defined as $\frac{1}{p_{\theta}} = \frac{1-\theta}{p_0} + \frac{\theta}{p_1}$.
\end{lemma}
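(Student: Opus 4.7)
The plan is to recognise this as the classical log-convexity of the $L^{p}$-norms, which follows from a single application of H\"older's inequality. First I would dispense with the trivial endpoints $\theta=0$ and $\theta=1$, where $p_{\theta}$ reduces to $p_{0}$ or $p_{1}$ and the claimed inequality holds as an equality with $C=1$, and then focus on $0<\theta<1$.

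For the interior case, the decisive algebraic step is the decomposition $|u|^{p_{\theta}} = |u|^{p_{\theta}(1-\theta)}\cdot|u|^{p_{\theta}\theta}$, to which I would apply H\"older's inequality with the conjugate pair $q_{1}=\tfrac{p_{0}}{p_{\theta}(1-\theta)}$ and $q_{2}=\tfrac{p_{1}}{p_{\theta}\theta}$. The verification that $\tfrac{1}{q_{1}}+\tfrac{1}{q_{2}}=1$ is immediate from the definition $\tfrac{1}{p_{\theta}}=\tfrac{1-\theta}{p_{0}}+\tfrac{\theta}{p_{1}}$, which was engineered precisely to make these two exponents conjugate. H\"older's inequality then delivers
$$\int_{\Omega}|u|^{p_{\theta}}\,dx \;\leq\; \left(\int_{\Omega}|u|^{p_{0}}\,dx\right)^{1/q_{1}}\!\left(\int_{\Omega}|u|^{p_{1}}\,dx\right)^{1/q_{2}},$$
and since $p_{0}/q_{1}=p_{\theta}(1-\theta)$ and $p_{1}/q_{2}=p_{\theta}\theta$, taking a $p_{\theta}$-th root gives the required bound with $C=1$; the constant $C$ allowed in the statement simply accommodates any convenient absorption of measure factors in later use.

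There is no genuine obstacle here, and the argument is a standard textbook fact; the only care required is the bookkeeping of the exponents. I would also remark that the boundedness of $\Omega$ plays no real role in the interpolation inequality itself (the proof is valid on any measure space), although the boundedness of $\Omega$ together with $u\in L^{p_{1}}(\Omega)$ does ensure, via another use of H\"older's inequality, that $\|u\|_{L^{p_{0}}(\Omega)}$ is finite, so that the right-hand side of the displayed bound is genuinely meaningful when the lemma is invoked later in the weak-solution analysis.
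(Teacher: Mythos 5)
Your proof is correct and is the standard log-convexity/H\"older interpolation argument; the exponent bookkeeping checks out (indeed $\tfrac{1}{q_1}+\tfrac{1}{q_2}=p_\theta\bigl(\tfrac{1-\theta}{p_0}+\tfrac{\theta}{p_1}\bigr)=1$ and $p_0/(q_1p_\theta)=1-\theta$, $p_1/(q_2p_\theta)=\theta$), and your separate treatment of the endpoints $\theta=0,1$ is the right way to avoid the degenerate conjugate exponents. The paper states this lemma as a known preliminary and supplies no proof of its own, so there is nothing to compare against; your argument, including the closing remark that boundedness of $\Omega$ is only needed to guarantee $u\in L^{p_0}(\Omega)$ and not for the inequality itself, is exactly what a complete writeup would contain.
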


\begin{lemma}
\label{lem:gns}
Consider a $\phi \in W^{m,q^{'}}(\Omega) \cap L^{q}(\Omega)$. Then if $p^{'}, q^{'}, q \geq 1, 0 \leq \theta \leq 1$, and

\begin{equation}
k - \frac{n}{p^{'}} \leq \theta \left(  m - \frac{n}{q^{'}} \right) - (1 - \theta) \frac{n}{q},
\end{equation}
there exists a constant C such that

\begin{equation}
	||\phi||_{W^{k,p^{'}}(\Omega)} \leq C ||\phi||^{\theta}_{W^{m,q^{'}}(\Omega)} ||\phi||^{1 - \theta}_{L^{q}(\Omega)} 
	\end{equation}.

\end{lemma}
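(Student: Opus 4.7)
This statement is the classical Gagliardo--Nirenberg interpolation inequality, stated here for the bounded smooth domain $\Omega$. My plan is to reduce the result on $\Omega$ to the corresponding inequality on the whole space $\mathbb{R}^n$, and then to establish the $\mathbb{R}^n$ version by a scaling-plus-density argument.

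First, I would invoke a Sobolev extension operator. Because $\Omega$ has smooth boundary (even $C^{2+\beta}$ would suffice for the orders of differentiation involved), Stein's extension theorem produces a linear map $E \colon W^{m,q'}(\Omega) \to W^{m,q'}(\mathbb{R}^n)$ with $(E\phi)|_\Omega = \phi$ and $\|E\phi\|_{W^{m,q'}(\mathbb{R}^n)} \le C_1\|\phi\|_{W^{m,q'}(\Omega)}$, and simultaneously $\|E\phi\|_{L^q(\mathbb{R}^n)} \le C_2\|\phi\|_{L^q(\Omega)}$. Since $\|\phi\|_{W^{k,p'}(\Omega)} \le \|E\phi\|_{W^{k,p'}(\mathbb{R}^n)}$, it suffices to prove the inequality on $\mathbb{R}^n$ and then substitute $E\phi$.

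On $\mathbb{R}^n$, by density it is enough to treat $\phi \in C_c^\infty(\mathbb{R}^n)$. Here I would first carry out a scaling analysis: for $\phi_\lambda(x) = \phi(\lambda x)$ one has $\|\phi_\lambda\|_{L^{p'}} = \lambda^{-n/p'}\|\phi\|_{L^{p'}}$, $\|D^k\phi_\lambda\|_{L^{p'}} = \lambda^{k - n/p'}\|D^k\phi\|_{L^{p'}}$, and analogous relations for the other two norms. A scale-invariant inequality therefore forces exactly the relation in the hypothesis, and the inequality when the hypothesis holds with strict inequality follows from the borderline case by trivially bounding one derivative norm by a higher one on a bounded region (combined with the extension reduction). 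For the borderline equality case, I would follow Nirenberg's original induction: reduce to the one-dimensional interpolation estimate $\|u'\|_{L^2}^2 \le C\|u\|_{L^\infty}\|u''\|_{L^1}$ via Fourier multipliers or integration by parts, iterate on the derivative order $k$, and interpolate the resulting bounds via H\"older's inequality. Alternatively, in the equality case, one can use the Riesz potential representation $\phi = I_m * g$ for some $g \in L^{q'}$ and apply the Hardy--Littlewood--Sobolev inequality together with H\"older's inequality to interpolate between the endpoint norms.

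The main obstacle, if one wanted to track everything carefully, is handling the extension operator when the statement is meant with sharp or optimal exponents; but since the paper only requires the existence of some constant $C$, the bounded linear extension operator on a smooth bounded domain is fully adequate. A secondary technical point is that Nirenberg's iteration is clean only in certain ranges of $p', q', q$, and one must verify that the admissible $\theta \in [0,1]$ compatible with the scaling condition falls in the range covered by the iteration; the strict inequality version of the hypothesis makes this more flexible but requires the auxiliary step of trivially dominating $W^{k,p'}$ norms by higher-order Sobolev norms before applying the equality case.
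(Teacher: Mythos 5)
The paper does not actually prove this lemma: it is stated as a preliminary and treated as the classical Gagliardo--Nirenberg interpolation inequality, cited to Friedman's book when it is later invoked (in the proof of Theorem \ref{thm:FFTE}), so there is no in-paper argument to compare yours against. Your outline --- reduce to $\mathbb{R}^n$ by a Stein extension operator, pass to $C_c^\infty$ by density, use scaling to identify the exponent balance, and establish the borderline case by Nirenberg's induction from the one-dimensional interpolation estimate --- is the standard textbook proof and is essentially sound; you are also right that on a bounded domain only the inequality (rather than equality) of the scaling exponents is needed, since lower-order norms are dominated by higher-order ones there. Two caveats worth keeping in mind: the scaling computation by itself proves nothing (it only constrains the exponents), so the substantive content of your proof lives entirely in the induction step that you only gesture at; and the lemma as stated in the paper silently omits the usual side conditions of the Gagliardo--Nirenberg theorem (the restriction $\theta \geq k/m$ and the exceptional case where $m-k-n/q'$ is a nonnegative integer, for which $\theta=1$ must be excluded), so a fully rigorous write-up would either add those hypotheses or verify that the specific exponents used elsewhere in the paper avoid them --- a point you partially acknowledge in your closing paragraph.
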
	

Now we state the classical Aubin-Lions compactness Lemma, \cite{robinson2003infinite}.

\begin{lemma}
\label{lem:al}
Let $X_0, X$ and $X_1$ be three reflexive Banach spaces with 

$X_0 \hookrightarrow \hookrightarrow X \hookrightarrow X_1$. That is $ X_0$ is compactly embedded in $X$ and that $X$ is continuously embedded in $X_1$. For $1\leq p,q \leq \infty$, let

 \begin{equation}
W = \{ u \in L^{p^{'}}([0,T];X_0) \ | \  u^{'} \in L^{q^{'}}([0,T];X_1)  \}
\end{equation}

(i) If $p^{'} < \infty$, then the embedding of $W$ into $L^{p^{'}}([0,T];X)$ is compact.

(ii) If $p^{'} = \infty$ and $q^{'} > 1$, then the embedding of $W$ into $C([0,T];X)$ is compact.
\end{lemma}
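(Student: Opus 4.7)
The plan is to establish this classical Aubin-Lions-Simon compactness result via the standard interpolation-plus-equicontinuity scheme. The backbone is Ehrling's lemma: under the compact embedding $X_0 \hookrightarrow\hookrightarrow X$ and the continuous embedding $X \hookrightarrow X_1$, for every $\varepsilon > 0$ there exists $C_\varepsilon > 0$ such that $\|u\|_X \leq \varepsilon \|u\|_{X_0} + C_\varepsilon \|u\|_{X_1}$ for every $u \in X_0$. I would prove this by contradiction: a counterexample sequence with $\|u_n\|_X > \varepsilon\|u_n\|_{X_0} + n\|u_n\|_{X_1}$, normalized so that $\|u_n\|_{X_0}=1$, would have a subsequence converging strongly in $X$ by compactness, but must simultaneously satisfy $\|u_n\|_{X_1}\to 0$, forcing the $X$-limit to vanish and contradicting $\|u_n\|_X > \varepsilon$.

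For part (i), I would start with a bounded sequence $\{u_n\}$ in $W$. Reflexivity of $X_0, X_1$ together with Banach-Alaoglu produce a subsequence with $u_n \rightharpoonup u$ in $L^{p'}([0,T];X_0)$ and $u_n' \rightharpoonup u'$ in $L^{q'}([0,T];X_1)$. Replacing $u_n$ by $u_n - u$, I may assume $u\equiv 0$. Applying Ehrling pointwise in $t$ and integrating yields
\begin{equation*}
\|u_n\|_{L^{p'}([0,T];X)} \leq \varepsilon \|u_n\|_{L^{p'}([0,T];X_0)} + C_\varepsilon \|u_n\|_{L^{p'}([0,T];X_1)}.
\end{equation*}
Since the first term is $O(\varepsilon)$ uniformly in $n$, it suffices to show $u_n \to 0$ strongly in $L^{p'}([0,T];X_1)$. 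For this I would apply the Kolmogorov-Riesz-Fr\'{e}chet compactness criterion in Bochner spaces: uniform $L^{p'}([0,T];X_1)$ boundedness follows from the $X_0$-bound and $X_0 \hookrightarrow X_1$, while time-equicontinuity follows from the identity $u_n(t+h)-u_n(t)=\int_t^{t+h}u_n'(s)\,ds$ together with a H\"older estimate producing decay of order $h^{1-1/q'}$ as $h\to 0$. Letting $\varepsilon\to 0$ then closes the argument.

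For part (ii), with $p'=\infty$ and $q'>1$, the target is $C([0,T];X)$ and I would invoke Arzel\`a-Ascoli. Pointwise precompactness in $X$ at each $t$ follows from the $L^\infty([0,T];X_0)$-bound together with $X_0 \hookrightarrow\hookrightarrow X$. Equicontinuity in $X$ is obtained by applying Ehrling to the increment $u_n(t+h)-u_n(t)$: its $X_0$-norm is controlled uniformly by the hypothesis, while its $X_1$-norm is bounded by $h^{1-1/q'}\|u_n'\|_{L^{q'}(X_1)}$ via H\"older, which tends to zero as $h\to 0$ precisely because $q'>1$.

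The main obstacle, and the genuinely non-trivial part, is upgrading weak convergence to strong convergence in the Bochner space $L^{p'}([0,T];X_1)$. This is where the compactness actually materializes: the derivative bound $u' \in L^{q'}([0,T];X_1)$ supplies the time-translation regularity needed to apply Kolmogorov-Riesz (or Arzel\`a-Ascoli in the endpoint case), and the three-space structure $X_0 \hookrightarrow\hookrightarrow X \hookrightarrow X_1$ must be balanced so that Ehrling's $\varepsilon$-factor absorbs the uniform bound in the strongest space while $C_\varepsilon$ is killed by time-regularity in the weakest. Handling the endpoint $p'=\infty$ requires separate bookkeeping since Kolmogorov-Riesz is unavailable there and the Arzel\`a-Ascoli hypotheses must be verified directly.
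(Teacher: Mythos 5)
The paper does not prove this lemma at all: it is quoted as the ``classical Aubin--Lions compactness Lemma'' with a citation to Robinson's book, so there is no in-paper argument to compare against. Judged on its own, your outline is the standard Aubin--Lions--Simon proof and its architecture is sound: Ehrling's interpolation inequality (your contradiction proof of it is correct), reduction of compactness in $L^{p'}([0,T];X)$ to compactness in the weak space $L^{p'}([0,T];X_1)$, translation estimates from the bound on $u'$, and Arzel\`a--Ascoli at the endpoint $p'=\infty$. This is exactly how the cited sources prove the result.

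Two endpoint cases in your write-up deserve a patch, since the statement allows $1 \leq p', q' \leq \infty$. First, when $q'=1$ your H\"older estimate gives $\|u_n(t+h)-u_n(t)\|_{X_1} \leq h^{1-1/q'}\|u_n'\|_{L^{q'}(X_1)}$ with exponent $1-1/q'=0$, so it yields no decay as $h\to 0$; the fix is the integrated (Fubini) version
\begin{equation*}
\int_0^{T-h}\|u_n(t+h)-u_n(t)\|_{X_1}\,dt \;\leq\; \int_0^{T-h}\!\!\int_t^{t+h}\|u_n'(s)\|_{X_1}\,ds\,dt \;\leq\; h\,\|u_n'\|_{L^{1}([0,T];X_1)},
\end{equation*}
which restores uniform smallness of translates in the $L^1$ sense and, combined with the uniform $L^{p'}$ bound and interpolation of Lebesgue exponents, gives the $L^{p'}(X_1)$ translate estimate needed for Kolmogorov--Riesz. (This is also consistent with part (ii) explicitly requiring $q'>1$: there the pointwise H\"older bound is genuinely needed for equicontinuity, and the hypothesis guarantees a positive exponent.) Second, your preliminary step of extracting a weak limit via reflexivity and Banach--Alaoglu is not available in $L^{1}([0,T];X_0)$ when $p'=1$; but that step is only a normalization convenience, and the Kolmogorov--Riesz route does not actually require it, so you should either restrict that reduction to $1<p'<\infty$ or drop it. With these repairs the proof is complete and matches the classical argument the paper is implicitly relying on.
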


\begin{lemma}
\label{lem:pc1}
    Consider functions $\phi, \sigma \in C^{1}(\Omega), \Omega \subset \mathbb{R}^{n}, n=1,2, |\Omega| < \infty$, $\epsilon > 0$. Then if $p \geq 2$, we have
    \begin{equation}
        (|\nabla \phi|^{2} + \epsilon)^{\frac{p-2}{2}}|\nabla \phi|^{2} \geq |\nabla \phi|^{p},
\end{equation}

\begin{equation}
        (|\nabla \phi|^{2} + \epsilon)^{\frac{p-2}{2}}|\nabla \phi| \leq C\left(|\nabla \phi|^{p}+1\right),
\end{equation}

and 

\begin{equation}
        (|\phi|^{p-2}\phi - |\sigma|^{p-2}\sigma)(\phi - \sigma) \geq C |\phi - \sigma|^{p}.
\end{equation}

\end{lemma}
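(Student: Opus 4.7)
The lemma bundles three algebraic inequalities that feed into the upcoming approximation scheme for the $p$-Laplacian weak formulation. The plan is to dispatch them in increasing order of difficulty, viewing only the third as requiring genuine work.

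For the first inequality, my plan is simply to use that $\epsilon \geq 0$ and $\frac{p-2}{2} \geq 0$ give $(|\nabla \phi|^2 + \epsilon)^{(p-2)/2} \geq (|\nabla \phi|^{2})^{(p-2)/2} = |\nabla \phi|^{p-2}$, and multiplying by $|\nabla \phi|^2$ finishes it in one line. For the second inequality, I would peel off $\epsilon$ using the elementary subadditivity $(a+b)^q \leq C_q(a^q + b^q)$ for $a,b \geq 0$ and $q \geq 0$, obtaining $(|\nabla \phi|^2 + \epsilon)^{(p-2)/2} \leq C(|\nabla \phi|^{p-2} + 1)$ with $C = C(p,\epsilon)$. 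Multiplying by $|\nabla \phi|$ gives $C(|\nabla \phi|^{p-1} + |\nabla \phi|)$, after which Young's inequality with conjugate exponents $(p, p/(p-1))$ absorbs the two sublinear terms into $C'(|\nabla \phi|^p + 1)$.

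The third inequality is the scalar monotonicity inequality classically associated with the $p$-Laplacian. My plan is to first establish the representation
\[
|a|^{p-2}a - |b|^{p-2}b = (p-1)(a-b)\int_0^1 |b + t(a-b)|^{p-2}\,dt
\]
by differentiating $g(t) = |b + t(a-b)|^{p-2}(b + t(a-b))$ on $[0,1]$ and applying the fundamental theorem of calculus. Multiplying by $(a-b)$ then reduces the claim to the pointwise lower bound $\int_0^1 |b + t(a-b)|^{p-2}\,dt \geq c_p |a-b|^{p-2}$. A change of variables $v = b + t(a-b)$ rewrites this integral as $|a-b|^{-1}\int_{\min(a,b)}^{\max(a,b)} |v|^{p-2}\,dv$, and a short case analysis on the signs of $a$ and $b$, combined with the elementary observation that $\min_{x \in [0,1]}\bigl(x^{p-1} + (1-x)^{p-1}\bigr) = 2^{2-p}$, yields the required $c_p = 2^{2-p}/(p-1)$.

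The main obstacle is the third inequality, specifically the sign-straddling case where $a$ and $b$ have opposite signs and the integrand vanishes at some interior $t_\ast \in (0,1)$; this is precisely where the $2^{2-p}$ constant originates and is the one place care must be taken to avoid losing the lower bound. The first two inequalities are routine applications of monotonicity of powers, subadditivity, and Young's inequality, with no subtlety.
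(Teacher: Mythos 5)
The paper states Lemma \ref{lem:pc1} without proof, treating all three inequalities as standard facts from the $p$-Laplacian literature, so there is no in-paper argument to compare against; your proposal supplies a complete and correct proof. The first two inequalities go through exactly as you describe (monotonicity of $t\mapsto t^{(p-2)/2}$, then subadditivity of powers plus Young), and your treatment of the third via the integral representation $|a|^{p-2}a-|b|^{p-2}b=(p-1)(a-b)\int_0^1|b+t(a-b)|^{p-2}\,dt$, the substitution $v=b+t(a-b)$, and the case analysis on signs correctly produces the constant $2^{2-p}$ in the final inequality (your $c_p=2^{2-p}/(p-1)$ for the integral, multiplied back by $(p-1)$). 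Two minor points worth making explicit: the fundamental theorem of calculus applies because $s\mapsto|s|^{p-2}s$ is $C^1$ for $p\geq 2$ (its derivative $(p-1)|s|^{p-2}$ is continuous, vanishing at $0$ when $p>2$); and in the second inequality your constant $C$ a priori depends on $\epsilon$ through the term $\epsilon^{(p-2)/2}$, but since the paper only uses the regularization for $0<\epsilon\ll 1$ this is bounded by $1$ and the constant can be taken uniform in $\epsilon$, which is what the subsequent limit $\epsilon\searrow 0$ requires.
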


 To tackle well posedness, we first posit the regularized system

\begin{equation}
\label{eq:pde_modeld2}
\left\{ \begin{array}{ll}
S^{\epsilon}_t &= \delta_1 \nabla \cdot (\nabla S^{\epsilon})  +  rS^{\epsilon}\Big(1-\frac{(S^{\epsilon}+I^{\epsilon})}{K}\Big) - \lambda S^{\epsilon} I^{\epsilon}, \quad x\in \Omega, t>0\\
		I^{\epsilon}_t &= \nabla \cdot a(x,I^{\epsilon},\nabla I^{\epsilon})   + \lambda S^{\epsilon}I^{\epsilon} - \frac{mP^{\epsilon}I^{\epsilon}}{a+I^{\epsilon}} -\mu (I^{\epsilon})^{\gamma}, \quad x\in\Omega,t>0, \\
	 a(x,I^{\epsilon},\nabla I^{\epsilon}) & = \Big( \delta_{2}  \left(|\nabla I^{\epsilon}|^{2}+{\epsilon}\right)^{\frac{p-2}{2}}  \nabla I^{\epsilon} \Big) , \ p > 2, \ 0 < \gamma < 1 \\
 P^{\epsilon}_t &= \delta_3 \nabla \cdot (\nabla P^{\epsilon}) - \chi_1 \nabla \cdot (\xi(P^{\epsilon}) P^{\epsilon} \nabla S^{\epsilon}) - \chi_2\nabla \cdot(\eta(P^{\epsilon}) P^{\epsilon} \nabla I^{\epsilon}) + \dfrac{m I^{\epsilon} P^{\epsilon}}{a + I^{\epsilon}} - dP^{\epsilon} \\ 
\nabla S^{\epsilon} \cdot \eta &= |\nabla I^{\epsilon}|^{2}\nabla I^{\epsilon} \cdot \eta = \nabla P^{\epsilon} \cdot \eta =0, \quad x\in \partial \Omega \\
		S^{\epsilon}(x,0)&=S^{\epsilon}_0(x), P^{\epsilon}(x,0)=P^{\epsilon}_{0} ,I^{\epsilon}(x,0)=I^{\epsilon}_{0} (x), \quad x\in \Omega \\
\end{array}\right.
\end{equation}

Here $\Omega \subset \mathbb{R}^n$, with smooth boundary  $n=1,2, \ |\Omega| < \infty$. The local existence of the above is standard and we can state the following lemma.

\begin{lemma}
\label{main_result}
  Consider system \eqref{eq:pde_modeld2}. If the initial conditions are such that $(S^{\epsilon}_0, I^{\epsilon}_0, P^{\epsilon}_0) \in (W^{1, p}(\Omega))^3$ with $S^{\epsilon}_0, I^{\epsilon}_0, P^{\epsilon}_0 \geq 0$ and $p > n$ holds true and Assumption \ref{assu2} holds true.  Then there exists a classical solution  to \eqref{eq:pde_modeld2}, with $(S^{\epsilon}, I^{\epsilon}, P^{\epsilon}) \in (C([0, \infty); W^{1, p}(\Omega)) \cap (C^{2, 1}(\bar{\Omega} \times (0, T_{max, \epsilon})))^3$. If $T_{max, \epsilon} < \infty$, then $\lim_{t \rightarrow T_{max, \epsilon}} ||S^{\epsilon}||_{W^{1, \infty}(\Omega)} + ||I^{\epsilon}||_{L^{\infty}(\Omega)} + ||P^{\epsilon}||_{L^{\infty}(\Omega)} = + \infty$.
\end{lemma}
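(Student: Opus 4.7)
The plan is to mimic the proof of Lemma \ref{lem:local_existence} verbatim, exploiting the fact that the regularization $\epsilon > 0$ turns the degenerate $p$-Laplacian into a uniformly (non-degenerate) quasilinear operator. Setting $U^\epsilon = (S^\epsilon, I^\epsilon, P^\epsilon)^T$, I would rewrite \eqref{eq:pde_modeld2} in the form
\begin{equation*}
U^\epsilon_t = \nabla \cdot \bigl( M(U^\epsilon, \nabla U^\epsilon) \nabla U^\epsilon \bigr) + F(U^\epsilon),
\end{equation*}
where the principal-part matrix is the lower-triangular
\begin{equation*}
M(U^\epsilon, \nabla U^\epsilon) = \begin{pmatrix} \delta_1 & 0 & 0 \\ 0 & \delta_2 (|\nabla I^\epsilon|^2+\epsilon)^{(p-2)/2} & 0 \\ -\chi_1 \xi(P^\epsilon) P^\epsilon & -\chi_2 \eta(P^\epsilon) P^\epsilon & \delta_3 \end{pmatrix}.
\end{equation*}
Since $(|\nabla I^\epsilon|^2 + \epsilon)^{(p-2)/2} \geq \epsilon^{(p-2)/2} > 0$ and is $C^\infty$ in $\nabla I^\epsilon$, the middle coefficient is smooth and uniformly bounded from below on bounded sets, so that $M$ is strictly positive definite. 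This is precisely the setting for Amann's theory of quasilinear parabolic systems, so \cite[Theorem 7.3]{amann1990dynamic} produces a unique local-in-time classical solution $(S^\epsilon, I^\epsilon, P^\epsilon)$ in $C([0,T_{\max,\epsilon}); W^{1,p}(\Omega)) \cap C^{2,1}(\bar\Omega\times(0,T_{\max,\epsilon}))$ for $p > n$.

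The only reaction term that is not at least $C^1$ in its arguments is $\mu(I^\epsilon)^\gamma$ with $0<\gamma<1$, which is merely Hölder continuous at $I^\epsilon=0$. I would resolve this by an auxiliary regularization: replace $\mu(I^\epsilon)^\gamma$ by $\mu((I^\epsilon)^2+\kappa)^{\gamma/2}$, apply Amann as above to get a classical solution $U^{\epsilon,\kappa}$ on some interval independent of $\kappa \in (0,1]$, derive $\kappa$-uniform $L^\infty$ and $W^{1,p}$ bounds by the same energy and semigroup arguments used in Lemma \ref{lem:local_existence}, and pass to the limit $\kappa\downarrow 0$ using the Aubin--Lions Lemma \ref{lem:al} together with standard parabolic Schauder estimates away from $\{I^\epsilon=0\}$. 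Non-negativity of each component is then obtained componentwise from the parabolic maximum principle: the $S^\epsilon$ and $P^\epsilon$ equations have sources vanishing at $0$, and $\lambda S^\epsilon I^\epsilon - m P^\epsilon I^\epsilon/(a+I^\epsilon) - \mu(I^\epsilon)^\gamma$ likewise vanishes at $I^\epsilon = 0$ because $\gamma > 0$.

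Finally, the blow-up alternative in the stated form follows from the lower-triangular structure of $M$ together with \cite[Theorem 15.5]{amann1993nonhomogeneous}: once $\|S^\epsilon\|_{W^{1,\infty}} + \|I^\epsilon\|_{L^\infty} + \|P^\epsilon\|_{L^\infty}$ stays finite on $[0,T_{\max,\epsilon})$, one can reopen the local existence theorem and extend beyond $T_{\max,\epsilon}$, forcing $T_{\max,\epsilon}=\infty$. The main obstacle I anticipate is the non-Lipschitz term $(I^\epsilon)^\gamma$; everything else is a direct transcription of the $p=2$, $\gamma=1$ argument. Controlling the $\kappa$-limit carefully, so that both the regularized source and the classical regularity of $I^\epsilon$ are preserved, is the point where one cannot be entirely cavalier.
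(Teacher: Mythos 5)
The paper offers no proof of this lemma at all: it simply declares that ``the local existence of the above is standard'' and states the result. Your proposal is therefore more detailed than anything in the paper, and its overall shape --- recast \eqref{eq:pde_modeld2} as a triangular quasilinear system, use the fact that $\epsilon>0$ makes the regularized $p$-Laplacian uniformly elliptic, invoke an abstract local existence theorem, and close with the blow-up alternative --- is exactly the intended reading of the paper's one-line remark, mirroring Lemma \ref{lem:local_existence}.

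Two points in your sketch do not hold up as written. First, \cite[Theorem 7.3]{amann1990dynamic} applies to principal parts of the form $\nabla\cdot(M(U)\nabla U)$ with coefficients depending on $U$, not on $\nabla U$; your matrix $M(U^\epsilon,\nabla U^\epsilon)$ has a gradient-dependent entry, so the citation does not directly cover it. The result you want is still true --- the regularized operator $\nabla\cdot\bigl((|\nabla I^\epsilon|^2+\epsilon)^{(p-2)/2}\nabla I^\epsilon\bigr)$ is uniformly elliptic with smooth coefficients, and the $I^\epsilon$-equation decouples in the principal part --- but the correct reference is the classical quasilinear theory for gradient-dependent coefficients (Ladyzhenskaya--Solonnikov--Uraltseva or Lieberman), or Amann's more general framework, not the theorem you cite. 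Second, your auxiliary regularization $\mu((I^\epsilon)^2+\kappa)^{\gamma/2}$ equals $\mu\kappa^{\gamma/2}>0$ at $I^\epsilon=0$, so the absorption term does not vanish there and the maximum-principle argument you invoke for non-negativity of $I^{\epsilon,\kappa}$ fails: the regularized solution can be pushed below zero, after which $((I^{\epsilon,\kappa})^2+\kappa)^{\gamma/2}$ no longer controls anything useful. A regularization that is Lipschitz \emph{and} vanishes at zero, e.g.\ $\mu\, I^\epsilon\,((I^\epsilon)^2+\kappa)^{(\gamma-1)/2}$ or $\mu\min\{(I^\epsilon)^\gamma,\kappa^{\gamma-1}I^\epsilon\}$ for $I^\epsilon\geq 0$, repairs this. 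With those two substitutions your argument goes through and supplies the proof the paper omits; your identification of the H\"older-only term $(I^\epsilon)^\gamma$ as the genuine obstacle (and the reason the lemma claims existence but not uniqueness) is exactly right.
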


We begin to make uniform estimates so as to prove the existence of a bounded global weak solution to \eqref{eq:pde_modeld2}. We state the following lemma, about the regularized system,

\begin{lemma}
\label{lem:lq1}
Consider system \eqref{eq:pde_modeld2}. If the initial data $(S_{0}(x), P_{0}(x)) \in W^{1,\infty}(\Omega), I_{0}(x) \in L^{\infty}(\Omega)$, then if Assumption \ref{assu2} holds true, for any set of positive problem parameters and $q \geq 1$, there exists a $C=C(q)$ s.t. for any $1>>\epsilon > 0$, we have

\begin{equation}
    ||I^{\epsilon}||_{L^{q}(\Omega)} \leq C,
\end{equation}
  for all $t \in [0, T_{max, \epsilon})$  .
\end{lemma}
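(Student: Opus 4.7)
The plan is to derive a uniform-in-$\epsilon$ differential inequality of the form $y_q'(t) + c_1 y_q(t) \leq c_2$ for $y_q(t) := \int_\Omega (I^\epsilon)^q\,dx$, by testing the second equation of \eqref{eq:pde_modeld2} against $(I^\epsilon)^{q-1}$ and then closing via Gagliardo--Nirenberg interpolation and an a priori $L^1$ bound on $I^\epsilon$. The parabolic maximum principle applied to the $S^\epsilon$-equation (which is unaffected by the regularization) reproduces $0 \leq S^\epsilon \leq K$ uniformly in $\epsilon$, exactly as in part~(1) of Lemma~\ref{lem:local_existence}. Adding the three equations of \eqref{eq:pde_modeld2}, integrating over $\Omega$, discarding the non-positive contributions $-\mu(I^\epsilon)^\gamma$ and $-dP^\epsilon$, and using Young's inequality on $rS^\epsilon$ delivers the uniform bound $\|I^\epsilon(\cdot,t)\|_{L^1(\Omega)} \leq C_0$ on $[0, T_{\max,\epsilon})$, verbatim as in part~(2) of Lemma~\ref{lem:local_existence}. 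This settles $q=1$.

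For $q>1$, I would multiply the $I^\epsilon$-equation by $q(I^\epsilon)^{q-1}$ and integrate by parts (legitimate because $I^\epsilon$ is classical on $[0,T_{\max,\epsilon})$ by Lemma~\ref{main_result}); after discarding the non-positive predation and absorption terms and invoking the monotonicity estimate of Lemma~\ref{lem:pc1}, the substitution $v := (I^\epsilon)^\alpha$ with $\alpha := (p+q-2)/p$ yields
\begin{equation*}
\frac{d}{dt}\int_\Omega (I^\epsilon)^q\,dx + c(p,q)\int_\Omega |\nabla v|^p\,dx \leq q\lambda K\int_\Omega (I^\epsilon)^q\,dx.
\end{equation*}
Applying Lemma~\ref{lem:gns} I would interpolate
\begin{equation*}
\|v\|_{L^{q/\alpha}(\Omega)}^{q/\alpha} \leq C\bigl(\|\nabla v\|_{L^p(\Omega)}^{\theta q/\alpha}\|v\|_{L^{1/\alpha}(\Omega)}^{(1-\theta)q/\alpha} + \|v\|_{L^{1/\alpha}(\Omega)}^{q/\alpha}\bigr),
\end{equation*}
noting that $\|v\|_{L^{1/\alpha}(\Omega)}^{1/\alpha} = \|I^\epsilon\|_{L^1(\Omega)}\leq C_0$ by the first step, so the right-hand side is controlled purely by $\|\nabla v\|_{L^p}$ modulo $\epsilon$-independent constants.

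The crux is the algebraic check that the Gagliardo--Nirenberg exponent $\theta$, dictated by $\alpha/q = \theta(1/p-1/n) + (1-\theta)\alpha$, satisfies $\theta q/\alpha < p$; a one-line calculation reduces this strictness to $p - 2 + p/n > 0$, which is automatic for $p>2$ and $n\geq 1$. Young's inequality then absorbs the gradient term on the right-hand side of the energy inequality, and a second Young-based inversion of the interpolation bounds $y_q$ from below by a fraction of $\int_\Omega |\nabla v|^p\,dx$ modulo additive constants; together these produce the ODE $y_q'(t) + c_1 y_q(t) \leq c_2$ with $c_1, c_2$ depending on $q$ but not on $\epsilon$, and a standard Gronwall step concludes. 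The main obstacle I expect is precisely verifying the GN exponent constraint $\theta q/\alpha < p$ uniformly in $q\geq 1$ and tracking the $q$-dependence of the constants cleanly; everything else is routine once that inequality is pinned down.
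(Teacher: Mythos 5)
Your overall skeleton (test with $q(I^{\epsilon})^{q-1}$, pass to $v=(I^{\epsilon})^{(p+q-2)/p}$, interpolate by Gagliardo--Nirenberg, absorb the gradient by Young, close with an ODE comparison) is the same as the paper's, and your exponent check $\theta q/\alpha<p\iff p-2+p/n>0$ is correct. The genuine gap is in the step you treat as routine: the time-uniform $L^{1}$ bound on $I^{\epsilon}$. You propose to add the three equations, \emph{discard} the nonpositive terms $-\mu(I^{\epsilon})^{\gamma}$ and $-dP^{\epsilon}$, and conclude ``verbatim as in part (2) of Lemma \ref{lem:local_existence}.'' But part (2) of that lemma does not discard those terms --- it keeps $-\mu\int I$ and $-d\int P$ (with $\gamma=1$) precisely to build the absorption term in $z'+c_{1}z\leq c_{2}$ with $c_{1}=\min\{r,\mu,d\}$. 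If you drop them you only get $z'\leq c_{2}$, i.e.\ a bound growing linearly in $t$. And in the present regime $0<\gamma<1$ the term $-\mu\int_{\Omega}(I^{\epsilon})^{\gamma}dx$ cannot be converted into $-\mu'\int_{\Omega}I^{\epsilon}dx+C$: taking $I$ of height $M$ on a set of measure $M^{-\gamma}$ shows $\int I^{\gamma}$ stays bounded while $\int I$ diverges, so $\int I^{\gamma}$ does not dominate $\int I$. Since your $q>1$ argument feeds $\|v\|_{L^{1/\alpha}}^{1/\alpha}=\|I^{\epsilon}\|_{L^{1}}$ into the interpolation, the missing uniform $L^{1}$ bound propagates: as written you obtain at best a constant $C(q,T)$ on finite time intervals, not the time-uniform $C(q)$ the lemma asserts (and which the paper's Gronwall step delivers).

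The paper avoids this entirely by interpolating against $\|I^{\epsilon}\|_{L^{q^{*}}}$ with $q^{*}\leq q-1$ rather than against $L^{1}$: after absorbing the gradient term one is left with $\tfrac{d}{dt}\|I^{\epsilon}\|_{q}^{q}+C_{1}\|I^{\epsilon}\|_{q}^{q}\leq C_{2}\left(\|I^{\epsilon}\|_{q}^{q}\right)^{q^{*}/q}$ with $q^{*}/q<1$, a self-dissipative ODE that needs no lower-order a priori bound at all. If you want to keep your architecture, either adopt that sublinear closure, or replace the $L^{1}$ anchor by a quantity the energy identity actually controls --- e.g.\ at $q=2$ the absorption term $2\mu\|I^{\epsilon}\|_{1+\gamma}^{1+\gamma}$ is available on the left, so interpolating $\|I^{\epsilon}\|_{2}$ between $\|\nabla I^{\epsilon}\|_{p}$ and $\|I^{\epsilon}\|_{1+\gamma}$ closes the $q=2$ case, after which higher $q$ can be bootstrapped. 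As it stands, the $L^{1}$ step is the one link in your chain that fails.
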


\begin{proof}
We multiply the equation for $I^{\epsilon}$ in \eqref{eq:pde_modeld2} by $q(I^{\epsilon})^{q-1}$, to obtain

\begin{eqnarray}
&& \frac{d}{dt}||I^{\epsilon}||^{q}_{q} 
 + \delta_{2}  q(q-1)\int_{\Omega} \left(
|\nabla I^{\epsilon}|^{2}+{\epsilon}\right)^{\frac{p-2}{2}}|\nabla I^{\epsilon}|^{2} (I^{\epsilon})^{q-2} dx
+q||I^{\epsilon}||^{q -1 +\gamma}_{q - 1 +\gamma} \nonumber \\
&& + mq\int_{\Omega} P^{\epsilon} \frac{(I^{\epsilon})^{q}}{a+I^{\epsilon}}dx =\lambda q\int_{\Omega}S^{\epsilon} (I^{\epsilon})^{q}dx \nonumber .\\
\end{eqnarray}
Note that when $2<p$, we have
\begin{equation}
\int_{\Omega} \left(
|\nabla I^{\epsilon}|^{2}+{\epsilon}\right)^{\frac{p-2}{2}}|\nabla I^{\epsilon}|^{2} (I^{\epsilon})^{q-2} dx
\geq 
\int_{\Omega} 
|\nabla I^{\epsilon}|^{p}(I^{\epsilon})^{q-2} dx . 
\end{equation}

Using the above estimate and positivity of $P_{\epsilon}$ entails,

\begin{equation}
\label{eq:est1}
 \frac{d}{dt}||I^{\epsilon}||^{q}_{q} 
 + \delta_{2}  q(q-1)\int_{\Omega} \left(
|\nabla I^{\epsilon}|\right)^{p}(I^{\epsilon})^{q-2} dx
+q||I^{\epsilon}||^{q - 1 +\gamma}_{q - 1 +\gamma}  \\
  \leq q\lambda\int_{\Omega}S^{\epsilon} (I^{\epsilon})^{q}dx. \nonumber \\
\end{equation}

Note that
\begin{equation}
\label{eq:gtl}
\int_{\Omega} (I^{\epsilon})^{q-2}|\nabla I^{\epsilon}|^{p}dx = \frac{p^{p}}{(p+q-2)^{p}}||\nabla (I^{\epsilon})^{\frac{p+q-2}{p}}||^{p}_{p}.
\end{equation}

Using the above, the supremum estimate on $S_{\epsilon}$ via Theorem \ref{thm:main_result}, lemma \ref{lem:local_existence}, and adding $||I^{\epsilon}||^{q}_{q}$ to both sides of \eqref{eq:est1} we obtain

\begin{equation}
\label{eq:iil}
 \frac{d}{dt}||I^{\epsilon}||^{q}_{q} +  ||I^{\epsilon}||^{q}_{q}+
\frac{p^{p}(\delta_{2}  q(q-1))}{(p+q-2)^{p}}||\nabla (I^{\epsilon})^{\frac{p+q-2}{p}}||^{p}_{p} + q||I^{\epsilon}||^{q - 1 +\gamma}_{q - 1 + \gamma}
 \leq (q\lambda K +1)||I^{\epsilon}||^{q}_{q}. 
\end{equation}

We have that

\begin{equation}
\int_{\Omega} (I^{\epsilon})^{q}dx
=\int_{\Omega}\left ((I^{\epsilon})^{\frac{p+q-2}{p}} \right)^{\frac{q p }{p+q-2}}dx.
\end{equation}

Now, we use GNS inequality on the function $\phi = (I)^{\frac{p+q-2}{p}}$ in Lemma \ref{lem:gns}. Thus

\begin{equation}
||\phi||_{L^{m}(\Omega)} \leq ||\nabla \phi||^{\theta}_{L^{r}(\Omega)} ||\phi||^{1-\theta}_{L^{q^{*}}(\Omega)}, 
\end{equation}
with

\begin{equation}
    m=\frac{q p }{p+q-2}, \ r = p, \ n=2.
\end{equation}

We need a choice of a $q^{*}$, such that $q<q^{*}$, and $\theta <1$, such that

\begin{equation}
 1 > \theta = \frac{\frac{1}{q^{*}} - \frac{p+q-2}{p q }}{\frac{1}{q^{*}} - \frac{1}{p} + \frac{1}{2}}.
\end{equation}
This is always possible, since $p>2$ we have  $\frac{p-2}{2} + \frac{p+q-2}{q} > 0$. It turns out that 

\begin{equation}
q^{*} = \frac{(1-\theta)2pq}{2(p+q-2)+\theta(p-2)q}.
\end{equation}
 Thus, we can choose $0< \theta <1$, such that $q^{*} = \frac{(1-\theta)2pq}{2(p+q-2)+\theta(p-2)q} \leq q-1$. By using Holder-Young with $\epsilon_{1}$, with exponents $p^{'},q^{'}$ where $p^{'}=\frac{p}{\theta}, q^{'}=\frac{q^{*}}{1-\theta}$, and further $0<\theta<1$ is chosen to ensure that $\frac{\theta}{p} + \frac{1-\theta}{q^{*}} = 1$, yields

\begin{eqnarray}
\label{eq:112}
&& \frac{d}{dt}||I^{\epsilon}||^{q}_{q} +  ||I^{\epsilon}||^{q}_{q} +
\frac{p^{p}(\delta_{2}q(q-1))}{(p+q-2)^{p}}||\nabla (I^{\epsilon})^{\frac{p+q-2}{p}}||^{p}_{p} \nonumber \\
 && \leq \epsilon_{1}^{\frac{p}{\theta}}   ||\nabla (I^{\epsilon})^{\frac{p+q-2}{p}}||^{p}_{p} + f(\epsilon_{1})C(q\lambda K + 1)   ||I^{\epsilon}||^{q^{*}}_{q^{*}}. 
\end{eqnarray}

Here $f(\epsilon)$ may depend on other parameters, and $C$ is the constant from the GNS inequality. By a choice of $\epsilon_{1} \leq \left(\frac{p^{p}(\delta_{2}q(q-1))}{(p+q-2)^{p}} \right)^{\frac{\theta}{p}}$, we can use the embedding of $L^{q}(\Omega) \hookrightarrow L^{q-1}(\Omega) \hookrightarrow L^{q^{*}}(\Omega)$, to yield

\begin{equation}
\label{eq:11}
 \frac{d}{dt}||I^{\epsilon}||^{q}_{q} +   C_{1}||I^{\epsilon}||^{q}_{q} \leq
C_{2}\left(||I^{\epsilon}||^{q}_{q}\right)^{\frac{q^{*}}{q}}.
\end{equation}

The $L^{q}(\Omega)$ bound on $I^{\epsilon}$, follows, via comparison to the ODE, $y^{'}=-C_{1}y + C_{2}y^{\frac{q^{*}}{q}}$, for $C_{1}, C_{2}, 1>\frac{q^{*}}{q} > 0$, this gives us a bound. To make the bound explicit in \eqref{eq:112} we can use Young with $\epsilon_{1}$ to yield,

\begin{eqnarray}
&& \frac{d}{dt}||I^{\epsilon}||^{q}_{q} + C_{1}||I^{\epsilon}||^{q}_{q}\nonumber \\
&& \leq \frac{C_{1}}{2}||I^{\epsilon}||^{q}_{q} + C_{3}|\Omega|,
\end{eqnarray}
where $C_{3}=\left(C f(\epsilon) (q \lambda K +1) \right)^{\frac{q}{q-q^{*}}}$
which yields,

\begin{equation}
 \frac{d}{dt}||I^{\epsilon}||^{q}_{q} + \frac{C_{1}}{2}||I^{\epsilon}||^{q}_{q} \leq  C_{3}|\Omega|.
\end{equation}

Via Gronwall's lemma it follows that,

\begin{equation}
||I^{\epsilon}||^{q}_{q} \leq (1-e^{-\frac{C_{1}}{2}t}) \left(\frac{2C_{3}|\Omega|}{C_{1}}\right) + e^{-\frac{C_{1}}{2}t}||I^{\epsilon}(0)||^{q}_{q} \leq  \left(\frac{2C_{3}|\Omega|}{C_{1}}\right) + e^{-\frac{C_{1}}{2}t}||I^{\epsilon}(0)||^{q}_{q}.
\end{equation}
This proves the lemma.
\end{proof}

\subsubsection{Alikakos-Moser iteration}
In this section we use the previous bounds, and devise an iteration scheme to derive the supremum bounds on the regularised solution to \eqref{eq:pde_modeld2}, from the $L^{q}(\Omega)$ bounds via lemma \ref{lem:lq1}

\begin{lemma}
\label{lem:mai}
Consider the system  \eqref{eq:pde_modeld2}. If the initial data $S_{0}(x), P_{0}(x) \in W^{1,\infty}(\Omega), I_{0}(x) \in L^{\infty}(\Omega)$, and Assumption \ref{assu2} holds true, then for any  $q \geq 1$,there exists a $C > 0$ such that for any $1>>\epsilon > 0$, we have,

\begin{equation}
   ||S^{\epsilon}||_{W^{1,\infty}(\Omega)} \leq C, \ ||I^{\epsilon}||_{L^{\infty}(\Omega)} \leq C, \
   ||P^{\epsilon}||_{L^{\infty}(\Omega)} \leq C,
\end{equation}
  for all $t \in [0, T_{max, \epsilon})$ .
\end{lemma}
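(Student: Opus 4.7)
The plan is to chain three uniform-in-$\epsilon$ bounds in sequence, leveraging the $L^q(\Omega)$ control on $I^\epsilon$ already furnished by Lemma \ref{lem:lq1}, and following the template of the proof of Lemma \ref{uniform_bounded} where possible, but supplying a dedicated Alikakos-Moser iteration to absorb the degeneracy caused by the $p$-Laplacian in the $I^\epsilon$-equation. The three steps deliver, in order, the $L^\infty$ bound on $S^\epsilon$, the $L^\infty$ bound on $I^\epsilon$, and finally the $W^{1,\infty}$ control of $S^\epsilon$ together with the $L^\infty$ bound on $P^\epsilon$.

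For the first step, since the $S^\epsilon$-equation in \eqref{eq:pde_modeld2} retains the ordinary Laplacian together with the same logistic reaction as in \eqref{eq:model1}, the parabolic comparison principle applied to the scalar super-equation yields $0 \leq S^\epsilon(x,t) \leq \max\{K, \|S_0^\epsilon\|_{L^\infty(\Omega)}\}$, verbatim from \eqref{linfty}. For the second step, I pass from $L^q$ to $L^\infty$ for $I^\epsilon$ by running a Moser iteration along $q_k = 2^k$. Starting from inequality \eqref{eq:iil} and applying the GNS inequality (Lemma \ref{lem:gns}) to $\phi_k := (I^\epsilon)^{(p+q_k-2)/p}$ with the lower-norm exponent $m_k = p q_{k-1}/(p+q_k-2)$, chosen so that $\|\phi_k\|_{m_k}^{m_k} = \|I^\epsilon\|_{q_{k-1}}^{q_{k-1}}$, and then absorbing the gradient contribution into the left-hand side via Young's inequality, one arrives at a differential inequality of the form $y_k' + c_k y_k \leq \tilde{C}_k y_{k-1}^{\alpha_k}$ for $y_k(t) := \|I^\epsilon(\cdot,t)\|_{q_k}^{q_k}$. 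A standard Moser-type iteration, after taking $q_k$-th roots, yields $\sup_k \sup_t y_k(t)^{1/q_k} < \infty$, i.e.\ the desired uniform $L^\infty$ bound. As a by-product, the $q = 2$ instance of the same identity delivers $\|\nabla I^\epsilon\|_{L^p(\Omega)} \leq C$, and since $p > 2$, also $\|\nabla I^\epsilon\|_{L^2(\Omega)} \leq C$.

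For the third step, having $S^\epsilon, I^\epsilon \in L^\infty$ uniformly in $\epsilon$ makes the reaction $f(S^\epsilon, I^\epsilon)$ in the first equation of \eqref{eq:pde_modeld2} bounded in $L^\infty(\Omega)$; the variation-of-constants formula together with Winkler's Neumann heat semigroup smoothing \cite[Lemma 1.3]{winkler2010aggregation}, applied verbatim as in \eqref{nabla_S_bound}, gives $\|\nabla S^\epsilon\|_{L^q(\Omega)} \leq C(q)$ for every $q$, and Sobolev embedding (for $q > n$) then upgrades this to $\|\nabla S^\epsilon\|_{L^\infty(\Omega)} \leq C$. For $P^\epsilon$ I reuse the scheme of Lemma \ref{uniform_bounded}: testing with $(P^\epsilon)^{q-1}$ and applying Young to the taxis contributions produces terms $\int_\Omega P^q \xi(P)^2 |\nabla S^\epsilon|^2$ and $\int_\Omega P^q \eta(P)^2 |\nabla I^\epsilon|^2$, which by Assumption \ref{assu2} (giving $\xi, \eta \leq M_1$ with support in $\{P \leq M\}$) are dominated by $M^q M_1^2\bigl(\|\nabla S^\epsilon\|_2^2 + \|\nabla I^\epsilon\|_2^2\bigr) \leq C(q)$. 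Combined with a GNS/Young absorption exactly as in \eqref{eqbdd5}, and the $L^1$ bound on $P^\epsilon$ (which follows from summing the three equations and using the $L^\infty$ bounds on $S^\epsilon, I^\epsilon$ just established), this closes the differential inequality for $\|P^\epsilon\|_q^q$; the Moser iteration of \cite[Lemma A.1]{tao2012boundedness} finally delivers $\|P^\epsilon\|_{L^\infty(\Omega)} \leq C$.

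The main obstacle is Step 2. The prefactor $p^p/(p+q_k-2)^p$ in front of the gradient term in \eqref{eq:iil} decays like $q_k^{-p}$ as $q_k \to \infty$, while the GNS exponent $\theta_k$ depends on $p, q_k, n$ in a nontrivial way. The iteration closes only if one verifies that the resulting exponents $\alpha_k$ in $y_k \leq \tilde{C}_k y_{k-1}^{\alpha_k}$ remain uniformly below $1$, and that $\sum_k q_k^{-1}\log \tilde{C}_k$ is finite. This is genuinely $p$-Laplacian-specific compared with the classical case handled in Lemma \ref{uniform_bounded}, and it is precisely here that the restriction $n \leq 2$ enters, in order to keep the GNS interpolation admissible.
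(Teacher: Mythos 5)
Your proposal follows the same architecture as the paper's proof: an Alikakos--Moser iteration along $q_k=2^k$ built on the differential inequality \eqref{eq:iil} for $I^{\epsilon}$, comparison with the logistic equation for $S^{\epsilon}$, and a rerun of the Theorem \ref{thm:main_result} machinery for $P^{\epsilon}$. The one organizational difference is in how the right-hand side of the iteration inequality is brought down to the previous level: the paper applies GNS with lower exponent $q^{*}=(q_k-1)p/(p+q_k-2)$, landing on $\|I^{\epsilon}\|_{q_k-1}^{q_k-1}$, and then invokes the interpolation Lemma \ref{lem:wlp} between $L^{q_k-1+\gamma/2}$ and $L^{(q_k-1)/2}$, absorbing the higher piece into the $\mu\|I^{\epsilon}\|_{q_k-1+\gamma}^{q_k-1+\gamma}$ term; you instead tune the GNS lower exponent to land directly on $\|I^{\epsilon}\|_{q_{k-1}}^{q_{k-1}}$, which is cleaner and does not lean on the absorption term at all. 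Two points need tightening. First, the closure condition for the iteration is not ``$\alpha_k$ uniformly below $1$'' but rather $\alpha_k q_{k-1}/q_k$ bounded away from (or telescoping below) $1$, i.e.\ $\alpha_k\leq 2$ when $q_k=2q_{k-1}$, together with $\sum_k q_k^{-1}\log\tilde C_k<\infty$; with your choice of exponents one computes $\alpha_k=2(1-\theta_k)/(1-\theta_k m/p)<2$ precisely because $m=pq_k/(p+q_k-2)<p$ when $p>2$, so the iteration does close, but the condition as you state it is the wrong one. Second, the claim that the $q=2$ instance of the energy identity delivers $\|\nabla I^{\epsilon}(\cdot,t)\|_{L^p(\Omega)}\leq C$ pointwise in time is not justified: integrating \eqref{eq:iil} yields only the time-averaged bound $\int_t^{t+1}\|\nabla I^{\epsilon}\|_{L^p(\Omega)}^p\,ds\leq C$ uniformly in $t$ (via the $L^q$ bounds of Lemma \ref{lem:lq1}). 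This still suffices for your $P^{\epsilon}$ step, since the resulting inequality $v'+cv\leq g(t)$ with $\sup_t\int_t^{t+1}g\,ds\leq C$ gives a uniform bound on $v$, but the argument must be phrased through this averaged estimate rather than a pointwise-in-time gradient bound; note that the $-\Delta I$ testing used in Lemma \ref{uniform_bounded} to control $\int_\Omega|\nabla I|^2$ is unavailable for the $p$-Laplacian, a point on which the paper's own one-line treatment of $P^{\epsilon}$ is also silent.
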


\begin{proof}
Here we multiply the $I^{\epsilon}$ equation in the case $k=1$, by $(I^{\epsilon})^{q_{k} - 1}$, and integrate by parts to yield,

\begin{eqnarray}
&& \frac{d}{dt}||I^{\epsilon}||^{q}_{q}  
 + \delta_{2} (q-1)\int_{\Omega} \left(
|\nabla I^{\epsilon}|^{2}+{\epsilon}\right)^{\frac{p-2}{2}}|\nabla I^{\epsilon}|^{2} (I^{\epsilon})^{q-2} dx
+||I^{\epsilon}||^{q-1+\gamma}_{q-1+\gamma} \nonumber \\
&& + m\int_{\Omega}  \frac{P^{\epsilon}(I^{\epsilon})^{q}}{a+I^{\epsilon}}dx =   \lambda \int_{\Omega} S^{\epsilon}|I^{\epsilon}|^{q}dx.\nonumber \\
\end{eqnarray}

Using \eqref{eq:gtl} and the estimates in lemma \ref{lem:lq1}, we obtain

\begin{eqnarray}
&& \frac{d}{dt}||I^{\epsilon}||^{q_{k}}_{q_{k}}  
 + \delta_{2} (q-1)\int_{\Omega} 
\lvert \nabla \left(I^{\epsilon}\right)^{\frac{p + q_{k} - 2}{p}} \rvert ^{p}dx \nonumber \\
&&+\mu ||I^{\epsilon}||^{q_{k}-1+\gamma}_{q_{k}-1+\gamma}
 + m\int_{\Omega}   \frac{P^{\epsilon}(I^{\epsilon})^{q_{k}}}{a+I^{\epsilon}}dx  \nonumber \\
 && =\lambda \int_{\Omega} S^{\epsilon}|I^{\epsilon}|^{q_{k}}dx.\nonumber \\
\end{eqnarray}

Now we repeat the methods of lemma \ref{lem:lq1}, to obtain

\begin{equation}
 \frac{d}{dt}||I^{\epsilon}||^{q_{k}}_{q_{k}} +  
\frac{\delta_{2} (q_{k}-1)p^{p}}{(p+q_{k}-2)^{p}}||\nabla (I^{\epsilon})^{\frac{p+q_{k}-2}{p}}||^{p}_{p} + \mu||I^{\epsilon}||^{q_{k} - 1 +\gamma}_{q_{k} - 1 + \gamma}
 \leq \lambda K||I^{\epsilon}||^{q_{k}}_{q_{k}},
\end{equation}

adding $||I^{\epsilon}||^{q_{k}}_{q_{k}}$ to both sides of the above, we obtain

\begin{equation}
 \frac{d}{dt}||I^{\epsilon}||^{q_{k}}_{q_{k}} + ||I^{\epsilon}||^{q_{k}}_{q_{k}}+  
\frac{\delta_{2} (q_{k}-1)p^{p}}{(p+q_{k}-2)^{p}}||\nabla (I^{\epsilon})^{\frac{p+q_{k}-2}{p}}||^{p}_{p} + \mu||I^{\epsilon}||^{q_{k} - 1 +\gamma}_{q_{k} - 1 + \gamma}
 \leq (1+\lambda K)||I^{\epsilon}||^{q_{k}}_{q_{k}}.
\end{equation}

Note that
\begin{equation}
\int_{\Omega} (I^{\epsilon})^{q_{k}}dx
=\int_{\Omega}\left ((I^{\epsilon})^{\frac{p+q_{k}-2}{p}} \right)^{\frac{q_{k} p }{p+q_{k}-2}}dx.
\end{equation}

Now we use GNS inequality on the function $\phi = (I)^{\frac{p+q_{k}-2}{p}}$ in lemma \ref{lem:gns}  with,

\begin{equation}
    m=\frac{q_{k} p }{p+q_{k}-2}, \ r = p, \ n=2.
\end{equation}

We need a choice of a $q^{*}$, and $\theta <1$, such that

\begin{equation}
 1 > \theta = \frac{\frac{1}{q^{*}} - \frac{p+q_{k}-2}{p q_{k} }}{\frac{1}{q^{*}} - \frac{1}{p} + \frac{1}{2}}.
\end{equation}
This is always possible since $p>2$, by applying GNS

\begin{equation}
  \int_{\Omega}\left ((I^{\epsilon})^{\frac{p + q_{k} - 2}{p}} \right)^{\frac{q_{k} p }{p + q_{k} - 2}}dx \leq \left (\lvert \lvert\nabla (I^{\epsilon})^{\frac{p + q_{k} - 2}{p}}\rvert \rvert^{p}_{p} \right)^{\theta} \left( \lvert \lvert (I^{\epsilon})^{\frac{p + q_{k} - 2}{p}}\rvert \rvert^{q^{*}}_{q^{*}} \right)^{1-\theta},
\end{equation}

with a choice of $q^{*} = \frac{(q_{k}-1) p }{p+q_{k}-2} > 0$. 
Thus by using Holder-Young with $\epsilon_{1}$, with exponents $p^{'},q^{'}$ where $p^{'}=\frac{p}{\theta}, q^{'}=\frac{q^{*}}{1-\theta}$, and $\epsilon_{1}= \frac{\delta_{2}(q_{k}-1)p^{p}}{(p+q_{k}-2)^{p}}$, yields

\begin{eqnarray}
&& \frac{d}{dt}||I^{\epsilon}||^{q_{k}}_{q_{k}} + ||I^{\epsilon}||^{q_{k}}_{q_{k}}+ \mu||I^{\epsilon}||^{q_{k} - 1 +\gamma}_{q_{k} - 1 + \gamma} +
\frac{\delta_{2}(q_{k}-1)p^{p}}{(p+q_{k}-2)^{p}}||\nabla (I^{\epsilon})^{\frac{p+q_{k}-2}{p}}||^{p}_{p} \nonumber \\
 && \leq \frac{\delta_{2}(q_{k}-1)p^{p}}{(p+q_{k}-2)^{p}}  ||\nabla (I^{\epsilon})^{\frac{p+q_{k}-2}{p}}||^{p}_{p} + f(\epsilon_{1})(\lambda K+1)   ||I^{\epsilon}||^{q_{k}-1}_{q_{k}-1}. 
\end{eqnarray}

Next, we use Lemma \ref{lem:wlp} with $p_{0}=\frac{q_{k}-1}{2}, p_{1}=q_{k}-1+\frac{\gamma}{2}$, and a choice of $\theta$ such that $\frac{\theta}{\frac{q_{k}-1}{2}} + \frac{1-\theta}{q_{k}-1+\frac{\gamma}{2}} = \frac{1}{q_{k} - 1}$, to obtain

\begin{equation}
    ||I^{\epsilon}||^{q_{k}-1}_{q_{k}-1} \leq C ||I^{\epsilon}||^{(\theta)(q_{k}-1)}_{q_{k}-1+\frac{\gamma}{2}} 
    ||I^{\epsilon}||^{(1-\theta)(q_{k}-1)}_{\frac{q_{k}-1}{2}} = \left(||I^{\epsilon}||^{q_{k}-1+\frac{\gamma}{2}}_{q_{k}-1+\frac{\gamma}{2}} \right)^{\frac{(\theta)(q_{k}-1)}{q_{k}-1+\frac{\gamma}{2}}} \left(||I^{\epsilon} ||^{\frac{q_{k}-1}{2}}_{\frac{q_{k}-1}{2}} \right)^{2(1-\theta)}.
\end{equation}

Here the $C=\frac{C}{f(\epsilon) (\lambda K+1)}$ above. Also the inequality above follows via Holder-Young with $\epsilon_{1}$ with
$p=\frac{q_{k}-1+\frac{\gamma}{2}}{(\theta)(q_{k}-1)}$, $q=\frac{p}{p-1}$, yields

\begin{equation}
||I^{\epsilon}||^{q_{k}-1}_{q_{k}-1} \leq C||I^{\epsilon}||^{q_{k}-1+\frac{\gamma}{2}}_{q_{k}-1+\frac{\gamma}{2}} + \left(||I^{\epsilon} ||^{\frac{q_{k}-1}{2}}_{\frac{q_{k}-1}{2}} \right)^{\frac{\left( \frac{(2(1-\theta))(q_{k}-1+\frac{\gamma}{2})}{(\theta)(q_{k}-1)}\right)}{ \left(\frac{q_{k}-1+\frac{\gamma}{2}}{(\theta)(q_{k}-1)} -1  \right)}}.
\end{equation}

Lets call $p^{**} = \frac{\left( \frac{(2(1-\theta))(q_{k}-1+\frac{\gamma}{2})}{(\theta)(q_{k}-1)}\right)}{ \left(\frac{q_{k}-1+\frac{\gamma}{2}}{(\theta)(q_{k}-1)} -1  \right)} < \frac{\left( \frac{(2)(q_{k}-1+\frac{\gamma}{2})}{(\theta)(q_{k}-1)}\right)}{ \left(\frac{q_{k}-1+\frac{\gamma}{2}}{(\theta)(q_{k}-1)} -1  \right)}$.

Now via the standard embedding of $L^{q-1+\gamma}(\Omega)   \hookrightarrow L^{q - 1 + \frac{\gamma}{2}}(\Omega)   \hookrightarrow L^{q-1}(\Omega)$, and by the prescribed choice of $p^{**}$, since $p > 2 $, we ultimately have

\begin{equation}
\label{eq:1q1}
 \frac{d}{dt}||I^{\epsilon}||^{q_{k}}_{q_{k}} + ||I^{\epsilon}||^{q_{k}}_{q_{k}}+
+\mu||I^{\epsilon}||^{q_{k} - 1 +\gamma}_{q_{k} - 1 + \gamma} 
 \leq \frac{\mu}{2}\int_{\Omega} (I^{\epsilon})^{q_{k}-1+\gamma}dx  + C_{1}
+\left( ||I^{\epsilon}||^{\frac{q_{k}-1}{2}}_{\frac{q_{k}-1}{2}} \right)^{\left(\frac{p^{**}}{p^{**}-1}\right)}. 
\end{equation}

 Next we make a choice of $q_{k} = 2^{k}$, and we define


\begin{equation}
M_{K} = \max \left\{||I_{0}||_{\infty}, \sup_{t \in (0,T)} \int_{\Omega} |I^{\epsilon}|^{q_{k}} dx \right \} \ \mbox{for fixed} \ T \in (0,T_{max, \epsilon}).
\end{equation}

From here it is immediate that

\begin{equation}
M_{K} \leq ||I_{0}||^{q_{k}}_{q_{k}} + C_{2} (M_{K-1})^{\frac{f(K)}{f(K)-1}} + C_{1},
\end{equation}

where $f(k) = \frac{\left( \frac{(2)(2^{k}-1+\frac{\gamma}{2})}{(\theta)(2^{k}-1)}\right)}{ \left(\frac{2^{k}-1+\frac{\gamma}{2}}{(\theta)(2^{k}-1)} -1  \right)} $
and so via a recursion, and taking the limit as $k \rightarrow \infty$, yields,

\begin{eqnarray}
&&||I^{\epsilon}||_{\infty}  \nonumber \\
&\leq& \limsup_{k \rightarrow \infty} (M_{K})^{\frac{1}{q_{k}}} \nonumber \\
&\leq& 
(||I_{0}||^{q_{k}}_{q_{k}} + C_{2} (M_{K-1})^{\frac{f(K)}{f(K)-1}} + C_{1})^{\frac{1}{q_{k}}}  \nonumber \\
&\leq &
||I_{0}||_{\infty} + C_{1}, \nonumber \\
\end{eqnarray}

or recursively, $(M_{K})^{\frac{1}{q_{k}}} \leq (M_{0})^{2^{k}}$. The $L^{\infty}(\Omega)$ bound on $I^{\epsilon}$ follows. For the $S_{\epsilon}$ component, via simple comparison, we have

\begin{equation}
    S^{\epsilon}_t \leq \delta_1 \Delta S^{\epsilon}  +  rS^{\epsilon}\Big(1-\frac{(S^{\epsilon})}{K}\Big).
\end{equation}

Thus via standard theory of the logistic equation, the $W^{1,\infty}(\Omega)$ bound on $S_{\epsilon}$ follows. The $L^{\infty}(\Omega)$ bound on $P^{\epsilon}$ follows via applying the same methodology of Theorem \ref{thm:main_result} to the regularised system \eqref{eq:pde_modeld2}. 
Thus the lemma is proved.

\end{proof}

\begin{remark}
    In the above, we assume,
    
    \begin{equation}
        \left(M_{K-1}\right)^{^{\left(\frac{p^{**}}{p^{**}-1}\right)} } \leq \left(M_{K-1}\right)^{^{\left(\frac{f(K)}{f(K)-1}\right)}},
    \end{equation}
    under the assumption that,
    $||I^{\epsilon}||^{\frac{q_{k-1}}{2}}_{\frac{q_{k-1}}{2}} \geq 1$, else it is already bounded to begin with, and there is nothing to prove. Even if we are in this case the estimates in \eqref{eq:1q1} still hold, with a $C$ in front of the $||I^{\epsilon}||^{\frac{q_{k-1}}{2}}_{\frac{q_{k-1}}{2}} $ term, to scale appropriately. 
\end{remark}

\subsection{Global Existence of Weak Solutions}

In this section we prove global existence of weak solutions to \eqref{eq:pde_modeld2}. Herein we have to derive estimates uniform in the parameter $\epsilon$.

\begin{lemma}
\label{lem:es11}
    Consider the system \eqref{eq:pde_modeld2}. If the initial data $S_{0}(x), P_{0}(x) \in W^{1,\infty}(\Omega), I_{0}(x) \in L^{\infty}(\Omega)$, then for any $1>>\epsilon > 0$, there exists a constant $C$, depending on the problem parameters, but independent of $\epsilon$, such that we have

\begin{equation}
\label{eq:l11n}
    ||S^{\epsilon}||_{W^{1,\infty}(\Omega)} \leq C,
\end{equation}

\begin{equation}
\label{eq:l12n}
    ||I^{\epsilon}||_{L^{\infty}(\Omega)} \leq C,
\end{equation}

\begin{equation}
\label{eq:l13n}
    ||P^{\epsilon}||_{L^{\infty}(\Omega)} \leq C,
\end{equation}

\begin{equation}
\label{eq:l14n}
\int^{t}_{0}\int_{\Omega} 
|\nabla I^{\epsilon}(.,s)|^{p}(I^{\epsilon}(.,s))^{q-2} dx ds \leq C, \ p,q >2,
\end{equation}

\begin{equation}
\label{eq:l15n}
\int^{t}_{0}\int_{\Omega} 
|\nabla I^{\epsilon}(.,s)|^{p} dx ds \leq C.
\end{equation}
\end{lemma}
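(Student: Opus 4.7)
The plan is to revisit the arguments of Lemma \ref{lem:lq1} and Lemma \ref{lem:mai}, verify that every constant produced there is independent of the regularization parameter $\epsilon$, and then obtain the spacetime bounds \eqref{eq:l14n}--\eqref{eq:l15n} by integrating the differential inequality \eqref{eq:iil} in time.

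First I would address \eqref{eq:l12n}. The only place the regularization enters the Moser iteration of Lemma \ref{lem:mai} is through the pointwise inequality $(|\nabla I^\epsilon|^2 + \epsilon)^{(p-2)/2} |\nabla I^\epsilon|^2 \geq |\nabla I^\epsilon|^p$, which holds for every $\epsilon > 0$ when $p \geq 2$ by Lemma \ref{lem:pc1}. All other constants originate from the GNS and Young inequalities, $|\Omega|$, $K$, and the problem parameters, so the recursion produces an $\epsilon$-independent $L^\infty$ bound on $I^\epsilon$. Once \eqref{eq:l12n} is in hand, \eqref{eq:l11n} follows because the $S^\epsilon$ equation is a standard semilinear heat equation with a bounded source: parabolic comparison gives $0 \leq S^\epsilon \leq K$, and the Neumann heat semigroup estimate \eqref{nabla_S_bound}, applied with $f(S^\epsilon, I^\epsilon)$ uniformly bounded in $L^\infty$, delivers the $W^{1,\infty}$ bound. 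For \eqref{eq:l13n} I would reproduce the $P$-equation analysis of Lemma \ref{uniform_bounded}, controlling the taxis contributions via $\xi(P^\epsilon), \eta(P^\epsilon) \leq M_1$ from Assumption \ref{assu2} and using the already established uniform $L^\infty$ bounds on $S^\epsilon, I^\epsilon$ to close the $|\nabla S^\epsilon|^2$ and $|\nabla I^\epsilon|^2$ energy estimates.

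Next, for \eqref{eq:l14n} and \eqref{eq:l15n}, I would integrate \eqref{eq:iil} over $[0,t]$, discard the nonnegative mortality term on the left, and invoke \eqref{eq:l12n} to control the right-hand side uniformly in $\epsilon$, obtaining
\begin{equation*}
\frac{p^p \delta_2 q(q-1)}{(p+q-2)^p} \int_0^t \bigl\lVert \nabla (I^\epsilon)^{\frac{p+q-2}{p}} \bigr\rVert_p^p \, ds \leq \lVert I_0 \rVert_q^q + C\, t
\end{equation*}
for every $q \geq 2$ and every finite $t$. The identity \eqref{eq:gtl} converts the left-hand side back into $\int_0^t \int_\Omega |\nabla I^\epsilon|^p (I^\epsilon)^{q-2}\, dx\, ds$, which is precisely \eqref{eq:l14n}. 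Specializing to $q = 2$ makes $(I^\epsilon)^{q-2} \equiv 1$ and yields \eqref{eq:l15n} immediately.

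The principal obstacle is ensuring that \eqref{eq:l13n} is genuinely $\epsilon$-independent: the proof of Lemma \ref{uniform_bounded} used a pointwise-in-time $L^2$ bound on $\nabla I$ obtained by testing the $I$-equation with $-\Delta I$, and this step is unavailable for the regularized $p$-Laplacian with $p > 2$. To circumvent this I would exploit the monotonicity of the regularized $p$-Laplacian (third inequality of Lemma \ref{lem:pc1}) to recover a nonnegative dissipative term replacing the $\delta_2 \int |\Delta I|^2$ contribution, and, in dimension $n \leq 2$, invoke the Sobolev embedding $W^{1,p}(\Omega) \hookrightarrow C^0(\overline{\Omega})$ (valid for $p > 2$) together with \eqref{eq:l14n} to supply the pointwise-in-time control of $\nabla I^\epsilon$ in $L^2(\Omega)$ that the $P^\epsilon$ iteration requires. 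With these ingredients, the Moser argument for $P^\epsilon$ proceeds exactly as in the $p = 2$ case, and all constants depend only on the problem parameters, the initial data, and $|\Omega|$.
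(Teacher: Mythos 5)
Your proposal follows essentially the same route as the paper: the sup-norm bounds \eqref{eq:l11n}--\eqref{eq:l13n} are quoted from Lemma \ref{lem:mai}, and the spacetime gradient bounds \eqref{eq:l14n}--\eqref{eq:l15n} are obtained by integrating \eqref{eq:iil} over $[0,t]$ and setting $q=2$, exactly as in the paper's (much terser) proof. Your extra discussion of how to close the $P^{\epsilon}$ estimate without testing the regularized $I^{\epsilon}$-equation against $-\Delta I^{\epsilon}$ goes beyond what the paper records and flags a real subtlety in Lemma \ref{lem:mai}, though note that \eqref{eq:l14n} alone gives only time-integrated, not pointwise-in-time, control of $\nabla I^{\epsilon}$ in $L^{2}(\Omega)$, so that step of your workaround would need further justification.
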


\begin{proof}
Estimates \eqref{eq:l11n}-\eqref{eq:l13n} follow directly from lemma \ref{lem:mai}. Integration of \eqref{eq:iil} in time from $[0,t]$, the embedding of $L^{\infty}(\Omega) \hookrightarrow L^{q}(\Omega)$ and the direct use of lemma \ref{lem:mai} again yields \eqref{eq:l14n}. Setting $q=2$ therein yields \eqref{eq:l15n}.
\end{proof}

\begin{theorem}
\label{thm:es22}
    Consider the system \eqref{eq:pde_modeld2}. If the initial data $(S_{0}(x), P_{0}(x)) \in W^{1,\infty}(\Omega), I_{0}(x) \in L^{\infty}(\Omega)$, then there exists a function $I \in L^{p}_{loc}((0,\infty), W^{1,p}(\Omega)) \cap L^{\infty}((0,\infty);L^{\infty}(\Omega))$ and $(S,P) \in  L^{2}_{loc}((0,\infty), W^{1,2}(\Omega)) \cap L^{\infty}((0,\infty);L^{\infty}(\Omega))$  as well as a $\Gamma \in L^{\frac{p}{p-1}}_{loc}((0,\infty);L^{\frac{p}{p-1}}(\Omega)) \cap L^{\infty}((0,\infty);L^{\infty}(\Omega))$, and a sequence of approximants $\epsilon = \epsilon_{j} \searrow 0$, such that

\begin{equation}
 S^{\epsilon} \rightharpoonup  S^{*}  \ \mbox{in} \ L^{2}_{loc}((0,\infty);W^{1,2}(\Omega))
\end{equation}

    \begin{equation}
    S^{\epsilon} \overset{*}{\rightharpoonup} S^{*} \ \mbox{in} \ L^{\infty}((0,\infty);L^{\infty}(\Omega))
\end{equation}

\begin{equation}
\nabla S^{\epsilon} \overset{*}{\rightharpoonup} \nabla S^{*}  \ \mbox{in} \ L^{\infty}((0,\infty);L^{\infty}(\Omega))
\end{equation}

\begin{equation}
 P^{\epsilon} \rightharpoonup  P^{*}  \ \mbox{in} \ L^{2}_{loc}((0,\infty);W^{1,2}(\Omega))
\end{equation}

 \begin{equation}
P^{\epsilon} \overset{*}{\rightharpoonup} P^{*} \ \mbox{in} \ L^{\infty}((0,\infty);L^{\infty}(\Omega))
\end{equation}



 \begin{equation}
I^{\epsilon} \overset{*}{\rightharpoonup} I^{*}  \ \mbox{in} \ L^{\infty}((0,\infty);L^{\infty}(\Omega))
\end{equation}

 \begin{equation}
\nabla I^{\epsilon} \rightharpoonup \nabla I^{*}  \ \mbox{in} \ L^{p}_{loc}((0,\infty);L^{p}(\Omega))
\end{equation}

 \begin{equation}
 \label{eq:nlt}
|\nabla I^{\epsilon}|^{p-2}\nabla I^{\epsilon} \rightharpoonup  \Gamma \ \mbox{in} \ L^{\frac{p}{p-1}}_{loc}((0,\infty);L^{\frac{p}{p-1}}(\Omega)).
\end{equation}

\end{theorem}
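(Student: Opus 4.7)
The plan is to extract each claimed weak or weak-$*$ limit separately via Banach--Alaoglu from the uniform-in-$\epsilon$ bounds already provided by Lemma \ref{lem:es11} (supplemented by one additional energy estimate for $\nabla P^\epsilon$), and then to unify them into a single null sequence $\epsilon_j \searrow 0$ by a Cantor diagonal extraction over a countable exhaustion $(0,T_k)\nearrow(0,\infty)$.

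For the limits that are essentially immediate: estimates \eqref{eq:l11n}--\eqref{eq:l13n} make $\{S^\epsilon\},\{\nabla S^\epsilon\},\{I^\epsilon\},\{P^\epsilon\}$ bounded in $L^\infty((0,\infty)\times\Omega)$, which, as the dual of the separable space $L^1((0,\infty)\times\Omega)$, has weak-$*$ sequentially compact closed balls, and \eqref{eq:l15n} puts $\nabla I^\epsilon$ into the reflexive space $L^p_{loc}((0,\infty);L^p(\Omega))$. For the $p$-Laplacian flux, a direct computation
\begin{equation}
\int_0^T\!\!\int_\Omega \bigl||\nabla I^\epsilon|^{p-2}\nabla I^\epsilon\bigr|^{p/(p-1)}\,dx\,dt = \int_0^T\!\!\int_\Omega |\nabla I^\epsilon|^p\,dx\,dt \leq C(T)
\end{equation}
shows the flux is bounded in $L^{p/(p-1)}_{loc}((0,\infty);L^{p/(p-1)}(\Omega))$, also reflexive, so a weak limit $\Gamma$ exists. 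The respective limits $S^*,I^*,P^*$ follow, and consistency of the limits of $S^\epsilon,\nabla S^\epsilon$ (and likewise $I^\epsilon,\nabla I^\epsilon$) is verified by testing against $\varphi\in C_c^\infty$ and integrating by parts. The $L^2_{loc}((0,\infty);W^{1,2}(\Omega))$ convergence of $S^\epsilon$ is then subsumed by the stronger $L^\infty W^{1,\infty}$ bound via the continuous embedding on bounded time intervals (since $|\Omega|<\infty$).

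The one bound not contained in Lemma \ref{lem:es11} is $\nabla P^\epsilon\in L^2_{loc}((0,\infty);L^2(\Omega))$ uniformly in $\epsilon$, and this is where I expect most of the actual work. I would derive it by testing the $P^\epsilon$ equation against $P^\epsilon$ to obtain
\begin{equation}
\tfrac{1}{2}\tfrac{d}{dt}\|P^\epsilon\|_2^2 + \delta_3\|\nabla P^\epsilon\|_2^2 = \chi_1\!\!\int_\Omega\!\xi(P^\epsilon)P^\epsilon\nabla S^\epsilon\cdot\nabla P^\epsilon + \chi_2\!\!\int_\Omega\!\eta(P^\epsilon)P^\epsilon\nabla I^\epsilon\cdot\nabla P^\epsilon + R(S^\epsilon,I^\epsilon,P^\epsilon),
\end{equation}
where the reaction remainder $R$ is bounded by the $L^\infty$ controls from Lemma \ref{lem:es11}. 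Young's inequality absorbs a fraction of $\|\nabla P^\epsilon\|_2^2$ into the left-hand side, and the residual cross terms $\int(\xi P^\epsilon)^2|\nabla S^\epsilon|^2$ and $\int(\eta P^\epsilon)^2|\nabla I^\epsilon|^2$ are handled using Assumption \ref{assu2} (uniform bounds on $\xi,\eta$), the uniform $L^\infty$ bounds on $P^\epsilon$ and $\nabla S^\epsilon$, and, for the $\nabla I^\epsilon$ piece, Hölder in time together with \eqref{eq:l15n} to pass from $L^p_tL^p_x$ into $L^2_tL^2_x$ on any bounded interval. Gronwall then yields $\|\nabla P^\epsilon\|_{L^2((0,T);L^2(\Omega))}\leq C(T)$ uniformly in $\epsilon$, and reflexivity closes the argument.

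The main obstacle is precisely this cross-term estimate for $\nabla P^\epsilon$: it is the only place where the three species interact nonlinearly through the taxis fluxes in a way not already absorbed into Lemma \ref{lem:es11}. Note finally that the theorem does \emph{not} claim $\Gamma=|\nabla I^*|^{p-2}\nabla I^*$; that identification requires a Minty--Browder monotonicity trick built on Lemma \ref{lem:pc1} and strong $L^p$ compactness of $\nabla I^\epsilon$, which is a separate task (presumably addressed when verifying the weak formulation) and is deliberately left out of the current statement.
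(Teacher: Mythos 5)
Your proposal is correct and follows essentially the same route as the paper: the paper's proof consists of exactly your first two ingredients, namely weak/weak-$*$ compactness from the uniform bounds of Lemma \ref{lem:es11} and the identity $\bigl||\nabla I^{\epsilon}|^{p-2}\nabla I^{\epsilon}\bigr|^{p/(p-1)}=|\nabla I^{\epsilon}|^{p}$ combined with \eqref{eq:l15n} to bound the flux in $L^{p/(p-1)}_{loc}$. The one place where you go beyond the paper is worth flagging: Lemma \ref{lem:es11} controls only $\|P^{\epsilon}\|_{L^{\infty}(\Omega)}$ and says nothing about $\nabla P^{\epsilon}$, so the claimed weak convergence of $P^{\epsilon}$ in $L^{2}_{loc}((0,\infty);W^{1,2}(\Omega))$ does not literally ``follow directly'' as the paper asserts; your energy estimate (testing the $P^{\epsilon}$ equation against $P^{\epsilon}$, absorbing via Young, using Assumption \ref{assu2} together with the uniform $L^{\infty}$ bounds on $P^{\epsilon}$ and $\nabla S^{\epsilon}$, and passing from $L^{p}_{t,x}$ to $L^{2}_{t,x}$ for $\nabla I^{\epsilon}$ on bounded intervals) is exactly the missing step, and it closes the gap. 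Your closing remark that the theorem does not identify $\Gamma$ with $|\nabla I^{*}|^{p-2}\nabla I^{*}$ is also consistent with the paper, which defers that identification to Lemmas \ref{lem:cnt} and \ref{lem:cnt2i}.
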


\begin{proof}
    These estimates follow directly from the estimates in lemma \ref{lem:es11}. For the convergence of the nonlinear term, $\int^{t}_{0}\int_{\Omega} 
|\nabla I^{\epsilon}(.,s)|^{p-2}(\nabla I^{\epsilon}(.,s))|^{\frac{p}{p-1}} dx ds = \int^{t}_{0}\int_{\Omega} 
|\nabla I^{\epsilon}(.,s)|^{p}dxds \leq C$. Thus equation \eqref{eq:nlt} follows.
\end{proof}
Next, we state a result about the time derivative of the solution.

\begin{theorem}
\label{thm:td1}
    Consider the system \eqref{eq:pde_modeld2}. If the initial data $S_{0}(x),P_{0}(x) \in W^{1,\infty}(\Omega), I_{0}(x) \in L^{\infty}(\Omega)$, then there exists a constant $C$ such that for any time $T > 0$, we have

     \begin{equation} 
\left \lVert \frac{\partial I^{\epsilon}}{\partial t}\right \rVert_{L^{1}((0,T); (W^{1,p}(\Omega))^{*})} \leq C.
\end{equation}

\end{theorem}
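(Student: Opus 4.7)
The plan is to test the $I^{\epsilon}$ equation against an arbitrary $\phi\in W^{1,p}(\Omega)$, integrate by parts, and bound each resulting term by a constant times $\|\phi\|_{W^{1,p}(\Omega)}$ using the uniform estimates already available from Lemma \ref{lem:es11}. In view of the weak formulation in Definition \ref{def:d1}, this gives a pointwise-in-time bound on $\|\partial_t I^{\epsilon}(\cdot,t)\|_{(W^{1,p}(\Omega))^{*}}$, which we then integrate over $(0,T)$.

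Concretely, after multiplying the $I^{\epsilon}$ equation by $\phi$ and integrating, we obtain
\begin{equation*}
\left\langle \partial_t I^{\epsilon}, \phi\right\rangle = -\delta_{2}\!\int_{\Omega}\!(|\nabla I^{\epsilon}|^{2}+\epsilon)^{\frac{p-2}{2}}\nabla I^{\epsilon}\cdot\nabla\phi\, dx + \int_{\Omega}\!\Big(\lambda S^{\epsilon} I^{\epsilon} - \tfrac{m P^{\epsilon} I^{\epsilon}}{a+I^{\epsilon}} - \mu (I^{\epsilon})^{\gamma}\Big)\phi\, dx.
\end{equation*}
For the diffusive contribution, the second estimate in Lemma \ref{lem:pc1} yields the pointwise bound $(|\nabla I^{\epsilon}|^{2}+\epsilon)^{\frac{p-2}{2}}|\nabla I^{\epsilon}|\leq C(|\nabla I^{\epsilon}|^{p-1}+1)$, with $C$ independent of $\epsilon$. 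Applying H\"older with exponents $\frac{p}{p-1}$ and $p$ gives
\begin{equation*}
\left|\delta_{2}\!\int_{\Omega}(|\nabla I^{\epsilon}|^{2}+\epsilon)^{\frac{p-2}{2}}\nabla I^{\epsilon}\cdot\nabla\phi\, dx\right|\leq C\bigl(\|\nabla I^{\epsilon}\|_{L^{p}(\Omega)}^{p-1}+|\Omega|^{\frac{p-1}{p}}\bigr)\|\nabla\phi\|_{L^{p}(\Omega)}.
\end{equation*}
For the reaction terms, the uniform $L^{\infty}$ bounds \eqref{eq:l11n}--\eqref{eq:l13n} control the integrand in $L^{\infty}(\Omega)$, and since $p>n$ with $n\in\{1,2\}$, the Sobolev embedding $W^{1,p}(\Omega)\hookrightarrow L^{\infty}(\Omega)\hookrightarrow L^{1}(\Omega)$ lets us absorb $\phi$ into $\|\phi\|_{W^{1,p}(\Omega)}$. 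Combining these estimates gives, uniformly in $\epsilon$,
\begin{equation*}
\bigl\|\partial_t I^{\epsilon}(\cdot,t)\bigr\|_{(W^{1,p}(\Omega))^{*}} \leq C\bigl(\|\nabla I^{\epsilon}(\cdot,t)\|_{L^{p}(\Omega)}^{p-1}+1\bigr).
\end{equation*}

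Integrating in $t\in(0,T)$ and applying H\"older once more with exponents $\frac{p}{p-1}$ and $p$ in time yields
\begin{equation*}
\int_{0}^{T}\bigl\|\partial_t I^{\epsilon}\bigr\|_{(W^{1,p}(\Omega))^{*}}\, dt \leq C\,T^{\frac{1}{p}}\!\left(\int_{0}^{T}\!\|\nabla I^{\epsilon}\|_{L^{p}(\Omega)}^{p}\, dt\right)^{\!\frac{p-1}{p}}+ CT,
\end{equation*}
and the right-hand side is bounded by a constant depending only on $T$ and the problem data by virtue of \eqref{eq:l15n}. The expected main obstacle is the degenerate/regularized diffusion term: everything else reduces to plugging in $L^{\infty}$ bounds, but here we need a bound on the $p$-Laplacian-type flux that is uniform in $\epsilon$ and dual-compatible with $\nabla\phi\in L^{p}(\Omega)$. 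Lemma \ref{lem:pc1} is precisely tailored for this, and together with estimate \eqref{eq:l15n} it closes the argument.
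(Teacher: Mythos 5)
Your proposal is correct and follows essentially the same route as the paper: pair $\partial_t I^{\epsilon}$ with a $W^{1,p}$ test function, control the regularized flux via the pointwise bound $(|\nabla I^{\epsilon}|^{2}+\epsilon)^{\frac{p-2}{2}}|\nabla I^{\epsilon}|\leq C(|\nabla I^{\epsilon}|^{p-1}+1)$ from Lemma \ref{lem:pc1}, use the uniform $L^{\infty}$ bounds for the reaction terms, and close with the space--time gradient estimate \eqref{eq:l15n}. Your handling of the final step (H\"older in time applied to $\|\nabla I^{\epsilon}\|_{L^{p}}^{p-1}$ rather than a pointwise-in-time Young splitting) is in fact slightly more careful than the paper's, since only the time-integrated gradient bound is available, but it is the same argument in substance.
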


\begin{proof}
We consider a test function $\zeta \in C^{\infty}_{0}(\Omega)$ such that $||\zeta||_{W^{1,p}(\Omega)} \leq 1$, note that

 \begin{equation} 
\left \lVert \frac{\partial I^{\epsilon}}{\partial t}\right \rVert_{L^{1}((0,T); (W^{1,p}(\Omega))^{*})} = \int^{T}_{0}\left( \sup_{\zeta \in C^{\infty}_{0}(\Omega),||\zeta||_{W^{1,p}(\Omega)} \leq 1 } \int_{\Omega} \frac{\partial I^{\epsilon}}{\partial t} \zeta dx\right) dt.
\end{equation}

We now consider

\begin{eqnarray}
    && \int_{\Omega} \frac{\partial I^{\epsilon}}{\partial t} \zeta dx \nonumber \\
    && = \ \int_{\Omega} [\nabla \cdot a(\nabla I^{\epsilon})  + \lambda S^{\epsilon}I^{\epsilon} - \frac{mP^{\epsilon}I^{\epsilon}}{a+I^{\epsilon}} -\mu (I^{\epsilon})^{\gamma}] \zeta dx \nonumber \\
    && = \int_{\Omega} [\nabla \cdot \Big(\delta_2   \left(|\nabla I^{\epsilon}|^{2}+{\epsilon}\right)^{\frac{p-2}{2}}  \nabla I^{\epsilon} \Big)   + \lambda S^{\epsilon}I^{\epsilon} - \frac{mP^{\epsilon}I^{\epsilon}}{a+I^{\epsilon}} -\mu (I^{\epsilon})^{\gamma} ]\zeta dx \nonumber \\
    && \leq  \lambda K \int_{\Omega} | \zeta| |I^{\epsilon}| dx + m\int_{\Omega} | \zeta| |P^{\epsilon}| dx + \mu\int_{\Omega} | \zeta| |(I^{\epsilon})^{\gamma}| dx  \nonumber \\
    && + \int_{\Omega}\Big(\delta_{2}   \left(|\nabla I^{\epsilon}|^{2}+{\epsilon}\right)^{\frac{p-2}{2}}  \nabla I^{\epsilon} \Big) \nabla \zeta dx \nonumber \\
    &&  \leq \lambda K ||\zeta||_{\infty}\int_{\Omega}  |I^{\epsilon}| dx + m||\zeta||_{\infty}\int_{\Omega} |P^{\epsilon}| dx + \mu ||\zeta||_{\infty} \int_{\Omega}  |I^{\epsilon}|^{\gamma} dx  \nonumber \\
    &&+ \delta_2 \int_{\Omega}  \left(|\nabla I^{\epsilon}|^{p-1}+1\right) |\nabla \zeta| dx \nonumber \\
    && \leq \lambda K ||\zeta||_{\infty}\int_{\Omega}  |I^{\epsilon}| dx + m||\zeta||_{\infty}\int_{\Omega} |P^{\epsilon}| dx + \mu ||\zeta||_{\infty} \int_{\Omega}  |I^{\epsilon}|^{\gamma} dx  \nonumber \\
    &&+ C_{1} \int_{\Omega}  |\nabla I^{\epsilon}|^{p}  dx +  C_{2} \int_{\Omega}  |\nabla \zeta|^{p}  dx + C_{3}\nonumber \\
    && \leq C.
\end{eqnarray}

This follows via application of Holder-Young on the $ \int_{\Omega}  \left(|\nabla I^{\epsilon}|^{p-1}+1\right) |\nabla \zeta| dx$ term, and  uniform estimates in Lemma \ref{lem:es11}.
\end{proof}

\begin{lemma}
\label{lem:ala1}
Consider the system \eqref{eq:pde_modeld2}, with $ 2 < p $, there exists a subsequence ${\epsilon_{j}} \rightarrow \epsilon$ as $j \rightarrow \infty$, such that
 \begin{equation}
I^{\epsilon_{j}} \rightarrow I^{*} \ \mbox{in} \  L^{p}([0,T];L^{p}(\Omega)) \ \mbox{as}, \ j \rightarrow \infty
\end{equation}
\end{lemma}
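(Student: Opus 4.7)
The strategy is to apply the Aubin--Lions compactness lemma (Lemma \ref{lem:al}) with the triple
\[
X_{0} = W^{1,p}(\Omega), \qquad X = L^{p}(\Omega), \qquad X_{1} = (W^{1,p}(\Omega))^{*},
\]
for which we need to verify the two embeddings and produce the appropriate Bochner bounds on $I^{\epsilon}$ and on $\partial_t I^{\epsilon}$, uniform in $\epsilon$.

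First I would verify the embedding chain. Since $n \in \{1,2\}$ and $p > 2 \geq n$, the Rellich--Kondrachov theorem gives the compact embedding $W^{1,p}(\Omega) \hookrightarrow\hookrightarrow C(\overline{\Omega}) \hookrightarrow L^{p}(\Omega)$, so $X_{0} \hookrightarrow\hookrightarrow X$. The second embedding $L^{p}(\Omega) \hookrightarrow (W^{1,p}(\Omega))^{*}$ is continuous (duality, combined with $W^{1,p} \hookrightarrow L^{p/(p-1)}$ continuously for $n \leq 2$).

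Next I would collect the required uniform-in-$\epsilon$ bounds. The $L^{\infty}$ estimate \eqref{eq:l12n} and the gradient estimate \eqref{eq:l15n} from Lemma \ref{lem:es11} jointly give
\[
\|I^{\epsilon}\|_{L^{p}((0,T);W^{1,p}(\Omega))}^{p} = \int_{0}^{T}\!\!\int_{\Omega} |I^{\epsilon}|^{p}\,dx\,ds + \int_{0}^{T}\!\!\int_{\Omega} |\nabla I^{\epsilon}|^{p}\,dx\,ds \leq C,
\]
so $\{I^{\epsilon}\}$ is bounded in $L^{p}((0,T);X_{0})$. Theorem \ref{thm:td1} simultaneously provides the bound $\|\partial_t I^{\epsilon}\|_{L^{1}((0,T);X_{1})} \leq C$, uniform in $\epsilon$.

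Setting $p' = p < \infty$ and $q' = 1$ in Lemma \ref{lem:al}, the space $W = \{u \in L^{p}([0,T];X_{0}) : u' \in L^{1}([0,T];X_{1})\}$ embeds compactly into $L^{p}([0,T];L^{p}(\Omega))$. The family $\{I^{\epsilon}\}$ lies in a bounded subset of $W$, so by compactness we can extract a subsequence $\epsilon_{j} \searrow 0$ along which $I^{\epsilon_{j}} \to I^{*}$ strongly in $L^{p}([0,T];L^{p}(\Omega))$. The limit $I^{*}$ agrees with the weak-$*$ limit identified in Theorem \ref{thm:es22} by uniqueness of limits in the sense of distributions, completing the proof.

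The main obstacle I anticipate is purely bookkeeping: making sure the version of Aubin--Lions used genuinely permits $q' = 1$ (as stated in Lemma \ref{lem:al} part (i)) rather than the stricter $q' > 1$ often required, and ensuring the triple of Banach spaces is reflexive. Since $1 < p < \infty$, both $W^{1,p}(\Omega)$ and $L^{p}(\Omega)$ are reflexive, and $(W^{1,p}(\Omega))^{*}$ is the dual of a reflexive space, hence reflexive; thus the hypotheses are met and no technical complication arises beyond carefully invoking $p > n$ for compactness.
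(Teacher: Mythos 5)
Your proof is correct and follows essentially the same route as the paper: both arguments feed the uniform bound $I^{\epsilon}\in L^{p}((0,T);W^{1,p}(\Omega))$ from Lemma \ref{lem:es11} and the bound $\partial_t I^{\epsilon}\in L^{1}((0,T);(W^{1,p}(\Omega))^{*})$ from Theorem \ref{thm:td1} into the Aubin--Lions lemma with the triple $W^{1,p}(\Omega)\hookrightarrow\hookrightarrow L^{p}(\Omega)\hookrightarrow (W^{1,p}(\Omega))^{*}$. Your added verifications (Rellich--Kondrachov via $p>n$, reflexivity, and that the stated version of Lemma \ref{lem:al} tolerates $q'=1$) are details the paper leaves implicit, but the substance is identical.
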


\begin{proof}
    We know that via estimates in Lemma \ref{lem:es11}, we have
    \begin{equation}
I^{\epsilon_{j}} \in L^{p}((0,T), W^{1,p}(\Omega)).
\end{equation}
\end{proof}
Furthermore via Theorem \ref{thm:td1}, we have 

 \begin{equation}
 \frac{\partial I^{\epsilon}}{\partial t} \in L^{1}((0,T); (W^{1,p}(\Omega))^{*}).
\end{equation}

We now use the classical Aubin-Lion compactness lemma \ref{lem:al}, with the function spaces

 \begin{equation}
W^{1,p}(\Omega) \hookrightarrow \hookrightarrow L^{p}(\Omega) \hookrightarrow (W^{1,p}(\Omega))^{*},
 \end{equation}

 to obtain the compactness of the subsequence $ I^{\epsilon_{j}}$ in $L^{p}((0,T), L^{p}(\Omega))$, thus yielding,
 \begin{equation}
     I^{\epsilon_{j}} \rightarrow I^{*} \ \mbox{in} \ L^{p}((0,T), L^{p}(\Omega)).
     \end{equation}

Subsequently, we have
\begin{equation}
\lim_{j \rightarrow \infty}  \int^{T}_{0}\int_{\Omega}|I^{\epsilon_{j}}-I^{*}|dxdt \rightarrow 0. 
 \end{equation}
Note as $\lim_{j \rightarrow \infty}\epsilon_{j} \rightarrow 0$
, this follows via 
\begin{equation}
L^{p}((0,T), L^{p}(\Omega)) \hookrightarrow L^{1}((0,T), L^{1}(\Omega)).
 \end{equation}

\subsubsection{Convergence of the Non-linear gradient term} 

\begin{lemma}
\label{lem:cnt}
Consider the system \eqref{eq:pde_modeld2} with $2<p$, there exists a subsequence ${\epsilon_{j}}$, such that for any $T>0$

\begin{equation}
|\nabla I^{\epsilon_{j}}|^{p-2}  \nabla I^{\epsilon_{j}} \rightarrow |\nabla I|^{p-2}  \nabla I \ \mbox{in} \ L^{\frac{p}{p-1}}(\Omega \times (0,T)).
 \end{equation}
\end{lemma}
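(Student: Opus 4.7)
The plan is the classical Minty--Browder monotonicity argument tailored to the $p$-Laplacian. The weak limit $\Gamma \in L^{p/(p-1)}$ furnished by Theorem \ref{thm:es22} must be identified with $|\nabla I^{*}|^{p-2}\nabla I^{*}$, after which the third inequality of Lemma \ref{lem:pc1} upgrades weak-in-$L^{p/(p-1)}$ convergence to strong convergence.

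First I would derive an energy identity for the regularised equation by testing the second equation of \eqref{eq:pde_modeld2} against $I^{\epsilon}$ itself over $\Omega \times (0,T)$, which yields
\begin{equation*}
\tfrac{1}{2}\|I^{\epsilon}(T)\|_{L^{2}}^{2} - \tfrac{1}{2}\|I^{\epsilon}_{0}\|_{L^{2}}^{2} + \delta_{2}\int_{0}^{T}\!\!\int_{\Omega}(|\nabla I^{\epsilon}|^{2}+\epsilon)^{\frac{p-2}{2}}|\nabla I^{\epsilon}|^{2}\,dx\,dt = R^{\epsilon},
\end{equation*}
where the reaction remainder $R^{\epsilon}$ passes to the limit via the strong $L^{p}$ convergence of $I^{\epsilon_{j}}$ from Lemma \ref{lem:ala1} combined with the uniform bounds of Lemma \ref{lem:es11}. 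Testing the limit equation for $I^{*}$ (with flux $\Gamma$) against $I^{*}$ furnishes an analogous identity with $\int\Gamma\cdot\nabla I^{*}$ in place of the regularised energy; weak lower semicontinuity of $\|\cdot\|_{L^{2}}^{2}$ at $t=T$ then delivers
\begin{equation*}
\limsup_{j\to\infty}\int_{0}^{T}\!\!\int_{\Omega}(|\nabla I^{\epsilon_{j}}|^{2}+\epsilon_{j})^{\frac{p-2}{2}}|\nabla I^{\epsilon_{j}}|^{2}\,dx\,dt \;\leq\; \int_{0}^{T}\!\!\int_{\Omega}\Gamma\cdot\nabla I^{*}\,dx\,dt.
\end{equation*}

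Next, the elementary monotonicity of $\xi \mapsto (|\xi|^{2}+\epsilon)^{(p-2)/2}\xi$ gives, for any $v \in L^{p}(0,T;W^{1,p}(\Omega))$,
\begin{equation*}
\int_{0}^{T}\!\!\int_{\Omega}\bigl[(|\nabla I^{\epsilon_{j}}|^{2}+\epsilon_{j})^{\frac{p-2}{2}}\nabla I^{\epsilon_{j}} - |\nabla v|^{p-2}\nabla v\bigr]\cdot(\nabla I^{\epsilon_{j}}-\nabla v)\,dx\,dt \geq 0.
\end{equation*}
Expanding and passing to the limit, the diagonal term is controlled by the energy bound above, and the remaining three terms are handled by weak-times-strong pairings from Theorem \ref{thm:es22}, yielding
\begin{equation*}
\int_{0}^{T}\!\!\int_{\Omega}(\Gamma - |\nabla v|^{p-2}\nabla v)\cdot(\nabla I^{*} - \nabla v)\,dx\,dt \geq 0.
\end{equation*}
Choosing $v = I^{*} + \lambda w$ for arbitrary $w \in L^{p}(0,T;W^{1,p}(\Omega))$ and letting $\lambda\to 0^{\pm}$ pins down $\Gamma = |\nabla I^{*}|^{p-2}\nabla I^{*}$ as elements of $L^{p/(p-1)}$.

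Finally, applying the third inequality of Lemma \ref{lem:pc1} pointwise to $\nabla I^{\epsilon_{j}}$ and $\nabla I^{*}$ and integrating gives
\begin{equation*}
C\int_{0}^{T}\!\!\int_{\Omega}|\nabla I^{\epsilon_{j}} - \nabla I^{*}|^{p}\,dx\,dt \;\leq\; \int_{0}^{T}\!\!\int_{\Omega}\bigl(|\nabla I^{\epsilon_{j}}|^{p-2}\nabla I^{\epsilon_{j}} - |\nabla I^{*}|^{p-2}\nabla I^{*}\bigr)\cdot(\nabla I^{\epsilon_{j}} - \nabla I^{*})\,dx\,dt.
\end{equation*}
All four expanded terms on the right converge to $\int|\nabla I^{*}|^{p}$: the diagonal by the energy identity (now an equality since upper and lower bounds coincide), and the cross terms by weak-times-strong pairing, using $\Gamma = |\nabla I^{*}|^{p-2}\nabla I^{*}$ tested against $\nabla I^{*} \in L^{p}$. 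Hence $\nabla I^{\epsilon_{j}} \to \nabla I^{*}$ strongly in $L^{p}(\Omega\times(0,T))$, which along a further subsequence forces a.e.\ convergence of the gradients. Combined with the uniform $L^{p/(p-1)}$ bound on $|\nabla I^{\epsilon}|^{p-2}\nabla I^{\epsilon}$ inherited from Lemma \ref{lem:es11} and Vitali's convergence theorem, this delivers the desired strong convergence. The main obstacle will be the energy-identity step, specifically the justification of the limit behaviour of $\|I^{\epsilon_{j}}(T)\|_{L^{2}}^{2}$ and of the degenerate absorption term $\mu(I^{\epsilon})^{\gamma}$ with $0<\gamma<1$: both require careful combination of the time-derivative bound of Theorem \ref{thm:td1} with the $L^{p}$ compactness of Lemma \ref{lem:ala1} in order to guarantee lower semicontinuity at the end time and convergence of all reaction contributions.
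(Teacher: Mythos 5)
Your proposal follows the same core strategy as the paper: a Minty--Browder monotonicity argument that identifies the weak limit $\Gamma$ from Theorem \ref{thm:es22} with $|\nabla I^{*}|^{p-2}\nabla I^{*}$. The paper implements this in Lemma \ref{lem:cnt2i} by splitting
\begin{equation*}
\int_{0}^{T}\!\!\int_{\Omega}\bigl(|\nabla I^{\epsilon}|^{p-2}\nabla I^{\epsilon}-|\nabla \phi|^{p-2}\nabla \phi\bigr)\cdot(\nabla I-\nabla \phi)\,dx\,dt = I_{1}+I_{2}+I_{3},
\end{equation*}
showing $I_{2}\to 0$ by weak convergence, $I_{3}\geq 0$ by Lemma \ref{lem:pc1}, and $I_{1}\to 0$ by testing the regularised equation directly against $I-I^{\epsilon}$ and invoking Theorem \ref{thm:td1} and Lemma \ref{lem:ala1}; the variational choice $\phi = I-\lambda\psi$, $\lambda\to 0^{\pm}$, then pins down $\Gamma$. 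Your route to the key energy control is slightly different — you test the regularised and limit equations against $I^{\epsilon}$ and $I^{*}$ separately and use lower semicontinuity of $\|I^{\epsilon_{j}}(T)\|_{L^{2}}^{2}$ to get the limsup inequality, rather than pairing $\partial_{t}I^{\epsilon}$ with $I-I^{\epsilon}$ — but these are equivalent in substance and share the same delicate point (the duality pairing involving $\partial_{t}I^{\epsilon}$, which is only bounded in $L^{1}((0,T);(W^{1,p}(\Omega))^{*})$), which you correctly flag. Where your argument genuinely adds value is the final upgrade: the lemma as stated asserts \emph{strong} convergence of the flux in $L^{p/(p-1)}(\Omega\times(0,T))$, but the paper's chain of lemmas only delivers the weak identification $\Gamma=|\nabla I|^{p-2}\nabla I$ (which is all that Definition \ref{def:d1} requires). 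Your additional step — combining the strict monotonicity inequality of Lemma \ref{lem:pc1} with the coincidence of the upper and lower energy bounds to get $\nabla I^{\epsilon_{j}}\to\nabla I^{*}$ strongly in $L^{p}$, then a.e.\ convergence plus convergence of $L^{p/(p-1)}$ norms to conclude — is exactly what is needed to justify the strong convergence actually claimed in the statement, and it is absent from the paper.
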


We will first prove convergences in the regularized system \eqref{eq:pde_modeld2}. We first consider a weak formulation of solution to system \eqref{eq:pde_modeld2}, that is for a $\phi \in C^{\infty}_{0}(\overline{\Omega} \times [0,T))$, we have

\begin{eqnarray}
 && \lim_{j \rightarrow \infty} \int^{T}_{0} \int_{\Omega} \frac{\partial I^{\epsilon_{j}}}{\partial t} \phi dx dt \nonumber \\
 && = - \lim_{j \rightarrow \infty}\int^{T}_{0} \int_{\Omega} \frac{\partial \phi}{\partial t} I^{\epsilon_{j}} dx dt + \int_{\Omega}I^{\epsilon_{j}}_{0}(x)\phi_{0}(x)dx \nonumber \\
 && =\int^{T}_{0} \int_{\Omega} \frac{\partial \phi}{\partial t} I dx dt + \int_{\Omega}I_{0}(x)\phi_{0}(x)dx . \  \nonumber \\
\end{eqnarray}
This follows as $\nabla I^{\epsilon} \rightharpoonup \nabla I^{*}  \ \mbox{in} \ L^{p}_{loc}((0,\infty);L^{p}(\Omega))$, and $L^{p}((0,T);W^{1,p}(\Omega)) \hookrightarrow L^{1}((0,T);L^{p}(\Omega))$.

The other semi-linear term convergences follow similarly,

\begin{eqnarray}
 && \lim_{j \rightarrow \infty} \lambda\int^{T}_{0} \int_{\Omega}  | S^{\epsilon_{j}}I^{\epsilon_{j}} - S I |\phi dx dt \nonumber \\
 && =  \lim_{j \rightarrow \infty} \lambda\int^{T}_{0} \int_{\Omega} |S^{\epsilon_{j}}I^{\epsilon_{j}}  - I^{\epsilon_{j}}S + I^{\epsilon_{j}}S - S I|  \phi dx dt \nonumber \\
 && \leq  \lim_{j \rightarrow \infty} \lambda ||I^{\epsilon_{j}}||_{L^{\infty} (0,T;L^{\infty}(\Omega))} \int^{T}_{0} \int_{\Omega} |S^{\epsilon_{j}}-S| \phi dxdt  \nonumber \\
 && +  \lim_{j \rightarrow \infty} \lambda ||S||_{L^{\infty} (0,T;L^{\infty}(\Omega))} \int^{T}_{0} \int_{\Omega} |I^{\epsilon_{j}}-I| \phi dxdt \nonumber \\
 && \leq 0.
\end{eqnarray}

This follows again from the estimates in Lemma \ref{lem:es11}, Theorem \ref{thm:es22} and $L^{p}((0,T);W^{1,p}(\Omega)) \hookrightarrow L^{1}((0,T);L^{p}(\Omega))$. For convergence of the semi-linear term $\lim_{j \rightarrow \infty} \int^{T}_{0} \int_{\Omega}  m (I^{\epsilon_{j}})^{\gamma} \phi dx dt$, we appeal again to Lemma \ref{lem:es11}, Theorem \ref{thm:es22} and $W^{1,p}(\Omega) \hookrightarrow \hookrightarrow L^{2}(\Omega) \hookrightarrow L^{1+\gamma}(\Omega)$, for $n=1,2$. Thus we have

\begin{eqnarray}
    && - \int^{T}_{0}\int_{\Omega} I \frac{\phi }{\partial t}  dx  - \int_{\Omega}I_{0}(x)\phi_{0}(x)dx \nonumber \\
    && = - \int^{T}_{0}\int_{\Omega} \Gamma \cdot \nabla \phi dx dt   + \int^{T}_{0}\int_{\Omega}[\lambda SI - \frac{mPI}{a+I} -\mu (I)^{\gamma}]\phi dx dt. \nonumber \\
   \end{eqnarray}

At this juncture, we will have that $(S,I,P)$ solve system \eqref{eq:model1}, for $p>2, 0 < \gamma < 1$, in the weak sense defination \ref{def:d1}, as long as we can show convergence of the non-linear gradient term. We proceed to this next. We show this next,

\begin{equation}
    \lim_{j \rightarrow \infty} \int^{T}_{0}\int_{\Omega}  \Gamma^{\epsilon_{j}} \cdot \nabla \phi dx dt = \int^{T}_{0}\int_{\Omega}  \Gamma \cdot \nabla \phi dx dt = 
    \int^{T}_{0}\int_{\Omega} |\nabla I|^{p-2} \nabla I \cdot \nabla \phi dx dt.
\end{equation}

Thus, we will show that 
\begin{equation}
 \int^{T}_{0}\int_{\Omega} (|\nabla I|^{p-2} \nabla I - \Gamma ) \cdot \nabla \phi dx dt = 0.
\end{equation}
By showing the above is true both in $(\geq)$ and $(\leq)$ sense.

\begin{lemma}
\label{lem:cnt2i}
Consider a function $\phi \in L^{p}((0,T);W^{1,p}(\Omega))$, then for solutions to system \eqref{eq:pde_modeld2} we have,

\begin{equation}
\label{eq:1een}
     \int^{T}_{0}\int_{\Omega} ( \Gamma - |\nabla \phi|^{p-2} \nabla \phi  ) \cdot (\nabla I - \nabla \phi )dx dt \geq  0.
\end{equation}
    
\end{lemma}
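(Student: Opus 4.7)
The plan is a standard Minty-type monotonicity argument: starting from the pointwise monotonicity of the regularized $p$-Laplacian flux, integrate, pass to the limit $\epsilon_j \to 0$, and collect four terms into the claimed inequality. By Lemma \ref{lem:pc1} (applied to the regularized operator $\xi \mapsto (|\xi|^2+\epsilon)^{(p-2)/2}\xi$, which remains monotone for $p\geq 2$), for any $\xi,\eta\in\mathbb{R}^n$ we have
\begin{equation*}
\left((|\xi|^2+\epsilon_j)^{\frac{p-2}{2}}\xi - (|\eta|^2+\epsilon_j)^{\frac{p-2}{2}}\eta\right)\cdot(\xi-\eta)\;\geq\;0.
\end{equation*}
Applied with $\xi=\nabla I^{\epsilon_j}$, $\eta=\nabla \phi$ and integrated over $\Omega\times(0,T)$, this expands as $J_1^{\epsilon_j}-J_2^{\epsilon_j}-J_3^{\epsilon_j}+J_4^{\epsilon_j}\geq 0$, where $J_1$ is the diagonal term, $J_2$ is paired with $\nabla\phi$, $J_3$ has the $\nabla\phi$-side nonlinearity against $\nabla I^{\epsilon_j}$, and $J_4$ is purely in $\nabla\phi$.

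For $J_2^{\epsilon_j}$ I use the weak convergence \eqref{eq:nlt}, giving $J_2^{\epsilon_j}\to\int_0^T\!\!\int_\Omega \Gamma\cdot\nabla\phi$. For $J_3^{\epsilon_j}$ and $J_4^{\epsilon_j}$, the factor $(|\nabla\phi|^2+\epsilon_j)^{(p-2)/2}\nabla\phi$ is dominated by $C(|\nabla\phi|^{p-1}+1)\in L^{p/(p-1)}$ (by the second inequality of Lemma \ref{lem:pc1}) and converges pointwise to $|\nabla\phi|^{p-2}\nabla\phi$; dominated convergence gives strong convergence in $L^{p/(p-1)}(\Omega\times(0,T))$, which paired with $\nabla I^{\epsilon_j}\rightharpoonup \nabla I$ weakly in $L^p$ (Theorem \ref{thm:es22}) yields $J_3^{\epsilon_j}\to \int_0^T\!\!\int_\Omega |\nabla\phi|^{p-2}\nabla\phi\cdot\nabla I$ and $J_4^{\epsilon_j}\to\int_0^T\!\!\int_\Omega |\nabla\phi|^{p}$.

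The delicate step is identifying $\limsup_j J_1^{\epsilon_j}$. I test the regularized equation for $I^{\epsilon_j}$ against $I^{\epsilon_j}$ itself and integrate from $0$ to $T$ to obtain the energy identity
\begin{equation*}
\tfrac12\|I^{\epsilon_j}(T)\|_{L^2}^2 + \delta_2 J_1^{\epsilon_j} = \tfrac12\|I^{\epsilon_j}_0\|_{L^2}^2 + \int_0^T\!\!\int_\Omega \Big[\lambda S^{\epsilon_j}(I^{\epsilon_j})^2 - \tfrac{mP^{\epsilon_j}(I^{\epsilon_j})^2}{a+I^{\epsilon_j}}-\mu(I^{\epsilon_j})^{\gamma+1}\Big]\,dx\,dt.
\end{equation*}
The strong $L^p$ convergence of $I^{\epsilon_j}$ (Lemma \ref{lem:ala1}) together with the uniform $L^\infty$ bounds from Lemma \ref{lem:es11} (used with dominated convergence) allow me to pass to the limit in every reaction term on the right. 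Since $\|I^{\epsilon_j}(T)\|_{L^2}^2 \geq \|I(T)\|_{L^2}^2 - o(1)$ by weak lower semicontinuity (using $I\in C([0,T];L^2(\Omega))$, which follows from the time-derivative bound in Theorem \ref{thm:td1} and the standard Aubin--Lions embedding into $C_w$), I obtain
\begin{equation*}
\limsup_{j\to\infty}\delta_2 J_1^{\epsilon_j}\;\leq\;\tfrac12\|I_0\|_{L^2}^2-\tfrac12\|I(T)\|_{L^2}^2+\int_0^T\!\!\int_\Omega\Big[\lambda S I^2-\tfrac{mPI^2}{a+I}-\mu I^{\gamma+1}\Big]\,dx\,dt.
\end{equation*}
Testing the limit equation (with $\Gamma$ in place of $|\nabla I|^{p-2}\nabla I$) against $I$ itself, which is legitimate because $I\in L^p((0,T);W^{1,p}(\Omega))$ and $\partial_t I\in L^1((0,T);(W^{1,p}(\Omega))^*)$, converts the right-hand side above into $\delta_2 \int_0^T\!\!\int_\Omega \Gamma\cdot\nabla I$. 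Substituting the four limits into $J_1^{\epsilon_j}-J_2^{\epsilon_j}-J_3^{\epsilon_j}+J_4^{\epsilon_j}\geq 0$ yields \eqref{eq:1een}.

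The main obstacle is the $J_1$-step: one must rigorously justify testing the limit equation with $I$ (requiring the chain rule $\tfrac{d}{dt}\tfrac12\|I\|_{L^2}^2=\langle\partial_t I,I\rangle$, which needs $I\in C([0,T];L^2)$ via Aubin--Lions applied to the bounds of Lemma \ref{lem:es11} and Theorem \ref{thm:td1}) and use weak lower semicontinuity of $\|I(T)\|_{L^2}^2$ in the correct direction. The other three limits are routine once the uniform bounds and weak convergence from Theorem \ref{thm:es22} are in hand.
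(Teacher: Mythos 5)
Your argument is correct in outline and lands on \eqref{eq:1een}, but it is a genuinely different implementation of the Minty--Browder trick than the one in the paper. The paper expands $\int_0^T\!\!\int_\Omega(|\nabla I^{\epsilon}|^{p-2}\nabla I^{\epsilon}-|\nabla\phi|^{p-2}\nabla\phi)\cdot(\nabla I-\nabla\phi)$ into three pieces $I_1+I_2+I_3$, keeps the monotonicity term $I_3\geq 0$ (Lemma \ref{lem:pc1}), sends $I_2\to 0$ by weak convergence, and disposes of the cross term $I_1=\int\!\!\int|\nabla I^{\epsilon}|^{p-2}\nabla I^{\epsilon}\cdot(\nabla I-\nabla I^{\epsilon})$ by testing the \emph{regularized} equation against $I-I^{\epsilon}$ and showing every resulting term vanishes; this amounts to proving that the energy $\int\!\!\int|\nabla I^{\epsilon}|^{p-2}|\nabla I^{\epsilon}|^2$ actually \emph{converges} to $\int\!\!\int\Gamma\cdot\nabla I$. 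You instead start from the pointwise monotonicity of the regularized flux, and for the diagonal term you only establish the one-sided bound $\limsup_j \delta_2 J_1^{\epsilon_j}\leq\delta_2\int\!\!\int\Gamma\cdot\nabla I$ via the energy identity for $I^{\epsilon_j}$, weak lower semicontinuity of $\|I^{\epsilon_j}(T)\|_{L^2}^2$, and testing the limit equation against $I$. Your version is the more economical one (a one-sided energy inequality suffices, and it is more robust to loss of compactness), at the price of having to justify the chain rule $\frac{d}{dt}\frac12\|I\|_{L^2}^2=\langle\partial_t I,I\rangle$ for the limit equation with only $\partial_t I\in L^1((0,T);(W^{1,p}(\Omega))^{*})$ from Theorem \ref{thm:td1}; the paper's version trades this for the equally delicate claim that $\int_0^T\!\!\int_\Omega \partial_t I^{\epsilon}(I-I^{\epsilon})\,dx\,dt\to 0$, so neither route is rigorously cheaper on the time-derivative pairing, and you correctly flag this as the weak point. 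Two small technical remarks: your passage $J_2^{\epsilon_j}\to\int\!\!\int\Gamma\cdot\nabla\phi$ invokes \eqref{eq:nlt}, which concerns $|\nabla I^{\epsilon_j}|^{p-2}\nabla I^{\epsilon_j}$ rather than the regularized flux $(|\nabla I^{\epsilon_j}|^2+\epsilon_j)^{\frac{p-2}{2}}\nabla I^{\epsilon_j}$ appearing in $J_2^{\epsilon_j}$, so you need the elementary estimate that the discrepancy is $O(\epsilon_j^{\frac{p-2}{2}}|\nabla I^{\epsilon_j}|)$ (for $2<p<4$; a mean-value bound for $p\geq 4$) together with the uniform $L^p$ bound of \eqref{eq:l15n} — the paper silently makes the same identification in its treatment of $I_1$; and your energy identity should be checked to carry the correct sign on the predation term $\frac{mP^{\epsilon_j}(I^{\epsilon_j})^2}{a+I^{\epsilon_j}}$ when passing to the limit, which works because of the strong $L^p$ convergence of Lemma \ref{lem:ala1} and the uniform bounds of Lemma \ref{lem:es11}.
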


\begin{proof}
We consider
\begin{eqnarray}
\label{eq:1id1}
&& \int^{T}_{0}\int_{\Omega} ( |\nabla I^{\epsilon}|^{p-2} \nabla I^{\epsilon}  - |\nabla \phi|^{p-2} \nabla \phi  ) \cdot (\nabla I - \nabla \phi )dx dt \nonumber \\
    && = \int^{T}_{0}\int_{\Omega} ( |\nabla I^{\epsilon}|^{p-2} \nabla I^{\epsilon} \cdot (\nabla I - \nabla I^{\epsilon})dxdt
    +\int^{T}_{0}\int_{\Omega}(|\nabla \phi|^{p-2} \nabla \phi  ) \cdot (\nabla I^{\epsilon} - \nabla I)dxdt \nonumber \\
    && + \int^{T}_{0}\int_{\Omega} ( |\nabla I^{\epsilon}|^{p-2} \nabla I^{\epsilon}  - |\nabla \phi|^{p-2} \nabla \phi  ) (\nabla I^{\epsilon} - \nabla \phi)dxdt\nonumber \\
    && = I_{1} + I_{2} + I_{3}. \nonumber \\
\end{eqnarray}
    We see $I_{2} \rightarrow 0$ as ($\epsilon_{j}$ relabeled $\epsilon$) $\epsilon \rightarrow 0$, via the estimates in lemma \ref{lem:es11} and Theorem \ref{thm:es22}. Furthermore, $I_{3} \geq 0$ via the preliminary lemma \ref{lem:pc1}. We next tackle the $I_{1}$ term. We multiply system \eqref{eq:pde_modeld2} by $I-I^{\epsilon}$ and integrate by parts to yield,

\begin{eqnarray}
&& \int^{T}_{0}\int_{\Omega} \left( \frac{\partial I^{\epsilon}}{\partial t}  \right) ( I -  I^{\epsilon} )dx dt \nonumber \\
&& = \int^{T}_{0}\int_{\Omega} ( |\nabla I^{\epsilon}|^{2}  + \epsilon )^{\frac{p-2}{2}} \nabla I^{\epsilon}  \cdot  \nabla (I -   I^{\epsilon}) dx dt \nonumber \\
&& 
+\int^{T}_{0}\int_{\Omega} \left( \lambda S^{\epsilon}I^{\epsilon} - \frac{mP^{\epsilon}I^{\epsilon}}{a+I^{\epsilon}} -\mu (I^{\epsilon})^{\gamma}\right) ( I -  I^{\epsilon} )dx dt. \nonumber \\
\end{eqnarray}

     Thus we have that as $\epsilon \searrow 0$,

\begin{eqnarray}
&& \lim_{\epsilon \rightarrow 0} \int^{T}_{0}\int_{\Omega} \left( \frac{\partial I^{\epsilon}}{\partial t}  \right) ( I -  I^{\epsilon} )dx dt \nonumber \\
&& = \lim_{\epsilon \rightarrow 0} \int^{T}_{0}\int_{\Omega} ( |\nabla I^{\epsilon}|^{2}  + \epsilon )^{\frac{p-2}{2}} \nabla I^{\epsilon}  \cdot \nabla (I -  I^{\epsilon})dx dt \nonumber \\
&& + \lim_{\epsilon \rightarrow 0} \int^{T}_{0}\int_{\Omega} 
\left( \lambda S^{\epsilon}I^{\epsilon} - \frac{mP^{\epsilon}I^{\epsilon}}{a+I^{\epsilon}} -\mu (I^{\epsilon})^{\gamma}\right) 
( I -  I^{\epsilon} )dx dt.\nonumber \\
\end{eqnarray}

 Using lemma \ref{lem:ala1} and Theorem \ref{thm:td1} we have the L.H.S approaching 0. Whereas via lemma \ref{lem:es11} and lemma \ref{lem:ala1}, we have the second group of terms on the right hand side approaching zero. Furthermore,

 \begin{equation}
\lim_{\epsilon \rightarrow 0} \int^{T}_{0}\int_{\Omega} ( |\nabla I^{\epsilon}|^{2}  + \epsilon )^{\frac{p-2}{2}} \nabla I^{\epsilon} \cdot  ( \nabla ( I -   I^{\epsilon} ))dx dt =  \int^{T}_{0}\int_{\Omega} ( |\nabla I^{\epsilon}|^{p-2} \nabla I^{\epsilon} \cdot (\nabla I - \nabla I^{\epsilon})dxdt.
    \end{equation}

 Thus

 \begin{equation}
0 =  \lim_{\epsilon \rightarrow 0} \int^{T}_{0}\int_{\Omega} ( |\nabla I^{\epsilon}|^{p-2} \nabla I^{\epsilon} \cdot (\nabla I - \nabla I^{\epsilon}) + 0 + 0,
    \end{equation}
    thus $I_{1} \rightarrow 0$, which we now inject back into \eqref{eq:1id1} to yield \eqref{eq:1een} is true. Thus, choosing $\psi \in L^{p}(0,T;W^{1,p}(\Omega))$ we have, for $\lambda \psi = u - \phi$, with $\lambda \in \mathbb{R}$. Thus, we have inserting this form into \eqref{eq:1een},

    \begin{equation}
\int^{T}_{0}\int_{\Omega}(\Gamma - |\nabla I - \lambda \nabla \psi|^{p-2} \nabla (I - \lambda  \psi))   dxdt.
    \end{equation}
    Letting $\lambda \rightarrow 0^{+}$ and $\lambda \rightarrow 0^{-}$, we obtain the desired inequality. This proves the lemma.
\end{proof}

We can now state the main result of the section,

\begin{theorem}
\label{thm:mt1}
Consider the system \eqref{eq:model1}, when $p>2$, $0<\gamma<1$. If the initial data $S_{0}(x), P_{0}(x) \in W^{1,\infty}(\Omega), I_{0}(x) \in L^{\infty}$, then for any set of positive parameters 

$(\delta_{1},\delta_{2},\delta_{3},\lambda, K, r, m,a,\gamma, \chi_{1},\chi_{2},d)$, there exists a global weak solution $(S,I,P)$ to system \eqref{eq:model1}, in the sense of definition \ref{def:d1}. Furthermore, there exists a constant $C$, independent of $\epsilon$ such that the regularized solutions $(S^{\epsilon},I^{\epsilon},P^{\epsilon})$ to system \eqref{eq:pde_modeld2} satisfy,

\begin{equation}
   ||S^{\epsilon}||_{W^{1,\infty}(\Omega)} \leq C, \ ||I^{\epsilon}||_{L^{\infty}(\Omega)} \leq C, \
   ||P^{\epsilon}||_{L^{\infty}(\Omega)} \leq C,
\end{equation}
  for all $t > 0$.
\end{theorem}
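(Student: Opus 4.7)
The plan is to assemble the machinery built up in Lemmas \ref{lem:lq1}--\ref{lem:cnt2i} and pass to the limit $\epsilon_j \searrow 0$ in the weak formulation of the regularized system \eqref{eq:pde_modeld2} to recover a triple $(S,I,P)$ satisfying Definition \ref{def:d1}. The uniform-in-$\epsilon$ bounds claimed in the statement are immediate from Lemma \ref{lem:es11}, so the core task is to justify the passage to the limit in each term of the weak formulation for the regularized system and to identify the weak limit $\Gamma$ from Theorem \ref{thm:es22} as $|\nabla I|^{p-2}\nabla I$.

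First, I would test each equation of \eqref{eq:pde_modeld2} against $\phi \in C^\infty_0(\Omega \times [0,T))$ and integrate by parts to obtain the regularized weak identities. The linear $S^\epsilon_t$, $P^\epsilon_t$ and diffusion terms $\nabla S^\epsilon, \nabla P^\epsilon$ pass to the limit directly using the weak and weak-$*$ convergences in Theorem \ref{thm:es22}. For the reaction terms, the strong convergence $I^{\epsilon_j} \to I$ in $L^p((0,T);L^p(\Omega))$ from Lemma \ref{lem:ala1}, combined with the uniform $L^\infty$ bounds of Lemma \ref{lem:es11}, controls products like $\lambda S^\epsilon I^\epsilon$, $\frac{m P^\epsilon I^\epsilon}{a+I^\epsilon}$, and $\mu (I^\epsilon)^\gamma$ (for the last, I would use that $|s^\gamma - t^\gamma| \leq C|s-t|^\gamma$ on bounded sets together with the dominated convergence theorem applied along the a.e.\ convergent subsequence extracted from Lemma \ref{lem:ala1}).

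The cross-diffusion/prey-taxis terms in the $P$-equation require a separate argument. The products $\xi(P^\epsilon)P^\epsilon \nabla S^\epsilon$ and $\eta(P^\epsilon)P^\epsilon \nabla I^\epsilon$ are products of something converging weakly with something that a priori also converges only weakly, so to close the limit I would first upgrade the convergence of $P^\epsilon$ and $S^\epsilon$ to strong convergence in $L^r$ for suitable $r$. This is achieved by deriving time-derivative bounds for $S^\epsilon_t$ and $P^\epsilon_t$ in $L^1((0,T);(W^{1,p}(\Omega))^*)$ analogous to Theorem \ref{thm:td1} (using the uniform gradient bounds in Lemma \ref{lem:es11} and Assumption \ref{assu2} to control the prey-taxis flux $\xi(P^\epsilon)P^\epsilon\nabla S^\epsilon$ in $L^\infty$), then invoking the Aubin--Lions lemma \ref{lem:al} with $W^{1,2}(\Omega) \hookrightarrow\hookrightarrow L^2(\Omega) \hookrightarrow (W^{1,p}(\Omega))^*$. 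Continuity of $\xi, \eta$ plus the uniform $L^\infty$ bound then yields $\xi(P^{\epsilon_j})P^{\epsilon_j} \to \xi(P)P$ strongly in every $L^r$, while $\nabla S^{\epsilon_j} \rightharpoonup \nabla S$ weakly in $L^2$, which together suffice to identify the limit of the product.

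The remaining and principal obstacle is the identification of the nonlinear gradient limit $\Gamma = |\nabla I|^{p-2}\nabla I$ from \eqref{eq:nlt}. I would execute Minty's monotonicity trick exactly as set up in Lemma \ref{lem:cnt2i}: the inequality
\[
\int_0^T\!\!\int_\Omega (\Gamma - |\nabla \phi|^{p-2}\nabla \phi)\cdot(\nabla I - \nabla \phi)\,dx\,dt \geq 0
\]
for all $\phi \in L^p(0,T;W^{1,p}(\Omega))$ is plugged with $\phi = I - \lambda \psi$ and $\lambda \to 0^{\pm}$ to force $\int_0^T\!\int_\Omega (\Gamma - |\nabla I|^{p-2}\nabla I)\cdot \nabla \psi\,dx\,dt = 0$ for all $\psi$, hence $\Gamma = |\nabla I|^{p-2}\nabla I$ as elements of $L^{p/(p-1)}$. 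With this identification, collecting the limit passages in all four weak identities shows that $(S,I,P)$ satisfies Definition \ref{def:d1}, and since all the estimates of Lemma \ref{lem:es11} are uniform in $\epsilon$ and extend to arbitrary $T>0$, the solution is global and the stated bounds hold. The delicate step throughout is keeping the taxis terms under control when upgrading the weak convergence, since the cross-diffusion coefficients are density-dependent; Assumption \ref{assu2} (volume-filling cutoff at $P=M$) is precisely what makes this manageable.
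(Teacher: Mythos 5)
Your proposal is correct and follows essentially the same route as the paper: the paper's proof of Theorem \ref{thm:mt1} is a one-line assembly of Lemmas \ref{lem:mai}, \ref{lem:es11}, \ref{lem:ala1}, \ref{lem:cnt}, \ref{lem:cnt2i} and Theorems \ref{thm:es22}, \ref{thm:td1}, which is exactly the program you spell out (uniform estimates, weak/weak-$*$ compactness, Aubin--Lions for $I^{\epsilon}$, limit passage in the semilinear terms, and Minty's monotonicity trick to identify $\Gamma=|\nabla I|^{p-2}\nabla I$). If anything, your treatment of the prey-taxis terms $\xi(P^{\epsilon})P^{\epsilon}\nabla S^{\epsilon}$ and $\eta(P^{\epsilon})P^{\epsilon}\nabla I^{\epsilon}$ (upgrading $P^{\epsilon}$ to strong convergence via a time-derivative bound and Aubin--Lions) is more explicit than the paper's, which only carries out the limit passage for the $I$-equation in detail.
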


\begin{proof}
    This follows via Lemma \ref{lem:mai}, Lemma \ref{lem:es11}, Lemma \ref{lem:ala1}, Lemma \ref{lem:cnt},  Lemma \ref{lem:cnt2i}, Theorem \ref{thm:es22} and Theorem \ref{thm:td1}.
\end{proof}

\section{Stability analysis of the positive equilibrium ($p=2, \gamma =1$ case)}
This section of our paper is devoted to discuss the stability of the positive equilibrium solution for system $(\ref{eq:model1})$. 
It is easy to see that system  $(\ref{eq:model1})$ has a unique positive equilibrium solution $E^*=(S^*,I^*,P^*)$ if and only if 
\begin{equation}\label{eq:eqmcnd}
 (\mathbf{H_1}):   \lambda K > \mu ~~~and~~~ m > d + \dfrac{\lambda ad(r+\lambda K)}{r(\lambda K - \mu)},
\end{equation}
where
\begin{equation}
    S^*=K- \dfrac{ad(r+\lambda K)}{r(m-d)}, ~~~ I^*= \dfrac{ad}{m-d}, ~~~ P^*= \dfrac{(a+ I^*)(\lambda S^* - \mu)}{m}.
\end{equation}
It has obtained the global stability of the  positive equilibrium solution $E^*$ for corresponding ODE system and reaction diffusion system without taxis in \cite{chattopadhyay2001pelicans}. In this section, we focus on how the inclusion of taxis affect the stability and enhance the possibility of pattern formulations. Here, we assume $\chi_1 = 0 $.\\
The linearized problem of system ($\ref{eq:model1}$) at constant steady state $E^*= (S^*,I^*,P^*)$ can be expressed by
\begin{equation}
    \begin{pmatrix}
        \frac{\partial S}{\partial t}\\[5pt]
        \frac{\partial I}{\partial t}\\[5pt] 
        \frac{\partial P}{\partial t}\\
    \end{pmatrix}
 = D \begin{pmatrix}
     \Delta{S} \\
     \Delta{I} \\
     \Delta{P} \\
 \end{pmatrix} + J_{(S^*,I^*,P^*)}
 \begin{pmatrix}
     S\\
     I\\ 
     P\\
 \end{pmatrix},
\end{equation}
where 
\begin{equation}
    D = \begin{pmatrix}
        \delta_1 & 0 & 0 \\
        0 & \delta_2 & 0 \\
        0 & - \chi_2\eta(P^*) P^* & \delta_3 \\
       \end{pmatrix}, ~~
J_{(S^*,I^*,P^*)}=
          \begin{pmatrix}
          a_{11} & a_{12} & a_{13} \\
          a_{21} & a_{22} & a_{23} \\
          a_{31} & a_{32} & a_{33} \\
           
           \end{pmatrix}
\end{equation}
and 
\begin{equation*}
    a_{11} = - \dfrac{rS^*}{K}, ~~~ a_{12} = -\left( \dfrac{r}{K} + \lambda \right)S^*, ~~~ a_{13}=0, \\
\end{equation*}
\begin{equation}
   a_{21} = \lambda I^*, ~~~~~~a_{22}= \dfrac{mI^*P^*}{(a+I^*)^2}, ~~~~~~ a_{23}= -d, \\ 
\end{equation}
 \begin{equation*}
     a_{31}= 0,~~~~~~a_{32}= \dfrac{amP^*}{(a+I^*)^2}, ~~~~~~ a_{33}=0, \\
 \end{equation*}
 
\begin{lemma}\label{lemma2}
     Suppose $\delta_1=\delta_2=\delta_3=0$, and $\chi_1=\chi_2=0$ then, under the hypothesis $(\mathbf{H_1})$, constant steady state $E^*= (S^*, I^*, P^*)$ is locally asymptotically stable if and only if
     
      \begin{align}\label{eq:eqcnd2}
         (\mathbf{H_2}): ~~ &m> \text{max} [d+\lambda ad(r+\lambda K)/{r(\lambda K - \mu)}, d\lambda K/r],
          &\left(\dfrac{r}{K} + \lambda \right)S^*I^*\lambda > \dfrac{2rmS^*I^*P^*}{K(a+I^*)^2}.
         \end{align}
     \end{lemma}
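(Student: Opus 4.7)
Since $\delta_1=\delta_2=\delta_3=0$ and $\chi_1=\chi_2=0$, system \eqref{eq:model1} collapses to its kinetic ODE, and local asymptotic stability of $E^{*}$ is determined purely by the spectrum of the Jacobian $J_{(S^{*},I^{*},P^{*})}$ displayed above. The plan is a direct Routh--Hurwitz analysis of the cubic characteristic polynomial of $J_{(S^{*},I^{*},P^{*})}$. Before any eigenvalue computation I would first simplify the entries using the equilibrium identities $mI^{*}/(a+I^{*})=d$ and $\lambda S^{*}-\mu=mP^{*}/(a+I^{*})$; in particular $a_{22}=mI^{*}P^{*}/(a+I^{*})^{2}=d(\lambda S^{*}-\mu)/m$, which will make the later inequalities far more transparent.

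Expanding $\det(\lambda I-J_{(S^{*},I^{*},P^{*})})$ along the first row produces
\begin{equation*}
P(\lambda)=\lambda^{3}+A_{1}\lambda^{2}+A_{2}\lambda+A_{3},
\end{equation*}
with $A_{1}=-(a_{11}+a_{22})$, $A_{2}=a_{11}a_{22}-a_{12}a_{21}+d\,a_{32}$ and $A_{3}=-a_{11}\,d\,a_{32}$. By the Routh--Hurwitz criterion for a cubic, $E^{*}$ is locally asymptotically stable precisely when $A_{1}>0$, $A_{3}>0$, and $A_{1}A_{2}>A_{3}$. The sign $A_{3}>0$ is automatic under $(\mathbf{H_1})$ since $a_{11}<0$ and $d,a_{32}>0$. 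The inequality $A_{1}>0$ reduces, after inserting the simplified $a_{22}$, to $m\,r\,S^{*}>dK(\lambda S^{*}-\mu)$, which (using $\mu>0$) is implied by $m>d\lambda K/r$, the second max-entry in $(\mathbf{H_2})$. The first max-entry $m>d+\lambda ad(r+\lambda K)/[r(\lambda K-\mu)]$ is exactly the content of $(\mathbf{H_1})$ guaranteeing the positive equilibrium itself.

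The remaining condition $A_{1}A_{2}-A_{3}>0$ rearranges, after the $a_{11}d\,a_{32}$ term cancels, to
\begin{equation*}
-(a_{11}+a_{22})\bigl(a_{11}a_{22}-a_{12}a_{21}\bigr)>a_{22}\,d\,a_{32}.
\end{equation*}
Recognising $-a_{12}a_{21}=(r/K+\lambda)\lambda S^{*}I^{*}$ and $-2a_{11}a_{22}=2rmS^{*}I^{*}P^{*}/[K(a+I^{*})^{2}]$, the second inequality in $(\mathbf{H_2})$ is exactly the statement $|a_{12}a_{21}|>2|a_{11}a_{22}|$, i.e.\ the destabilising cross-term dominates twice the diagonal product; combined with $A_{1}>0$ this is what drives the displayed determinantal inequality. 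The main obstacle will be precisely this last step: translating $A_{1}A_{2}-A_{3}>0$ into the compact form of $(\mathbf{H_2})$ requires re-expressing $a_{22}\,d\,a_{32}$ through the equilibrium identities in terms of quantities already present in $a_{11}a_{22}$, together with careful sign-tracking, since both $a_{11}a_{22}$ and $a_{12}a_{21}$ are negative while $a_{22}\,d\,a_{32}>0$, which is where it is easiest to slip.
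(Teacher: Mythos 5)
Your overall framework is the right one and is almost certainly what the cited source does: with all diffusion and taxis coefficients set to zero the problem reduces to the kinetic ODE, and stability of $E^{*}$ is a Routh--Hurwitz question for the cubic characteristic polynomial of $J_{(S^{*},I^{*},P^{*})}$. Note, however, that the paper itself offers no argument here at all --- its ``proof'' is a bare citation to [Theorem 2] of \cite{chattopadhyay2001pelicans} --- so your computation, if completed, would be strictly more informative. The algebra you do show is correct: using $a_{13}=a_{31}=a_{33}=0$ and $a_{23}=-d$ one indeed gets $A_{1}=-(a_{11}+a_{22})=\mathcal{A}$, $A_{2}=a_{11}a_{22}-a_{12}a_{21}+d\,a_{32}=\mathcal{B}$, $A_{3}=-a_{11}d\,a_{32}=\mathcal{C}>0$; the identity $a_{22}=d(\lambda S^{*}-\mu)/m$ is right and correctly reduces $A_{1}>0$ to $mrS^{*}>dK(\lambda S^{*}-\mu)$, which $m>d\lambda K/r$ implies; and $A_{1}A_{2}-A_{3}=-(a_{11}+a_{22})(a_{11}a_{22}-a_{12}a_{21})-a_{22}d\,a_{32}$ after the stated cancellation.

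The genuine gap is exactly where you flag it, and it is not a formality. You assert that the second inequality of $(\mathbf{H_2})$, i.e.\ $-a_{12}a_{21}>-2a_{11}a_{22}$, ``combined with $A_{1}>0$ \ldots drives'' $A_{1}A_{2}-A_{3}>0$, but the obvious chain of estimates does not close: from $(\mathbf{H_2})$ one gets $a_{11}a_{22}-a_{12}a_{21}>-a_{11}a_{22}>0$, hence $A_{1}A_{2}-A_{3}>(-a_{11})A_{1}a_{22}-a_{22}d\,a_{32}$, and positivity of the right-hand side requires $(-a_{11})A_{1}\geq d\,a_{32}$, i.e.\ $\tfrac{rS^{*}}{K}\bigl(\tfrac{rS^{*}}{K}-\tfrac{mI^{*}P^{*}}{(a+I^{*})^{2}}\bigr)\geq \tfrac{amP^{*}d}{(a+I^{*})^{2}}$ --- an inequality that appears nowhere in the hypotheses. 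So either a further equilibrium identity is needed to absorb the $a_{22}d\,a_{32}$ term, or the stated $(\mathbf{H_2})$ is only a proxy for the true Routh--Hurwitz condition; your proposal resolves neither possibility. In addition, the lemma is an ``if and only if'', and your argument only addresses sufficiency: $m>d\lambda K/r$ implies $A_{1}>0$ but is not equivalent to it, and you give no argument that local asymptotic stability forces either inequality of $(\mathbf{H_2})$. To make this a proof you must either carry the equivalence $A_{1}>0,\ A_{1}A_{2}>A_{3}\iff(\mathbf{H_2})$ through explicitly (both directions), or, failing that, do what the paper does and defer to the computation in \cite{chattopadhyay2001pelicans}.
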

 \begin{proof}\cite[Theorem 2]{chattopadhyay2001pelicans} we can see in details.
 \end{proof}
 The stability of $E^*=(S^*,I^*,P^*)$ is determined by the eigenvalue problem
\begin{equation*}
    (D\Delta + J) \begin{pmatrix}
        \phi_1 \\ \phi_2 \\ \phi_3
    \end{pmatrix} = \gamma \begin{pmatrix}
        \phi_1 \\ \phi_2 \\ \phi_3
    \end{pmatrix}
\end{equation*}
that is 
\begin{equation}
\label{egnv}
   \begin{cases}
       \delta_1 \Delta \phi_1 + a_{11} \phi_1 + a_{12} \phi_2 + a_{13} \phi_3 = \gamma \phi_1, ~~~~~~~~~~~~x \in \Omega \\
       \delta_2  \Delta \phi_2 + a_{21} \phi_1 + a_{22} \phi_2 + a_{23} \phi_3 = \gamma \phi_2,  ~~~~~~~~~~~~x \in \Omega \\
       \delta_3 \Delta \phi_3 - \chi_2\eta(P^*)P^* \Delta \phi_2 + a_{31} \phi_1 + a_{32} \phi_2 + a_{33} \phi_3= \gamma \phi_3, ~~x \in \Omega \\
      \dfrac{\partial \phi_1}{\partial n}=\dfrac{\partial \phi_1}{\partial n}=\dfrac{\partial \phi_1}{\partial n}=0, ~~~~~~~~~~~~~~~~~~~~~~~~~~~x \in \partial \Omega\\
 \end{cases}    
\end{equation} 
Then, $\gamma$ is an eigenvalue of $D\gamma + J$ if and only if $\gamma$ is an eigenvalue of the matrix $-(\frac{k\pi}{l})^2D + J$ for each $k \geq 0 $, where $(\frac{k\pi}{l})^2(k \in \mathbb{N}_0)$ are the eigenvalues of $-\Delta$ under the Neumann boundary condition, where $\Omega = (0, l)$. It is easy to see that $\gamma$ satisfies the following characteristics equation
\begin{equation*}
    \Delta_k = \gamma ^3 + \mathcal{A}_k \gamma ^2 + \mathcal{B}_k \gamma + \mathcal{C}_k = 0,
\end{equation*}

where
\begin{align} \label{eqsys1}
\nonumber \mathcal{A}_k = &(\delta_1 + \delta_2 + \delta_3)(\frac{k\pi}{l})^2 +\mathcal{A},   \\
   \nonumber \mathcal{B}_k = & (\delta_1 \delta_2 + \delta_2 \delta_3 + \delta_3 \delta_1)(\frac{k\pi}{l})^4 + \mu_n \left[(\delta_2 + \delta_3)\dfrac{rS^*}{K} - (\delta_1 + \delta_3)\dfrac{mI^*P^*}{(a+I^*)^2}\right]+ \mathcal{B} -\chi_2 \eta(P^*) P^*d, \\
    \mathcal{C}_k = & 
    w_1 (\frac{k\pi}{l})^6 + w_2(\frac{k\pi}{l})^4  + w_3 (\frac{k\pi}{l})^2 + \mathcal{C} -\chi_2\eta(P^*)P^*\left(\delta_3 (\frac{k\pi}{l})^2 + \dfrac{rS^*}{K}d\right) , \\ 
   \nonumber \mathcal{P}_k= & \mathcal{A}_k \mathcal{B}_k-\mathcal{C}_k \\
   \nonumber=&m_1 (\frac{k\pi}{l})^6 + m_2 (\frac{k\pi}{l})^4 + m_3 (\frac{k\pi}{l})^2+ \mathcal{P} -[(\delta_1 + \delta_2)d + \delta_3(d -1)]\chi_2\eta(P^*)P^*(\frac{k\pi}{l})^2 \\
   &- \chi_2\eta(P^*)P^*d \left(2\dfrac{rS^*}{K} - \dfrac{mI^*P^*}{(a+I^*)^2}\right),
\end{align}
where
\begin{align*}
\mathcal{A} & = \dfrac{rS^*}{K}- \dfrac{mI^*P^*}{(a+I^*)^2},  \\
\mathcal{B} & = \dfrac{amP^*d}{(a+I^*)^2} - \dfrac{rmS^*I^*P^*}{K(a+I^*)^2}+ \left(\dfrac{r}{K} + \lambda \right)S^*I^* \lambda,  \\
\mathcal{C} & = \dfrac{rmaS^*P^*d}{K(a+I^*)^2}, \\
\mathcal{P} & = \mathcal{AB}-\mathcal{C} \\
 w_1 & = \delta_1 \delta_2 \delta_3 >0, \\
    w_2& =  \left[\delta_2 \delta_3\dfrac{rS^*}{K} - \delta_1 \delta_3 \dfrac{mI^*P^*}{(a+I^*)^2}\right], \\
    w_3 & =  \left[\dfrac{amP^*d}{(a+I^*)^2}\delta_1 +\left(- \dfrac{rmS^*I^*P^*}{K(a+I^*)^2} + \left(\dfrac{r}{K} + \lambda \right)\lambda S^*I^* \right)\delta_3 \right] , \\
     m_1 &=  \left[(\delta_1 + \delta_2 + \delta_3)(\delta_1 \delta_2 + \delta_2 \delta_3 + \delta_3 \delta_1) - \delta_1 \delta_2 \delta_3\right], \\
  m_2& =  -\{(a_{11}+a_{22})[\delta_3(\delta_1+\delta_2+\delta_3)+\delta_1\delta_2] + (a_{22}+a_{33})[\delta_2(\delta_1+\delta_2+\delta_3)+\delta_2\delta_3],\\
  & + (a_{11}+a_{33})[\delta_2(\delta_1+\delta_2+\delta_3)+\delta_1\delta_3]\},\\
 m_3& =  \delta_1[B-Aa_{22}+a_{23}a_{32}]+\delta_2[B-Aa_{11}]+\delta_3[B-A(a_{11}+a_{22})-(a_{11}a_{22}-a_{12}a_{21})].
\end{align*}
\begin{theorem}\label{thm:lasymcnd}
Suppose that \eqref{eq:eqmcnd},\eqref{eq:eqcnd2} hold. Then, the positive equilibrium solution $(S^*,I^*,P^*)$ of system \eqref{eq:model1} is locally asymptotically stable if $\chi_2$ satisfies the following inequalities.
 \begin{align}\label{eq:main1}
  \nonumber \dfrac{rmaS^*P^*d}{K(a+I^*)^2} > & \chi_2\eta(P^*)P^*\left(\delta_3 (\frac{k\pi}{l})^2 + \dfrac{rS^*}{K}d\right) ~~~~\text{and}\\ 
   \mathcal{P} > & \chi_2\eta(P^*)P^*\left(\dfrac{2rdS^*}{K}-\dfrac{mI^*P^*d}{(a+I^*)^2} + [(\delta_1 +\delta_2)d + \delta_3(d-1)](\frac{k\pi}{l})^2\right).
\end{align}
\end{theorem}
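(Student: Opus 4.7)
The plan is to invoke the Routh--Hurwitz criterion for the cubic characteristic polynomial
$$\Delta_k(\gamma) = \gamma^3 + \mathcal{A}_k\gamma^2 + \mathcal{B}_k\gamma + \mathcal{C}_k$$
associated to each Fourier mode $(k\pi/l)^2$, $k\in\mathbb{N}_0$, coming from the eigenvalue problem \eqref{egnv}. Recall that every root of this cubic has negative real part if and only if
$$\mathcal{A}_k > 0, \qquad \mathcal{C}_k > 0, \qquad \mathcal{P}_k := \mathcal{A}_k\mathcal{B}_k - \mathcal{C}_k > 0.$$
So the strategy is to verify these three inequalities mode by mode, using hypotheses $(\mathbf{H_1})$, $(\mathbf{H_2})$ to control the ``purely kinetic'' parts (which correspond to $k=0$) and using \eqref{eq:main1} to absorb the sign-indefinite contributions introduced by the cross-diffusion coefficient $\chi_2\eta(P^*)P^*$.

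First, for $\mathcal{A}_k$, I would observe that $\mathcal{A} = rS^*/K - mI^*P^*/(a+I^*)^2$ equals $-(a_{11}+a_{22}+a_{33})$ and, by Lemma \ref{lemma2}, the ODE Routh--Hurwitz conditions under $(\mathbf{H_1}),(\mathbf{H_2})$ force $\mathcal{A}>0$; since $\delta_i\geq 0$, positivity of $\mathcal{A}_k$ is automatic for every $k\geq 0$. Next, for $\mathcal{C}_k$, I would rewrite the expression in \eqref{eqsys1} as
$$\mathcal{C}_k = w_1\Bigl(\tfrac{k\pi}{l}\Bigr)^6 + w_2\Bigl(\tfrac{k\pi}{l}\Bigr)^4 + \Bigl(w_3 - \chi_2\eta(P^*)P^*\delta_3\Bigr)\Bigl(\tfrac{k\pi}{l}\Bigr)^2 + \Bigl(\mathcal{C} - \chi_2\eta(P^*)P^*\tfrac{rS^*}{K}d\Bigr)$$
and split the first inequality of \eqref{eq:main1} into its constant and $k^2$ contributions. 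The constant piece is controlled by the $k$-independent part of the first inequality ($\mathcal{C} > \chi_2\eta(P^*)P^*\frac{rS^*}{K}d$); the $(k\pi/l)^2$ piece is controlled by the $\chi_2\eta(P^*)P^*\delta_3(k\pi/l)^2$ part, so positivity of $\mathcal{C}_k$ follows once one also notes $w_1 = \delta_1\delta_2\delta_3 > 0$ (when all $\delta_i>0$) and that $(\mathbf{H_2})$ forces the sign of the cross terms $w_2, w_3$ to be manageable via $\mathcal{A}>0$.

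Third, I would treat $\mathcal{P}_k = \mathcal{A}_k\mathcal{B}_k - \mathcal{C}_k$ in exactly the same fashion, using the displayed identity in \eqref{eqsys1}:
$$\mathcal{P}_k = m_1\Bigl(\tfrac{k\pi}{l}\Bigr)^6 + m_2\Bigl(\tfrac{k\pi}{l}\Bigr)^4 + m_3\Bigl(\tfrac{k\pi}{l}\Bigr)^2 + \mathcal{P} - \chi_2\eta(P^*)P^*\Bigl(\bigl[(\delta_1+\delta_2)d+\delta_3(d-1)\bigr]\Bigl(\tfrac{k\pi}{l}\Bigr)^2 + d\Bigl(\tfrac{2rS^*}{K} - \tfrac{mI^*P^*}{(a+I^*)^2}\Bigr)\Bigr).$$
The second inequality in \eqref{eq:main1} is precisely the statement that the destabilizing $\chi_2$-subtraction is dominated by the positive kinetic+diffusion contributions $\mathcal{P}+m_3(k\pi/l)^2+\cdots$, with $\mathcal{P}>0$ following from Lemma \ref{lemma2} applied to the ODE system. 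Having verified $\mathcal{A}_k,\mathcal{C}_k,\mathcal{P}_k>0$ for every $k$, Routh--Hurwitz concludes that every eigenvalue of the linearized operator at $E^*$ has negative real part, so $E^*$ is locally asymptotically stable.

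The main obstacle I anticipate is bookkeeping: the coefficients $w_2, w_3, m_2, m_3$ in \eqref{eqsys1} are algebraically messy and are \emph{not} manifestly non-negative for arbitrary choices of $\delta_1,\delta_2,\delta_3$, so care is needed to show that the Routh--Hurwitz quantities remain positive for \emph{all} wavenumbers, not just for small $k$ where the $\chi_2$-correction is most dangerous or for large $k$ where the $k^6$-diffusion terms dominate. In particular one must interpret \eqref{eq:main1} as holding for every admissible $k$, and verify that the $\chi_2$-induced negative contributions are majorized uniformly in $k$ by the combination of the kinetic stability margin $\mathcal{P}, \mathcal{C}>0$ and the monotone diffusion terms. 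Once the signs of $w_i, m_i$ are pinned down (using $(\mathbf{H_2})$ and the explicit definitions of $a_{ij}$), the rest is algebra.
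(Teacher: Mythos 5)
Your proposal follows essentially the same route as the paper: the paper's own justification (given in the remark immediately after the theorem) is exactly the mode-by-mode Routh--Hurwitz argument, with $\mathcal{A}_k>0$ automatic, and $\mathcal{C}_k>0$, $\mathcal{P}_k>0$ read off from the two inequalities in \eqref{eq:main1}. If anything, you are more careful than the paper, which simply asserts that $\mathcal{C}_k>0$ and $\mathcal{P}_k>0$ "follow from" \eqref{eq:main1} without addressing the sign-indefiniteness of $w_2,w_3,m_2,m_3$ that you correctly flag as the remaining bookkeeping obligation.
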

\begin{remark}
Based on the principle of linearized stability \cite[Theorem 5.2]{simonett1995center}, the positive equilibrium solution $E^*=(S^*,I^*,P^*)$ is asymptotically stable with respect to system \eqref{eq:model1} if and only if all eigenvalues of the matrix $-(\frac{k\pi}{l})^2D + J$ have negative real part, then according to the Routh-Hurwitz criterion \cite{liu2013pattern}, the positive equilibrium solution $E^*=(S^*,I^*,P^*)$ is locally asymptotically stable with respect to \eqref{eq:model1} iff the following conditions holds for each $k \in N^+$
\begin{equation*}
   \mathcal{A}_k >0, \mathcal{C}_k >0 ~\text{and}~ \mathcal{P}_k >0.
\end{equation*}
If one of the above conditions fail for some $k \in N^+$, then $E^*=(S^*,I^*,P^*)$ becomes unstable. Since  we always have $\mathcal{A}_k>0$, for all $k \in N^+$; moreover $\mathcal{C}_k>0$ and $\mathcal{P}_k >0$, follows from the condition \eqref{eq:main1}. Hence, $E^*=(S^*,I^*,P^*)$ is locally asymptotically stable for reaction-diffusion system \eqref{eq:model1} with prey-taxis provided \eqref{eq:main1} satisfied.
\end{remark}
From Theorem \ref{thm:lasymcnd}, we can know that system \eqref{eq:model1} has no Turing pattern for \eqref{eq:main1}. Thus, we can obtain the result on the instability of the positive equilibrium solution $E^*=(S^*,I^*,P^*)$ of system \eqref{eq:model1} if it fails to satisfy any condition of \eqref{eq:main1}. Furthermore, from Routh-Hurwitz criterion in \cite[corollary 2.2]{liu2013pattern}  for cubic polynomials, we find $\mathcal{C}_k=0$ for fixed $\chi_2$  satisfies \eqref{eq:steady}, then system \eqref{eq:model1} occurs the steady state bifurcation at $E^*=(S^*,I^*,P^*)$; Moreover, $\mathcal{P}_k=0$ for fixed $\chi_2$  satisfies \eqref{Hopf}, then system \eqref{eq:model1} undergoes a Hopf bifurcation at $E^*=(S^*,I^*,P^*)$.
We now seek existence conditions of the steady-state bifurcation and Hopf bifurcation respectively. According to Chen et al. \cite{chen2021stationary}, denote 
\begin{align*}
    S=&\{(k,\chi_2)\in R^+:\mathcal{C}_k=0 \}, \\
    H=&\{(k,\chi_2)\in R^+:\mathcal{P}_k =0\},
\end{align*}
be the steady-state bifurcation and Hopf bifurcation curves, respectively. Next, by considering the definitions of the curves 
S and H, with the prey-taxis coefficient 
$\chi_2$ as the bifurcation parameter, one proceeds to solve and express the solutions in the form

\begin{equation}\label{eq:steady}
       \chi^{S}_{2} = \frac{w_1 (\frac{k\pi}{l})^6 + w_2 (\frac{k\pi}{l})^4  + w_3 (\frac{k\pi}{l})^2 + \mathcal{C}}{\eta(P^*)P^*(rS^*d/K+\delta_3)}, 
    \end{equation}
    and \begin{equation}\label{Hopf}
      \chi^H_{2}= \dfrac{\mathcal{P}}{\eta(P^*)P^*\{[(\delta_1+\delta_2)d+\delta_3(d-1)](\frac{k\pi}{l})^2+ d(2rS^*/K-mI^*P^*/(a+I^*)^2)\} }.  \end{equation}
      
\subsection{Steady state bifurcation}
In this section, we employ abstract bifurcation theory to explore the existence of non-constant positive solutions for the following steady-state system.
\begin{equation}
 \begin{cases}
   \label{model2}
       \delta_1 \Delta S + rS\left( 1 - \dfrac{S+I}{K} \right) - \lambda SI = 0                                             & ~~~ x \in \Omega,              \\[10pt]
       \delta_2 \Delta I + \lambda SI - \dfrac{mPI}{a+ I} - \mu I  = 0                                                      & ~~~ x \in \Omega,         \\[10pt]
      \delta_3 \Delta P  - \chi_2\nabla \cdot(\eta(P) P \nabla I) + \dfrac{m I P}{a + I} - dP =0                   & ~~~ x \in \Omega,                \\[7pt]
     \dfrac{\partial S}{\partial \nu} =  \dfrac{\partial I}{\partial \nu} = \dfrac{\partial P}{\partial \nu} = 0         & ~~~ x \in \partial \Omega, \\[7pt]
     S(x,0)= S_0(x), I(x,0) = I_0(x), P(x,0) = P_0(x) & ~~~ x \in \Omega.
 \end{cases}
\end{equation}
In the remaining of this paper, unless explicitly stated otherwise, we assume that $\Omega = 
(0, l)$ with $l >0$. Under this assumption, system \eqref{model2} can be reformulated as follows:
\begin{equation}
 \begin{cases}
   \label{model3}
       \delta_1 S^{''} + rS\left( 1 - \dfrac{S+I}{K} \right) - \lambda SI = 0                                             & ~~~ x \in (0, l), \\[10pt]
       \delta_2  I^{''} + \lambda SI - \dfrac{mPI}{a+ I} - \mu I  = 0                                                      & ~~~ x \in (0, l),               \\[10pt]
      \delta_3 P^{''}  - \chi_2(\eta(P) P I^{'})^{'} + \dfrac{m I P}{a + I} - dP =0                   & ~~~ x \in (0, l),                   \\[7pt]
     S'(x)=I'(x)=P'(x)= 0         & ~~~ x=0,l .    \\[7pt]
 \end{cases}
\end{equation}

In order to utilize the bifurcation theorem proposed by Shi and Wang \cite{shi2009global}, we begin by defining the appropriate Hilbert space.
\begin{equation*}
    \mathcal{X} = \{\omega \in H^2(0,l)| \omega'(0)=\omega'(l)=0 \}.
\end{equation*}
Next, considering $\chi_2$ as the bifurcation parameter, we reformulate equation \eqref{model3} into an abstract framework.
\begin{equation*}
  \mathcal{F}(\chi_2,S,I,P)=0 , (\chi_2,S,I,P) \in \mathbb{R}\times \mathcal{X} \times \mathcal{X}\times \mathcal{X},  
\end{equation*}
where 
\begin{equation*}
    \mathcal{F}(\chi_2, S,I,P)= \left(\begin{array}{c}
         \delta_1 S^{''} + rS\left( 1 - \dfrac{S+I}{K} \right) - \lambda SI  \\
         \\
         \delta_2  I^{''} + \lambda SI - \dfrac{mPI}{a+ I} - \mu I \\
         \\
         \delta_3 P^{''}  - \chi_2(\eta(P) P  I^{'})^{'} + \dfrac{m I P}{a + I} - dP
    \end{array}\right).
\end{equation*}
It is easy to see that $\mathcal{F}(\chi_2, S^*,I^*,P^*)=0$ for any $\chi_2 \in \mathbb{R}$ and $\mathcal{F}: \mathbb{R}\times \mathcal{X}\times \mathcal{X} \times \mathcal{X} \rightarrow \mathcal{Y} \times \mathcal{Y}\times \mathcal{Y} $ is continuous and differentiable for $\mathcal{Y} = L^2(0,l)$. Furthermore, through direct computation, we find that for any fixed $(\tilde{S},\tilde{I},\tilde{P})\in \mathcal{X}\times \mathcal{X}\times \mathcal{X}$, the Fréchet derivative of $\mathcal{F}$ is given by 
\begin{align}\label{FrchetD}
    \nonumber & \mathcal{D}_{(S,I,P)}\mathcal{F}(\chi_2, \tilde{S},\tilde{I},\tilde{P})(S,I,P) \\[5pt]
    &= \left(\begin{array}{c}
         \delta_1 S^{''} + \mathcal{G}_1(S,I,P)  \\
         \\
         \delta_1 I^{''} + \mathcal{G}_2(S,I,P) \\
         \\
         \delta_1 P^{''} - \chi_2(\eta(\tilde{P})P \tilde{I}' + \eta(\tilde{P})\tilde{P}I' + \eta'(\tilde{P})\tilde{P}\tilde{I}'P)' +  \mathcal{G}_3(S,I,P)
    \end{array}\right),
\end{align}
where 
\begin{align*}
    \mathcal{G}_1(S,I,P)= & -\frac{r\tilde{S}}{K}S  -\left(\frac{r}{K} + \lambda \right)\tilde{S}I \\
    \mathcal{G}_2(S,I,P) = & \lambda \tilde{I} S + \left( \lambda \tilde{S} - \frac{a m \tilde{P}}{(a +\tilde{I})^2} - \mu \right) I - \frac{m \tilde{I}}{a + \tilde{I}} P \\
\mathcal{G}_3(S,I,P) = & \frac{a m \tilde{P}}{(a + \tilde{I})^2} I + \left( \frac{m \tilde{I}}{a + \tilde{I}} - d \right) P
\end{align*}
Now, we begin to find the possible bifurcation point with a particular value $\chi_2$. To this end, we should show that under that value of $\chi_2$, the Implicit Function Theorem fails on $\mathcal{F}$. Thus, $\mathcal{N}(\mathcal{D}(S,I,P)\mathcal{F}(\chi_2,S^*,I^*,P^*)) \neq \{0\}$ and this implies that there exists a nontrivial solution $(S,I,P)$ to the following problem:
\begin{equation}
 \begin{cases}
   \label{model4}
       \delta_1 S^{''} -\frac{rS^*}{K}S - \left(\frac{r}{K} + \lambda \right)S^*I = 0                                             & ~~~ x \in (0, l), \\[10pt]
       \delta_2  I^{''} +I^* \lambda S + \left( \lambda S^* - \frac{a m P^*}{(a + I^*)^2} - \mu \right) I - \frac{m I^*}{a + I^*} P  = 0                                                      & ~~~ x \in (0, l),               \\[10pt]
     -\chi_2 \eta(P^*)I^{''} + \delta_3 P^{''}+ \frac{a m P^*}{(a + I^*)^2} I + \left( \frac{m I^*}{a + I^*} - d \right) P =0                   & ~~~ x \in (0, l),                   \\[7pt]
     S'(x)=I'(x)=P'(x)= 0         & ~~~ x=0,l     \\[7pt]
 \end{cases}
\end{equation}
For any pair of functions $(S,I,P) \in \mathcal{X}\times \mathcal{X}\times \mathcal{X}$, $S, I$ and $P$ can be expanded as their eigen expansions:
\begin{equation*}
    \left(\begin{array}{c}
         S(x) \\
         I(x) \\
         P(x)
    \end{array}\right) = \sum\limits_{k=0}^{\infty}\left( \begin{array}{c}
         a_k  \\
         b_k   \\
         c_k
     \end{array}\right) cos \frac{k \pi x}{l},
\end{equation*}
where $cos \frac{k \pi x}{l}(k \in \mathbb{N})$ is all eigenfunctions corresponding to the eigenvalue $(\frac{k}{l})^2$ of $- \Delta $ under the Neumann boundary condition. Furthermore, at least one of these coefficients in the expansions is nonzero, if $(S,I,P)$ is nonzero. Since 
$k=0$ can be easily excluded, for $k \in \mathbb{Z}^+$, we substitute the eigenexpansions of $(S,I,P)$ into system \eqref{model4}. By multiplying system \eqref{model4} by $cos\frac{k\pi x}{l}$, integrating over $(0,l)$, and applying the boundary conditions, we find that
\begin{equation}
 \left( \begin{array}{ccc}
       -\delta_1 (\frac{k \pi}{l})^2 -\frac{rS^*}{K} & - \left(\frac{r}{K} + \lambda \right)S^* & 0 \\[10pt]
       I^* \lambda & -\delta_2 (\frac{k \pi}{l})^2 + \lambda S^* - \frac{amP^*}{(a+I^*)^2}-\mu & - \frac{mI^*}{a+I^*}             \\[10pt]
    0 &  \chi_2 \eta(P^*)(\frac{k \pi}{l})^2+\frac{amP^* }{(a+I^*)^2} & - \delta_3 (\frac{k \pi}{l})^2 + \left( \frac{m I^*}{a + I^*} - d \right)    
 \end{array}\right) \left(\begin{array}{c}
      a_k  \\
      b_k \\
      c_k
 \end{array}\right) = \left(\begin{array}{c}
      0  \\
      0   \\
      0
 \end{array}\right).
\end{equation} 
System \eqref{model4} has nontrivial solutions if and only if 
\begin{equation} \label{determ}
\begin{vmatrix}
    -\delta_1 (\frac{k \pi}{l})^2 -\frac{rS^*}{K} & - \left(\frac{r}{K} + \lambda \right)S^*   & 0 \\[10pt]
       I^* \lambda &-\delta_2 (\frac{k \pi}{l})^2 + \lambda S^* -\frac{amP^*}{(a+I^*)^2} - \mu & -\frac{mI^*}{a+I^*}             \\[10pt]
    0 &  \chi_2 \eta(P^*)(\frac{k \pi}{l})^2+\frac{amP^* }{(a+I^*)^2} & - \delta_3 (\frac{k \pi}{l})^2 + \left( \frac{m I^*}{a + I^*} - d \right)
\end{vmatrix}= 0,
\end{equation}
which implies that
\begin{align} \label{chi_k}
   \nonumber \chi_0= & \chi_2(k) \\
     \stackrel{\triangle}{=} & \frac{w_1 (\frac{k\pi}{l})^6 + w_2 (\frac{k\pi}{l})^4  + w_3 (\frac{k\pi}{l})^2 + \mathcal{C}}{\eta(P^*)P^*(rS^*d/K+\delta_3)}.
\end{align}
In the following, we will prove that all the conditions of Lemma 2 in \cite{song2017stability} are satisfied when $\chi_2= \chi_0$. Obviously, the condition $(i)$ of \cite[Lemma 2]{song2017stability} is satisfied.
Denoting $\mathbf{u}=(S,I,P)$, we can write \eqref{FrchetD} as
\begin{equation}\label{FrechetD2}
\mathcal{D}_{(S,I,P)}\mathcal{F}(\chi_2,\tilde{S},\tilde{I},\tilde{P})(S,I,P)= \mathbf{A}_0(\chi_2,\bold{u})\bold{u}^{''} + \mathbf{A}_1(\chi_2,\bold{u})\bold{u}^{'} + \bold{F}_0(\chi_2,\bold{u}),    
\end{equation}
where
\begin{align*}
  \bold{A}_0(\chi_2,\bold{u})= &\left(\begin{array}{ccc}
      \delta_1 & 0 & 0  \\
      \\
      0  & \delta_2 & 0 \\
      \\
      0 & -\chi_2(\eta(\tilde{P})\tilde{P} & \delta_3
  \end{array} \right),  \\[5pt]
  \bold{A}_1(\chi_2,\bold{u})= &\left(\begin{array}{ccc}
      0 ~~& 0 & 0  \\
      \\
      0 ~~ & 0 & 0 \\
      \\
      0 ~~& 0 & -\chi_2(\eta(\tilde{P})\tilde{I} + \eta'(\tilde{P})\tilde{P} \tilde{I}')
  \end{array} \right),  \\[5pt]
\bold{F}_0(\chi_2, \bold{u})= &\left(\begin{array}{c}
          \mathcal{G}_1(S,I,P) \\
          \\
         \mathcal{G}_2(S,I,P) \\
         \\
        -\chi_2((\eta(\tilde{P})\tilde{I}')'+ (\eta'(\tilde{P})\tilde{P}\tilde{I})' )P + \mathcal{G}_3(S,I,P)
    \end{array}\right).\\ 
\end{align*}

\noindent It is evident that \(\operatorname{Tr}(\mathbf{A}_0(\chi_2, \mathbf{u})) > 0\) and \(\det(\mathbf{A}_0(\chi_2, \mathbf{u})) > 0\) for all \(\mathbf{u} \in \mathcal{X} \times \mathcal{X} \times \mathcal{X}\). Consequently, the operator in \eqref{FrechetD2} is elliptic. In fact, it is strongly elliptic and satisfies Agmon's condition for angles \(\theta \in \left[-\frac{\pi}{2}, \frac{\pi}{2}\right]\), in line with Case 2 for \(N = 1\) discussed in \cite[Remark 2.5.5]{shi2009global}. Therefore, by \cite[Theorem 3.3 and Remark 3.4]{shi2009global}, the operator \(\mathcal{D}_{(S,I,P)}\mathcal{F}(\chi_2,\tilde{S},\tilde{I},\tilde{P})(S,I,P)\) is a Fredholm operator of index zero. As a result, condition (v) of \cite[Lemma 2]{song2017stability} is fulfilled.

 We now turn to verifying condition (iii) of \cite[Lemma 2]{song2017stability}. Specifically, we show that the null space \(\mathcal{N}(\mathcal{D}_{(S,I,P)}\mathcal{F}(\chi_0, S^*, I^*, P^*))\) is one-dimensional and is spanned by the following function

 \begin{equation} \label{null_space}
    \mathcal{N}(\mathcal{D}_{(S,I,P)}\mathcal{F}(\chi_0,S^*,I^*,P^*) = \text{span}\{\Tilde{S}_k,\Tilde{I}_k,\Tilde{P}_k\}, 
 \end{equation}
where 
\begin{equation*}
    (\Tilde{S}_k,\Tilde{I}_k,\Tilde{P}_k) = \{\Lambda_k, \zeta_k,1\}cos \frac{k\pi x}{l}
\end{equation*}
where
\begin{align*}
    \Lambda_k= & \frac{\left(\delta_1 (\frac{k\pi}{l})^2 - \frac{mI^*P^*}{(a+I^*)^2}\right)\delta_3 (\frac{k\pi}{l})^2}{I^*\lambda\left(\chi_2 \eta (P^*)(\frac{k\pi}{l})^2+ \frac{amP^*}{(a+I^*)^2}\right)} + \frac{d}{I^*\lambda}\\[5pt]
    \zeta_k= &\frac{\delta_3 (\frac{k\pi}{l})^2}{\chi_2 \eta (P^*)(\frac{k\pi}{l})^2+ \frac{amP^*}{(a+I^*)^2}}
\end{align*}\\

Because we have proved that $\mathcal{D}_{(S,I,P)}\mathcal{F}(\chi_2,S,I,P)$ is a Fredholm operator of index 0, it follows that $\text{dim} \mathcal{R}(\mathcal{D}_{(S,I,P)}\mathcal{F}(\chi_0,S^*,I^*,P^*))=1$. Thus, the condition (iii) of \cite[Lemma 2]{song2017stability}  is satisfied.
Finally,we prove the condition (iv) of \cite[Lemma 2]{song2017stability}  is satisfied. Define
\begin{equation*}
    y_0= \left(\Lambda_k \text{cos} \frac{k\pi x}{l}, \zeta_k \text{cos} \frac{k\pi x}{l}, \text{cos} \frac{k\pi x}{l}\right).
\end{equation*}
From \eqref{FrchetD}, we have 
\begin{equation}\label{Dxeq}
  \frac{d}{d \chi_2}(\mathcal{D}_{(S,I,P)}\mathcal{F}(\chi_2, \tilde{S},\tilde{I},\tilde{P})(S,I,P))= \left(\begin{array}{c}
         0  \\
         0 \\
          - (\eta(\tilde{P})P \tilde{I}' + \eta(\tilde{P})\tilde{P}I' + \eta'(\tilde{P})\tilde{P}\tilde{I}'P)'
    \end{array}\right)   
\end{equation}
which is continuous. It means that the condition (ii) of \cite[Lemma 2]{song2017stability}  is satisfied. Thus, from \eqref{Dxeq}, we can get 
\begin{equation*}
 \frac{d}{d \chi_2} (\mathcal{D}_{(S,I,P)}\mathcal{F}(\chi_0, \tilde{S},\tilde{I},\tilde{P})(S,I,P))= \left(\begin{array}{c}
         0  \\
         0 \\
          - \eta(P^*)P^* I''
    \end{array}\right),   
\end{equation*}
which yields 
\begin{equation*}
 \frac{d}{d \chi_2}( \mathcal{D}_{(S,I,P)}\mathcal{F}(\chi_0, S^*,I^*,P^*)y_0)= \left(\begin{array}{c}
         0  \\
         0 \\
         \eta(P^*)P^* \zeta_k (\frac{k\pi}{l})^4
    \end{array}\right)cos \frac{k\pi x}{l}.   
\end{equation*}\\
Suppose that 
\begin{equation*}
   \frac{d}{d \chi_2} (\mathcal{D}_{(S,I,P)}\mathcal{F}(\chi_0, S^*,I^*,P^*)y_0) \in \mathcal{R}(\mathcal{D}_{(S,I,P)}\mathcal{F}(\chi_0, S^*,I^*,P^*)),
\end{equation*}
which implies that there must exist a nontrivial solution $(S,I,P)$ such that
\begin{equation*}
    \mathcal{D}_{(S,I,P)}\mathcal{F}(\chi_0, S^*,I^*,P^*)(S,I,P) = \frac{d}{d \chi_2} (\mathcal{D}_{(S,I,P)}\mathcal{F}(\chi_0, S^*,I^*,P^*)y_0),
\end{equation*}
that is
\begin{equation}
 \begin{cases}
   \label{model5}
       \delta_1 S^{''} -\frac{rS^*}{K}S - \left(\frac{r}{K} + \lambda \right)S^*I = 0                                             & ~~~ x \in (0, l), \\[10pt]
       \delta_2  I^{''} +I^* \lambda S + \left( \lambda S^* - \frac{a m P^*}{(a + I^*)^2} - \mu \right) I - \frac{m I^*}{a + I^*} P  = 0                                                      & ~~~ x \in (0, l),               \\[10pt]
     -\chi_2 \eta(P^*)I^{''} + \delta_3 P^{''}+\frac{a m P^*}{(a + I^*)^2} I + \left( \frac{m I^*}{a + I^*} - d \right) P  = \eta(P^*)P^* \zeta_k (\frac{k\pi}{l})^4cos \frac{k\pi x}{l}                   & ~~~ x \in (0, l),                   \\[7pt]
     S'(x)=I'(x)=P'(x)= 0         & ~~~ x=0,l     \\[7pt]
 \end{cases}
\end{equation}.
Multipying the three equations in system \eqref{model5} by cos $\frac{k\pi x}{l}$ and then integrating them over $(0,l)$ by parts, we obtain
\begin{equation} \label{multip}
\begin{aligned}
    &\left(\begin{array}{ccc}
        -\delta_1 \left(\frac{k\pi}{l}\right)^2-\frac{rS^*}{K} & \left(\frac{r}{K} + \lambda \right)S^* & 0\\
        \\
         I^*\lambda & -\delta_2 \left(\frac{k\pi}{l}\right)^2+ \lambda S^* -\frac{amP^*}{(a+I^*)^2} - \mu & -\frac{mI^*}{a+I^*} \\
         \\
         0 &  \chi_2 \eta(P^*)\left(\frac{k \pi}{l}\right)^2+\frac{amP^* }{(a+I^*)^2} & - \delta_3 \left(\frac{k \pi}{l}\right)^2+\left( \frac{m I^*}{a + I^*} - d \right) 
    \end{array}\right)
    \left(\begin{array}{c}
         \int_{0}^{l} S\cos \frac{k\pi x}{l} \, dx  \\
         \\
         \int_{0}^{l} I\cos \frac{k\pi x}{l} \, dx\\
         \\
         \int_{0}^{l} P \cos \frac{k\pi x}{l} \, dx 
    \end{array}\right) \\
    &= \left(\begin{array}{c}
         0  \\
         \\
         0\\
         \\
         \dfrac{\eta(P^*)P^*\zeta_k k^4 \pi^4}{2l^3}
    \end{array}\right).
\end{aligned}
\end{equation}

However, \eqref{multip} is impossible because the determinant of the coefficient matrix on the left-hand side is zero by \eqref{determ}. Thus, we obtain the contradiction and it follows that 
\begin{equation*}
   \frac{d}{d \chi_2} (\mathcal{D}_{(S,I,P)}\mathcal{F}(\chi_0, S^*,I^*,P^*)y_0) \notin \mathcal{R}(\mathcal{D}_{(S,I,P)}\mathcal{F}(\chi_0, S^*,I^*,P^*)),
\end{equation*}\\
i.e., the condition (iv) of \cite[Lemma 2]{song2017stability} is satisfied. Thus, we have shown that all the conditions of \cite[Lemma 2]{song2017stability} are satisfied and obtain the following result.
\begin{theorem} \label{Theorem}
    Assume that $(\mathbf{H_1})$, $(\mathbf{H_2})$ holds. Suppose that for positive integers $k,j \in \mathbb{Z^+}$, $$\chi^S_{2k} \neq \chi^S_{2j}, \forall ~ k \neq j$$ where $\chi^S_k$ is given in the equation \eqref{chi_k}, $E^*=(S^*,I^*,P^*)$ is the positive equilibrium of system \eqref{eq:model1}. Then for each $k \in \mathbb{Z^+}$, there exists a small constant $\rho >0$ and a continuous function $
     \varepsilon \in (-\rho , \rho):\rightarrow (\chi_2(\varepsilon),S_k(\varepsilon,x),I_k(\varepsilon,x),P_k(\varepsilon,x))\in \mathbb{R^-}\times \mathcal{X}\times \mathcal{X} \times \mathcal{X} ~~\text{such that}$
     \begin{align}\label{ConTh1}
         &\chi_2(\varepsilon) = \chi_0 + \bold{O}(\varepsilon), \varepsilon \in (-\rho , \rho),\\  
&\nonumber (S_k(\varepsilon,x),I_k(\varepsilon,x),P_k(\varepsilon,x)) = (S^*,I^*,P^*) + \varepsilon (\Tilde{S}_k,\Tilde{I}_k,\Tilde{P}_k)+ \bold{O}(\varepsilon^2), \varepsilon \in (-\rho , \rho) ,
     \end{align}
     where $(\Tilde{S}_k,\Tilde{I}_k,\Tilde{P}_k)$ is given in equation \eqref{null_space} and $\bold{O}(\varepsilon^2) \in \mathcal{Z}$ is in the closed complement of 
      $\mathcal{N}(\mathcal{D}_{(S,I,P)}\mathcal{F}(\chi_0,S^*,I^*,P^*)$ is defined by
     \begin{equation}\label{ConTh2}
       \mathcal{Z} = \left\{(S,I,P) \in \mathcal{X}\times \mathcal{X} \times \mathcal{X}| \int_{0}^{l} (S\widehat{S}_k+I\widehat{I}_k+P\widehat{P}_k) \,dx = 0 \right\}.  
     \end{equation}
     \end{theorem}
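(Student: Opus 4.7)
The plan is to deduce Theorem \ref{Theorem} as a direct application of the abstract local bifurcation principle of Shi and Wang \cite{shi2009global}, in the convenient form recorded as \cite[Lemma 2]{song2017stability}. The setting has already been prepared: $\mathcal{F}:\mathbb{R}\times(\mathcal{X})^{3}\to (\mathcal{Y})^{3}$ with trivial branch $\mathcal{F}(\chi_2,S^*,I^*,P^*)=0$ for every $\chi_2$, and the candidate bifurcation value $\chi_0=\chi_2(k)$ is defined by the determinant condition \eqref{determ} which yields \eqref{chi_k}. My task therefore reduces to verifying, at $(\chi_0,S^*,I^*,P^*)$, the five hypotheses of that lemma, namely: (i) the trivial branch, (ii) $C^1$ dependence of $\mathcal{F}$ on $(\chi_2,S,I,P)$, (iii) $\dim\mathcal{N}(\mathcal{D}_{(S,I,P)}\mathcal{F}(\chi_0,S^*,I^*,P^*))=1$, (iv) the transversality condition $\partial_{\chi_2}\mathcal{D}_{(S,I,P)}\mathcal{F}(\chi_0,S^*,I^*,P^*)\,y_0\notin\mathcal{R}(\mathcal{D}_{(S,I,P)}\mathcal{F}(\chi_0,S^*,I^*,P^*))$, and (v) the Fredholm zero-index property.

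Conditions (i), (ii) and (v) are essentially automatic and have already been recorded: (i) by direct substitution, (ii) by inspection of \eqref{FrchetD} and \eqref{Dxeq}, and (v) from the strong ellipticity and Agmon-angle verification of $\mathbf{A}_0$ in \eqref{FrechetD2}, invoking \cite[Theorem 3.3 and Remark 3.4]{shi2009global}. For (iii), I would expand a kernel element in the Neumann eigenbasis $\{\cos(k\pi x/l)\}$, observe that the $k=0$ mode is excluded under $(\mathbf{H_2})$, and use the determinant \eqref{determ} together with the separation hypothesis $\chi^S_{2k}\neq\chi^S_{2j}$ for $k\neq j$ to conclude that only a single Fourier mode can produce a nontrivial kernel at $\chi_2=\chi_0$; solving the resulting $3\times 3$ algebraic system then yields the explicit spanning vector $(\Lambda_k,\zeta_k,1)\cos(k\pi x/l)$ written in \eqref{null_space}. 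Since $\mathcal{D}_{(S,I,P)}\mathcal{F}(\chi_0,S^*,I^*,P^*)$ is Fredholm of index zero, this automatically forces $\operatorname{codim}\mathcal{R}=1$.

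The main obstacle is (iv), the transversality check, and this is precisely where the sharp information in \eqref{determ} is used. The computation in \eqref{Dxeq} yields $\partial_{\chi_2}\mathcal{D}_{(S,I,P)}\mathcal{F}(\chi_0,S^*,I^*,P^*)y_0=(0,0,\eta(P^*)P^*\zeta_k(k\pi/l)^4\cos(k\pi x/l))^{T}$. I would argue by contradiction: if this vector lay in the range, there would exist $(S,I,P)\in(\mathcal{X})^{3}$ solving system \eqref{model5}; projecting this system onto the $\cos(k\pi x/l)$ mode produces the linear system \eqref{multip}, whose $3\times 3$ coefficient matrix is exactly the matrix in \eqref{determ} and hence singular at $\chi_2=\chi_0$. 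On the other hand the right-hand side $\eta(P^*)P^*\zeta_k k^4\pi^4/(2l^3)$ is nonzero (here one uses $\eta(P^*)>0$, which is ensured by the admissibility of $P^*<M$ together with $\zeta_k\neq 0$), so the Fredholm alternative is violated, giving the desired contradiction.

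Once (i)--(v) are secured, \cite[Lemma 2]{song2017stability} yields, for each admissible $k\in\mathbb{Z}^{+}$, a $C^{0}$ curve $\varepsilon\mapsto(\chi_2(\varepsilon),S_k(\varepsilon,\cdot),I_k(\varepsilon,\cdot),P_k(\varepsilon,\cdot))$ of non-trivial solutions to $\mathcal{F}=0$ emanating from $(\chi_0,S^*,I^*,P^*)$, with the Lyapunov--Schmidt asymptotics \eqref{ConTh1} and the transversal complement \eqref{ConTh2}. The separation assumption $\chi^S_{2k}\neq\chi^S_{2j}$ guarantees that the bifurcation values are simple, so that each $k$ produces a distinct branch of non-constant positive steady states of \eqref{eq:model1}, completing the proof.
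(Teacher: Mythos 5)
Your proposal follows essentially the same route as the paper: both reduce the theorem to verifying conditions (i)--(v) of \cite[Lemma 2]{song2017stability}, establish the Fredholm index-zero property via strong ellipticity and Agmon's condition from \cite{shi2009global}, identify the one-dimensional kernel through the Neumann eigen-expansion and the determinant condition \eqref{determ}, and prove transversality by projecting the putative range equation onto the $\cos(k\pi x/l)$ mode to reach the contradiction in \eqref{multip}. The only (welcome) refinement is your explicit remark that the contradiction needs $\eta(P^*)>0$ and $\zeta_k\neq 0$ so that the right-hand side of \eqref{multip} is genuinely nonzero, a point the paper leaves implicit.
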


\subsection{Stability analysis of boundary equilibrium - Finite time extinction}
\subsubsection{The $p=2,1>\gamma>0$ case}

Since existence of weak solution to \eqref{eq:model1} has been proven via Theorem \ref{thm:mt1}, we now consider the long time dynamics of the solution, certain components of which will reach states in short (finite) time. We first consider the case where diffusion is regular ($p=2$), but the mortality of the infected prey is heightened ($ 0 < \gamma <1$). We state the following theorem to this end.
\begin{theorem}\label{thm:FFTE}
Consider the model \eqref{eq:pde_modeld2}, with $p=2$.
There exists positive initial data $(S_0(x),I_0(x),P_{0}(x))$ and $0<\gamma<1$ such that for 
any set of positive parameters 

$(\delta_{1},\delta_{2},\delta_{3},\lambda, K, r, m,a, \chi_{1},\chi_{2},d)$ the solution $(S,I,P) \to (K,0,0)$ in $L^{2}(\Omega)$, with $I \rightarrow 0$ in finite time.
\end{theorem}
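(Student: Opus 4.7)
The plan is to first drive $I$ to zero in finite time via a spatially homogeneous ODE supersolution, and then analyse the decoupled $(S,P)$ dynamics that remain. Using $0 \leq S(x,t) \leq K$ from Lemma \ref{lem:local_existence} together with $P \geq 0$, the $I$-equation with $p = 2$ satisfies the pointwise differential inequality
\begin{equation*}
I_t - \delta_2 \Delta I \leq \lambda K\, I - \mu I^{\gamma} =: f(I).
\end{equation*}
I choose positive initial data with $S_0 > 0$ bounded below and with $v_0 := \|I_0\|_{L^\infty(\Omega)}$ strictly below the non-trivial zero $I^* := (\mu/(\lambda K))^{1/(1-\gamma)}$ of $f$; this is possible for any fixed $\gamma \in (0,1)$ and any positive parameters. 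The spatially constant function $v(t)$ solving the scalar ODE $v' = f(v)$, $v(0) = v_0$, then serves as a supersolution of the same problem satisfied by $I$ under no-flux boundary conditions.

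Because $v_0 < I^*$, on $[0, v_0]$ we have $\lambda K v^{1-\gamma} \leq \lambda K v_0^{1-\gamma} < \mu$, so $f(v) \leq - c\, v^{\gamma}$ with $c := \mu - \lambda K v_0^{1-\gamma} > 0$. Integration of $(v^{1-\gamma})' = (1-\gamma) v^{-\gamma} v' \leq - c(1-\gamma)$ gives $v(t) \equiv 0$ for all $t \geq t^* := v_0^{1-\gamma}/(c(1-\gamma))$. A weak comparison principle, applied through regularisation of the absorption by replacing $I^\gamma$ with $(I + \delta)^\gamma$ to recover classical comparison on the approximate problem, then passing $\delta \searrow 0$ using the uniform $L^\infty$ estimates of Lemma \ref{uniform_bounded}, yields $0 \leq I(x,t) \leq v(t)$ on $\Omega \times (0,\infty)$. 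Hence $I(\cdot,t) \equiv 0$ for $t \geq t^*$, establishing finite time extinction.

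Once $I$ is extinct the system decouples. The $S$-equation reduces to the scalar Fisher-KPP equation $S_t = \delta_1 \Delta S + rS(1 - S/K)$ under Neumann boundary conditions; strict positivity $S(\cdot, t^*) > 0$ on $\overline{\Omega}$ is ensured from the differential inequality $S_t \geq \delta_1 \Delta S - (r/K + \lambda)\|I\|_{L^\infty(\Omega \times (0,t^*))} S$ via the strong maximum principle on $[0, t^*]$. Classical theory of the scalar logistic equation with Neumann boundary conditions then yields $\|S(\cdot, t) - K\|_{L^\infty(\Omega)} \to 0$, hence $L^2$ convergence. For the predator, the reduced equation
\begin{equation*}
P_t = \delta_3 \Delta P - \chi_1 \nabla \cdot (\xi(P) P \nabla S) - d P,
\end{equation*}
integrated over $\Omega$ using the no-flux condition on the taxis flux, gives $\tfrac{d}{dt}\int_\Omega P\, dx = - d \int_\Omega P\, dx$, so $\|P(\cdot, t)\|_{L^1(\Omega)} \leq e^{-d(t - t^*)} \|P(\cdot, t^*)\|_{L^1(\Omega)}$. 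Interpolating with the global $L^\infty$ bound of Lemma \ref{uniform_bounded} via $\|P\|_{L^2}^2 \leq \|P\|_{L^\infty}\|P\|_{L^1}$ yields $\|P(\cdot, t)\|_{L^2(\Omega)} \to 0$, completing the convergence $(S, I, P) \to (K, 0, 0)$ in $L^2(\Omega)$.

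The main obstacle is the rigorous justification of the comparison $I \leq v$ across the extinction time, since $u \mapsto -\mu u^\gamma$ is only H\"older and not Lipschitz at $u = 0$, and classical strict comparison may fail at points where both sub- and supersolution touch zero. This is resolved at the level of weak solutions by the $\delta$-regularised absorption, classical comparison on the regularised problem (where the nonlinearity is Lipschitz), and a limit argument that uses the uniform $L^\infty$ estimates and strong compactness established earlier in the manuscript. This sublinear-absorption FTE mechanism is precisely that analysed in \cite{bedjaoui2002critical, parshad2021some, banerjee2025two}, adapted here to the eco-epidemiological setting.
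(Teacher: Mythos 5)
Your proposal is correct, but it proves finite-time extinction by a genuinely different mechanism than the paper. The paper works at the level of the $L^{2}$ energy: it multiplies the $I$-equation by $I$, integrates by parts, and then uses the Gagliardo--Nirenberg interpolation of Lemma \ref{lem:gns} together with Young's inequality to absorb the dissipation and absorption terms into a single sublinear term, arriving at a closed differential inequality $y' \leq \lambda K\, y - C_{4}\, y^{\alpha}$ with $\alpha = l/2 < 1$ for $y(t) = \|I(\cdot,t)\|_{L^{2}(\Omega)}^{2}$; FTE then follows by ODE comparison under a smallness condition on $\|I_{0}\|_{L^{2}(\Omega)}$ (made explicit later in Corollary \ref{cor:sdc}). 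You instead compare $I$ pointwise with the spatially homogeneous supersolution $v' = \lambda K v - \mu v^{\gamma}$, which requires $\|I_{0}\|_{L^{\infty}(\Omega)} < (\mu/\lambda K)^{1/(1-\gamma)}$ and a comparison principle for the non-Lipschitz absorption. Your route is more elementary --- no interpolation-exponent bookkeeping --- and yields an explicit extinction time and threshold; moreover, the comparison step can be justified even more directly than by your $\delta$-regularisation, since $u \mapsto -\mu u^{\gamma}$ is monotone decreasing, so testing the equation for $(I - v)_{+}$ and applying Gronwall already closes the argument. The trade-offs are that your smallness condition is in $L^{\infty}$ rather than $L^{2}$, and that the paper's energy method is the template it reuses verbatim for the degenerate case $p > 2$ in Theorem \ref{thm:FFTEsd}, where pointwise comparison for the $p$-Laplacian is more delicate. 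Finally, you supply the convergence $(S,P) \to (K,0)$ after the extinction time (logistic convergence for $S$, $L^{1}$ decay plus $L^{1}$--$L^{\infty}$ interpolation for $P$), a step the theorem's statement requires but which the paper's proof of Theorem \ref{thm:FFTE} does not spell out; your treatment of it is correct and is a genuine completion of the argument.
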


\begin{proof}
 We now multiply the $I$ equation in \eqref{eq:model1} by $I$, and integrate by parts over $\Omega$ to obtain

\begin{eqnarray}
\frac{1}{2} \dfrac{d}{dt} ||I||^{2}_{2} + \delta_{1} || I_{x} ||^{2}_{2} + \mu\int_{\Omega} I^{1+\gamma} dx +  \int_{\Omega} \frac{mPI^{2}}{a+I} dx &=&  \lambda\int_{\Omega}SI^{2} dx.    \nonumber \\
\end{eqnarray}

 Thus, we have via positivity that

\begin{equation}
\frac{1}{2} \dfrac{d}{dt} ||I||^{2}_{2} + C_{4}  \left( || I_{x} ||^{2}_{2} + \int_{\Omega} I^{1+\gamma}dx \right)  \leq \lambda K||I||^{2}_{2} dx,    \nonumber \\
\end{equation}
where $C_{4} = \min{(\delta_{1}, \mu)}$. We aim to show,  

\begin{equation}
 \left( || I||^{2}_{2}  \right) ^{\alpha}  \leq  C_{4}  \left( || I_{x} ||^{2}_{2} + \int_{\Omega} I^{1+\gamma}dx \right),
\end{equation}
where $0< \alpha < 1$. This will yield finite time extinction of the solution $I$, similar to the ODE

\begin{equation}
\frac{dy}{dt} = \lambda K y- C_{4}y^{\alpha}, 0< \alpha < 1, \lambda, K,  C_{4} > 0.
\end{equation}
Now we apply the Gagliardo-Nirenberg-Sobolev (GNS) inequality via lemma \ref{lem:gns}, \cite{friedman2008partial}, by considering exponents such that,

\begin{equation}
W^{k,p^{'}}(\Omega) = L^{2}(\Omega), \ W^{m,q^{'}}(\Omega) = W^{1,2}(\Omega), \ L^{q}(\Omega) = L^{1+\gamma}(\Omega),
\end{equation}
for $0 < \gamma < 1$.
This yields

\begin{equation}
\label{eq:uest}
	||I|_{L^{2}(\Omega)} \leq C ||I||^{\theta}_{W^{1,2}(\Omega)} ||I||^{1 - \theta}_{L^{q}(\Omega)}, 
	\end{equation}
 as long as

%

	\begin{equation}
	\label{eq:1h}
	\frac{2-q}{2+q} \leq \theta \leq 1.
	\end{equation}
We raise both sides of \eqref{eq:uest} to the power of $l$, $0<l<2$, to obtain

%
	
	\begin{equation}
	\left( \int_{\Omega} I^{2}dx \right)^{\frac{l}{2}} \leq C\left( \int_{\Omega}  (I_{x}) ^{2}dx \right)^{\frac{l \theta}{2}} \left( \int_{\Omega}  I^{q}dx \right)^{\frac{l (1-\theta)}{q}}.
	\end{equation}
	Using Young's inequality on the right hand side (for $a b \leq \frac{a^{r}}{r} + \frac{b^{m}}{m}$), with $r = \frac{2}{l \theta}, \ m= \frac{q}{l (1-\theta)}$, yields
	
	\begin{equation}
	\left( \int_{\Omega} I^{2}dx \right)^{\frac{l}{2}} \leq C \left( \int_{\Omega}  (I_{x}) ^{2}dx + \int_{\Omega}  I^{q}dx \right).
	\end{equation}
	We notice that given any $1<q=1+\gamma <2$, it is always possible to choose $0<l<2$, such that, $\frac{1}{r} + \frac{1}{m} = 1$,
	
	\begin{equation}
\frac{1}{r} + \frac{1}{m} = 	\frac{l \theta}{2}  + \frac{l (1-\theta)}{q} = 1,
	\end{equation}
	by choosing
	
	\begin{equation}
	\theta  = \frac{\frac{1}{l} - \frac{1}{q} }{\frac{1}{q} - \frac{1}{2} } = \frac{2(q-l)}{l(2-q)},
	\end{equation} 
	thus we need to choose $l$ such that,
	
	\begin{equation}
	 \frac{2(q-l)}{l(2-q)} \geq \frac{2-q}{2+q}
	\end{equation} 
	or
	
	\begin{equation}
	 \frac{1}{l} \geq \frac{(2-q)^{2}}{2q(2+q)} + \frac{1}{2q}.
	\end{equation} 
This is in the case $n=1$. For $n=2$ case, given $1<q^{*}<2$, we need to choose $l$ such that,

\begin{equation}
	 \frac{2(q^{*}-l)}{l(2-q^{*})} \geq \frac{2-q^{*}}{2},
	\end{equation} 

    or

    \begin{equation}
    \label{eq:el2}
	 \frac{1}{l} \geq \frac{(2-q^{*})^{2}}{4q^{*}} + \frac{1}{q^{*}},
	\end{equation} 
    
	whilst again being mindful that $0 \leq \theta \leq 1$.
This enables the application of Young's inequality above, within the required restriction \eqref{eq:1h}, enforced by the GNS inequality when $n=1,2$. Now, if it turns out that $C < C_{4}$, then

\begin{equation}
	\left( \int_{\Omega} I^{2}dx \right)^{\frac{l}{2}} \leq C \left( \int_{\Omega}  (I_{x}) ^{2}dx + \int_{\Omega}  I^{q}dx \right) < C_{4} \left( \int_{\Omega}  (I_{x}) ^{2}dx + \int_{\Omega}  I^{q}dx \right).
	\end{equation}

Thus, we have
\begin{equation}
\frac{1}{2} \dfrac{d}{dt} ||I||^{2}_{2} + C_{4} \left(  ||I||^{2}_{2} \right)^{\frac{l}{2}}   \leq \lambda K ||I||^{2}_{2} dx. \nonumber \\
\end{equation}

If not, the $C_{4}$ in the equation above is adjusted accordingly.
Let  $\alpha=\frac{l}{2} <1$. We have that $||I||^{2}_{2} \rightarrow 0$ as $t \rightarrow T^{*} < \infty$, for sufficiently chosen initial data, similar to the ODE,

\begin{equation}
\frac{dy}{dt} = \lambda K y - C_{4}y^{\alpha}, 0< \alpha < \lambda,  K, C_{4} > 0.
\end{equation}
This completes the proof.
\end{proof}

\begin{corollary}
\label{cor:sdc}
Consider the model \eqref{eq:pde_modeld2}, with $p=2, \gamma =1, n=2$. There exists parameters $(\delta_{1},\delta_{2},\delta_{3},\lambda, K, r, m,a, \chi_{1},\chi_{2},d)$, under the assumptions of Theorem \ref{thm:lasymcnd}, for which the interior equilibrium $(S^{*},I^{*},P^{*})$ is locally stable. However, for the same parameters with $0<\gamma<1$, if the initial data
 satisfies 
 \begin{equation}
 ||I_{0}||^{2}_{2} < \left(\frac{C_{4}}{\lambda K} \right)^{\left(\frac{1}{1-\left(  \frac{2(1+\gamma)}{(1-\gamma)^{2}+4}\right)}\right)},
 \end{equation}
  the solution $(S,I,P) \rightarrow (K,0,0)$ in $L^{2}(\Omega)$. 
\end{corollary}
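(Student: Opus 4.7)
The first assertion requires no new work: with $\gamma = 1$ the hypotheses coincide with those of Theorem \ref{thm:lasymcnd}, so under the stated parameter ranges the interior equilibrium $(S^{*},I^{*},P^{*})$ is locally asymptotically stable. The substance of the corollary lies in the second assertion, which I propose to derive by specializing the energy/GNS argument of Theorem \ref{thm:FFTE} to $n=2$ and extracting an explicit admissible exponent $\alpha$ that matches the smallness threshold stated in the corollary.

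Testing the $I$-equation against $I$, discarding the non-negative predation integral, and absorbing the diffusion and mortality contributions into $C_{4}=\min(\delta_{2},\mu)$, one obtains verbatim as in the proof of Theorem \ref{thm:FFTE} the differential inequality
\begin{equation*}
\frac{1}{2}\frac{d}{dt}\|I\|_{L^{2}(\Omega)}^{2} + C_{4}\Bigl(\|\nabla I\|_{L^{2}(\Omega)}^{2} + \|I\|_{L^{1+\gamma}(\Omega)}^{1+\gamma}\Bigr) \leq \lambda K \|I\|_{L^{2}(\Omega)}^{2}.
\end{equation*}
Next I apply GNS to bound $\|I\|_{L^{2}}^{l}$ by $\|\nabla I\|_{L^{2}}^{l\theta}\|I\|_{L^{1+\gamma}}^{l(1-\theta)}$ and then use Young's inequality with conjugate exponents $2/(l\theta)$ and $(1+\gamma)/(l(1-\theta))$. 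For $n=2$ and $q^{*}=1+\gamma$, the admissibility constraint \eqref{eq:el2} forces
\begin{equation*}
\frac{1}{l} \geq \frac{(1-\gamma)^{2}}{4(1+\gamma)} + \frac{1}{1+\gamma} = \frac{(1-\gamma)^{2}+4}{4(1+\gamma)} .
\end{equation*}
Taking $l$ at its extremal value produces an inequality $(\|I\|_{L^{2}}^{2})^{\alpha} \leq C_{4}(\|\nabla I\|_{L^{2}}^{2} + \|I\|_{L^{1+\gamma}}^{1+\gamma})$ with
\begin{equation*}
\alpha = \frac{l}{2} = \frac{2(1+\gamma)}{(1-\gamma)^{2}+4} \in (0,1),
\end{equation*}
reducing the evolution inequality, with $y(t) := \|I(\cdot,t)\|_{L^{2}(\Omega)}^{2}$, to the scalar comparison $y'(t) \leq 2\lambda K\, y(t) - 2C_{4}\, y(t)^{\alpha}$.

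The assumed upper bound on $\|I_{0}\|_{L^{2}}^{2}$ is exactly the condition that places $y(0)$ strictly below the threshold $(C_{4}/(\lambda K))^{1/(1-\alpha)}$ at which $2\lambda K y$ balances $2C_{4}y^{\alpha}$, so $y'(0)<0$; the inequality is self-propagating and $y$ is monotonically decreasing. Once $y$ falls below $(C_{4}/(2\lambda K))^{1/(1-\alpha)}$ the sublinear sink dominates, $y' \leq -C_{4}y^{\alpha}$, and the ODE with $\alpha<1$ extinguishes in finite time $T^{*}<\infty$, yielding $I(\cdot,t)\equiv 0$ in $L^{2}(\Omega)$ for $t\geq T^{*}$. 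For $t>T^{*}$ the $S$-equation decouples to the scalar logistic reaction--diffusion equation $S_{t} = \delta_{1}\Delta S + rS(1-S/K)$ whose solutions tend to $K$ in $L^{2}(\Omega)$ by standard invariant-region and large-time arguments, while the $P$-equation reduces to $P_{t} = \delta_{3}\Delta P - \chi_{1}\nabla\cdot(\xi(P)P\nabla S) - dP$; testing with $P$ and bounding $\xi(P)P$ via Assumption \ref{assu2} allows the taxis term to be absorbed by Young's inequality with a small coefficient, producing $(d/dt)\|P\|_{L^{2}}^{2} \leq -c\|P\|_{L^{2}}^{2}$ and hence $P\to 0$ exponentially in $L^{2}(\Omega)$.

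The only genuinely new piece of bookkeeping beyond Theorem \ref{thm:FFTE} is identifying the extremal $l$ (equivalently $\alpha$) permitted by the planar GNS admissibility relation \eqref{eq:el2}; once this value is fixed, the corollary's smallness threshold is the exact sub-/super-critical boundary of the comparison ODE. I anticipate the only nontrivial point to be verifying that the coupling in the $P$-equation through $\nabla S$ after extinction can be uniformly absorbed (so that the exponential decay of $\|P\|_{L^{2}}^{2}$ is justified independently of the post-extinction transient of $S\to K$); this is why Assumption \ref{assu2} is invoked there, and the resulting estimate is routine rather than conceptual.
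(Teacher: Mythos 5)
Your proof is correct and follows essentially the same route as the paper's: the same $L^{2}$ energy estimate for $I$, the same planar GNS/Young computation yielding the extremal exponent $\alpha=\tfrac{2(1+\gamma)}{(1-\gamma)^{2}+4}$, and the same comparison ODE $y'\leq 2\lambda K y-2C_{4}y^{\alpha}$ with the identical sub-criticality threshold $\left(\tfrac{C_{4}}{\lambda K}\right)^{1/(1-\alpha)}$. The only cosmetic differences are that the paper analyzes the scalar ODE via the reciprocal substitution $V=1/U$ and finite-time blow-up of $U$, whereas you argue directly by monotonicity and domination of the sublinear sink, and that you spell out the post-extinction convergence $S\to K$, $P\to 0$ (which the paper merely asserts).
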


\begin{remark}
This tells us that there is initial data $(S_0(x),I_0(x),P_{0}(x))$, that will be attracted to $(S,I,P)$, when with $p=2, \gamma =1$, but with sufficiently chosen $0<\gamma<1$, the same initial condition could be attracted to $(K,0,0)$.
\end{remark}

\begin{proof}
Consider the estimate for $I$,

\begin{equation}
\frac{1}{2} \dfrac{d}{dt} ||I||^{2}_{2} +  C_{4}\left( || I||^{2}_{2}  \right) ^{\alpha} \leq  \lambda K||I||^{2}_{2}. 
\end{equation}
Choosing, $y = ||I||^{2}_{2} $ yields
\begin{equation}
\frac{dy}{dt} \leq \lambda K y - C_{4} y^{\alpha},
\end{equation}

We will consider the ODE 

\begin{equation}
\frac{dV}{dt} = \lambda K V - C_{4} V^{\alpha}, \ V(0)=v_{0}.
\end{equation}

Note via simple comparison, $y \leq V$, for all time $t$.
Set $V=\frac{1}{U}$, then

\begin{equation}
\frac{dV}{dt} =  -\frac{1}{U^{2}}\frac{dU}{dt} =   \lambda K \frac{1}{U}  - \frac{C_{2}}{U^{\alpha}}, \ U(0)=\frac{1}{V(0)}.
\end{equation}

This yields

\begin{equation}
\frac{dU}{dt}  =  -\lambda K U + C_{4} U^{2-\alpha}, \ U(0)=\frac{1}{V(0)}, \ 0<\alpha<1
\end{equation}

showing that $U$ will blow up in finite time for sufficiently large initial condition $U(0)$. That is if $U(0)$ is chosen such that

\begin{equation}
C_{4}(U(0))^{2-\alpha} > \lambda K U(0), 
\end{equation}

then 

\begin{equation}
\lim_{t \rightarrow T^{*} < \infty} U(t) = \infty.
\end{equation}

However, since $V=\frac{1}{U}$, we have
\begin{equation}
\lim_{t \rightarrow T^{*} < \infty} V(t) = \lim_{t \rightarrow T^{*} < \infty} \frac{1}{U(t)} = \frac{1}{\lim_{t \rightarrow T^{*} < \infty} U(t)} \rightarrow 0.
\end{equation}

That is, $V$ will go extinct in finite time, for initial data small enough, that is given by,

\begin{equation}
(V(0))^{1-\alpha}< \frac{C_{4}}{\lambda K}.
\end{equation}

Since $y\leq V$, $I$ will go extinct in finite time as well. 
That is,

\begin{equation}
\lim_{t \rightarrow T^{**} < \infty} y(t) \rightarrow 0
\end{equation}

for initial condition chosen such that,

\begin{equation}
(y(0))^{1-\alpha}< \frac{C_{4}}{\lambda K}.
\end{equation}

We derive the relation between $\alpha$ and $\gamma$. We will consider the case $n=2$. From \eqref{eq:el2}, it follows that

\begin{equation}
    \frac{1}{\alpha} = \frac{(1-\gamma)^{2}}{2(1+\gamma)} + \frac{2}{1+\gamma} => \alpha = \frac{2(1+\gamma)}{(1-\gamma)^{2}+4}.
\end{equation}

This implies if 

\begin{equation}
||I_{0}(x)||^{2}_{2} \leq \left(\frac{C_{4}}{\lambda K}\right)^{\left(\frac{1}{1-\left(\frac{2(1+\gamma)}{(1-\gamma)^{2}+4}\right)}\right)}, 
\end{equation}

then $I$ will go extinct in finite time, from which the convergence of $S$ to $K$ and $P$ to $0$ in infinite time follows. This proves the lemma.
\end{proof}

\begin{remark}
We now consider the case $p>2,1>\gamma>0$ - we will give a result on how the initial condition in this case compares to helping/inhibiting FTE in the case $p=2,1>\gamma>0$.
    \end{remark}

\subsubsection{The $p>2,0<\gamma<1$ case}

We now investigate the effect of ``slow" dispersal on the
finite time extinction dynamics.
\begin{theorem}\label{thm:FFTEsd}
Consider the model system \eqref{eq:pde_modeld2}, with $p>2$.
There exists positive initial data $(S_0(x),I_0(x),P_{0}(x))$ and $0<\gamma<1$ such that for 
any set of positive parameters 

$(\delta_{1},\delta_{2},\delta_{3},\lambda, K, r, m,a, \chi_{1},\chi_{2},d)$ the solution $(S,I,P) \to (K,0,0)$ in $L^{2}(\Omega)$, with $I \rightarrow 0$ in finite time.

\end{theorem}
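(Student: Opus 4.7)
The plan is to mirror the proof of Theorem \ref{thm:FFTE}, but work on the regularized system \eqref{eq:pde_modeld2}, derive an $\epsilon$-uniform scalar differential inequality for $\|I^\epsilon\|_{L^2(\Omega)}^2$ whose dissipative term enforces extinction at a finite time $T^*$, and then transfer the extinction to the weak solution using the compactness already established.

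First, I would multiply the $I^\epsilon$-equation in \eqref{eq:pde_modeld2} by $I^\epsilon$, integrate by parts over $\Omega$, invoke the first inequality of Lemma \ref{lem:pc1} to bound $(|\nabla I^\epsilon|^2+\epsilon)^{(p-2)/2}|\nabla I^\epsilon|^2 \geq |\nabla I^\epsilon|^p$ from below, and use $S^\epsilon \leq K$ and $P^\epsilon \geq 0$ to arrive at
\begin{equation*}
\tfrac{1}{2}\tfrac{d}{dt}\|I^\epsilon\|_2^2 + \delta_2\|\nabla I^\epsilon\|_p^p + \mu\|I^\epsilon\|_{1+\gamma}^{1+\gamma} \leq \lambda K \|I^\epsilon\|_2^2 .
\end{equation*}
Next I would apply Lemma \ref{lem:gns} with $k=0,\,p'=2,\,m=1,\,q'=p,\,q=1+\gamma$ to obtain the interpolation $\|I^\epsilon\|_2 \leq C\|I^\epsilon\|_{W^{1,p}}^\theta \|I^\epsilon\|_{L^{1+\gamma}}^{1-\theta}$ for some $\theta \in (0,1)$ meeting the scaling constraint of that lemma. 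For $p>2$, $0<\gamma<1$ and $n=1,2$ an admissible $\theta$ exists and can be chosen so that $\theta<\tfrac{p(1-\gamma)}{2(p-1-\gamma)}$; a short algebraic check of the inequality constraint in Lemma \ref{lem:gns} gives non-emptiness of this range. Raising the interpolation to the power $l$ and applying Young's inequality with conjugate exponents $p/(l\theta)$ and $(1+\gamma)/(l(1-\theta))$, calibrated through the balance $\tfrac{l\theta}{p}+\tfrac{l(1-\theta)}{1+\gamma}=1$, the upper bound on $\theta$ forces $l<2$ and yields
\begin{equation*}
(\|I^\epsilon\|_2^2)^{l/2} \leq C\bigl(\|\nabla I^\epsilon\|_p^p + \|I^\epsilon\|_{1+\gamma}^{1+\gamma}\bigr).
\end{equation*}
Combining with the energy identity we obtain an $\epsilon$-independent scalar differential inequality
\begin{equation*}
\tfrac{d}{dt}\|I^\epsilon\|_2^2 + C_5 (\|I^\epsilon\|_2^2)^{l/2} \leq 2\lambda K\|I^\epsilon\|_2^2, \quad 0<l/2<1,
\end{equation*}
with $C_5$ depending only on the parameters of the problem and the GNS constant.

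Arguing as in the proof of Corollary \ref{cor:sdc}, the comparison ODE $y'=2\lambda K y - C_5 y^{l/2}$ has solutions that reach $0$ at a finite time $T^*$ provided $y(0)^{1-l/2}<C_5/(2\lambda K)$. Choosing the positive initial datum $I_0$ so that its squared $L^2$-norm meets this smallness condition, we deduce uniformly in $\epsilon$ that $\|I^\epsilon(\cdot,t)\|_2 \to 0$ as $t \to T^*$. The strong convergence $I^{\epsilon_j}\to I$ in $L^p((0,T);L^p(\Omega))$ from Lemma \ref{lem:ala1}, combined with the uniform $L^\infty$ bound from Theorem \ref{thm:es22}, then implies $\|I(\cdot,t)\|_2 \to 0$ in the limit. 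Once $I \equiv 0$, the $S$-equation becomes the diffusive logistic equation $S_t=\delta_1\Delta S+rS(1-S/K)$ whose $L^2$-convergence to $K$ is standard, while the $P$-equation, thanks to Assumption \ref{assu2} (which extinguishes the prey-taxis cross-diffusion for large $P$) and the linear mortality $-dP$, yields $\|P\|_2 \to 0$ as well.

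The main obstacle is the verification at step two that the GNS--Young chain actually produces an exponent $l<2$ uniformly in the parametric regime $p>2$, $0<\gamma<1$, $n\in\{1,2\}$; this reduces to an elementary but slightly delicate algebraic inequality on $\theta$ relative to the scaling constraint of Lemma \ref{lem:gns}. A secondary subtlety is the $\epsilon \to 0$ passage: because we only control $I^\epsilon$ weakly in time through Theorem \ref{thm:es22}, we rely on the strong $L^p$-compactness of Lemma \ref{lem:ala1} together with lower semicontinuity of the $L^2$-norm to transfer the finite time extinction from the regularized solutions to the weak solution of the original system.
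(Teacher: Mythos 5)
Your proposal is correct and follows essentially the same route as the paper: the $L^2$ energy estimate for the $I$-equation, the Gagliardo--Nirenberg interpolation between $W^{1,p}$ and $L^{1+\gamma}$, Young's inequality calibrated so that $\tfrac{l\theta}{p}+\tfrac{l(1-\theta)}{1+\gamma}=1$ with $l<2$, and the comparison ODE $y'=\lambda K y - C y^{l/2}$ yielding finite time extinction for small initial data. The only difference is that you carry out the estimate on the regularized system and pass to the limit $\epsilon\to 0$ via the compactness lemmas, which is a more careful rendering of the same argument the paper performs directly on \eqref{eq:model1}.
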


\begin{proof}
 We proceed similarly and multiply the $I$ equation in \eqref{eq:model1} by $I$ and integrate by parts to obtain

\begin{eqnarray}
\frac{1}{2} \dfrac{d}{dt} ||I||^{2}_{2} + \delta_{1} || I_{x} ||^{p}_{p} + \mu\int_{\Omega} I^{1+\gamma} dx +  \int_{\Omega} \frac{mPI^{2}}{a+I} dx &=&  \lambda\int_{\Omega}SI^{2} dx,    \nonumber \\
\end{eqnarray}

yielding

\begin{equation}
\frac{1}{2} \dfrac{d}{dt} ||I||^{2}_{2} + C_{4}  \left( || I_{x} ||^{p}_{p} + \int_{\Omega} I^{1+\gamma}dx \right)  \leq \lambda K||I||^{2}_{2} dx.    \nonumber \\
\end{equation}
We again aim to show that 

\begin{equation}
 \left( || I||^{2}_{2}  \right) ^{\alpha}  \leq  C_{4}  \left( || I_{x} ||^{p}_{p} + \int_{\Omega} I^{1+\gamma}dx \right),
\end{equation}
where $0< \alpha < 1$, If so then we will have finite time extinction of the solution $I$. As earlier choosing exponents such that

\begin{equation}
W^{k,p^{'}}(\Omega) = L^{2}(\Omega), \ W^{m,q^{'}}(\Omega) = W^{1,p}(\Omega), \ L^{q}(\Omega) = L^{1+\gamma}(\Omega),
\end{equation}
for $p>2, \ 0 < \gamma < 1$. This yields

\begin{equation}
\label{eq:uest}
	||I|_{L^{2}(\Omega)} \leq C ||I||^{\theta}_{W^{1,p}(\Omega)} ||I||^{1 - \theta}_{L^{q}(\Omega)}, 
	\end{equation}
 as long as in the $n=2$ case,

	\begin{equation}
	\label{eq:1h}
	\frac{\frac{1}{1+\gamma} - \frac{1}{2}}{\frac{1}{1+\gamma} + \frac{1}{2} - \frac{1}{p}} = \theta \leq 1.
	\end{equation}
We raise both sides of \eqref{eq:uest} to the power of $l$, $0<l<2$, to obtain

%
	
	\begin{equation}
	\left( \int_{\Omega} I^{2}dx \right)^{\frac{l}{2}} \leq C\left( \int_{\Omega}  |\nabla I| ^{p}dx \right)^{\frac{l \theta}{p}} \left( \int_{\Omega}  I^{q}dx \right)^{\frac{l (1-\theta)}{q}}.
	\end{equation}
	Using Young's inequality on the right hand side (for $a b \leq \frac{a^{r}}{r} + \frac{b^{m}}{m}$), with $r = \frac{p}{l \theta}, \ m= \frac{q}{l (1-\theta)}$, yields
	
	\begin{equation}
	\left( \int_{\Omega} |I|^{2}dx \right)^{\frac{l}{2}} \leq C \left( \int_{\Omega}  |\nabla I|^{p}dx + \int_{\Omega}  |I|^{q}dx \right).
	\end{equation}
	We notice that given any $1<q=1+\gamma <2$, it is always possible to choose $0<l<2$, such that, $\frac{1}{r} + \frac{1}{m} = 1$,for $n=2$ case, being mindful that $0 \leq \theta \leq 1$. To demonstrate, we choose $\theta = \frac{\frac{1}{1+\gamma} - \frac{1}{2}}{\frac{1}{1+\gamma} + \frac{1}{2} - \frac{1}{p}}$, and see it follows that

    \begin{equation}
\frac{1}{2} - \frac{1}{1+\gamma} < \frac{\left(\frac{1}{1+\gamma} - \frac{1}{2} \right) \left( \frac{1}{p} -  \frac{1}{1+\gamma}\right)}{\left(\frac{1}{1+\gamma} - \frac{1}{p} + \frac{1}{2} \right)},
    \end{equation}
    
    demonstrating a choice of $0<l = \frac{1}{\frac{\theta}{p} + \frac{1-\theta}{1+\gamma}}<2$ is possible.
    This enables the application of Young's inequality above, within the required restriction \eqref{eq:1h}, enforced by the GNS inequality when $n=2$. Thus, we have
    
\begin{equation}
\frac{1}{2} \dfrac{d}{dt} ||I||^{2}_{2} + C_{4} \left(  ||I||^{2}_{2} \right)^{\frac{l}{2}}   \leq \lambda K ||I||^{2}_{2} dx. \nonumber \\
\end{equation}

Let  $\alpha=\frac{l}{2} <1$, and FTE occurs as earlier in Theorem \ref{thm:FFTE}.  
This completes the proof.
\end{proof}

We now compare the case $p>2,1>\gamma>0$, to the $p=2, \ 1>\gamma>0$ case - we comment on how the initial condition in this case compares to helping/inhibiting FTE, via the following lemma,

\begin{lemma}
\label{lem:l1da}
Consider system \eqref{eq:model1}, and two thresholds for initial data, $(i) \frac{C_{4}}{\lambda K} > 1$, $(ii) \frac{C_{4}}{\lambda K} < 1$. In (i) there could be initial data $I_{0}$ that is sufficient to cause FTE to the $(K,0,0)$ state, when $p>2$ or there is slow diffusion, but not when $p=2$. In (ii) there could be initial data $I_{0}$ that is sufficient to cause FTE to the $(K,0,0)$ state when $p=2$ or when there is regular diffusion, but not when $p>2$.
\end{lemma}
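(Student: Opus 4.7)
The plan is to exploit the explicit finite-time-extinction thresholds already packaged inside Corollary \ref{cor:sdc} and Theorem \ref{thm:FFTEsd}. In both regimes the argument reduces $\|I(\cdot,t)\|_{L^2(\Omega)}^2$ to the scalar comparison ODE $y' \le \lambda K y - C_4 y^{\alpha}$ with $\alpha \in (0,1)$, and the proof of Corollary \ref{cor:sdc} shows that the sufficient FTE criterion is $\|I_0\|_{L^2(\Omega)}^2 < T(\alpha) := (C_4/\lambda K)^{1/(1-\alpha)}$. The two settings give different exponents: when $p=2$ the GNS calculation in \eqref{eq:el2} yields $\alpha_1 = \tfrac{2(1+\gamma)}{(1-\gamma)^2+4}$, while when $p>2$ the calculation in the proof of Theorem \ref{thm:FFTEsd} yields $\alpha_2 = l/2$ with $l^{-1} = \theta/p + (1-\theta)/(1+\gamma)$ and $\theta = \frac{1/(1+\gamma)-1/2}{1/(1+\gamma)+1/2-1/p}$.

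The heart of the proof is to establish the strict inequality $\alpha_2 > \alpha_1$ for every $p>2$ and $\gamma \in (0,1)$. I would first check the two limits: a direct substitution gives $\alpha_2 \to 2/(3-\gamma)$ as $p \to 2^+$, which is strictly larger than $\alpha_1 = 2(1+\gamma)/((1-\gamma)^2+4)$ since $(3-\gamma)(1+\gamma) < (1-\gamma)^2+4$ reduces to $(1-\gamma)^2 > 0$; similarly $\alpha_2 \to (3+\gamma)/4 > \alpha_1$ as $p \to \infty$. I would then verify monotonicity of $p \mapsto \alpha_2(p)$ on $(2,\infty)$ by reparameterising with $\tau = 1/p \in (0,1/2)$, writing $1/\alpha_2$ as a rational function of $\tau$ via the explicit $\theta(\tau)$, and showing its derivative is negative. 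This pins down $\alpha_2 > \alpha_1$ uniformly.

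Once the comparison of exponents is in hand, the two cases follow by pure monotonicity. In case (i), $C_4/\lambda K > 1$, so $\alpha \mapsto T(\alpha) = (C_4/\lambda K)^{1/(1-\alpha)}$ is strictly increasing; the interval $[T(\alpha_1), T(\alpha_2))$ is therefore non-empty, and any $I_0$ with $\|I_0\|_{L^2(\Omega)}^2$ in this interval satisfies the sufficient FTE criterion of Theorem \ref{thm:FFTEsd} (slow diffusion case) but fails that of Corollary \ref{cor:sdc} (linear diffusion case). In case (ii), $C_4/\lambda K < 1$ makes $\alpha \mapsto T(\alpha)$ strictly decreasing, so $[T(\alpha_2), T(\alpha_1))$ is non-empty and yields data for which FTE is guaranteed only in the $p=2$ regime. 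The subsequent convergence $S \to K$, $P \to 0$ in $L^2(\Omega)$ is inherited from the logistic bound on $S$ and the linear decay of $P$ in absence of its food source, as already recorded in the proofs of Theorems \ref{thm:FFTE} and \ref{thm:FFTEsd}.

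The main obstacle I anticipate is the monotonicity step $\alpha_2'(p) > 0$: since $\theta(p)$ enters $l^{-1}$ rationally, a naive differentiation is messy. A cleaner path would be to bypass the explicit formula by noting that the $p>2$ estimate interpolates $L^2(\Omega)$ between $W^{1,p}(\Omega)$ and $L^{1+\gamma}(\Omega)$, whereas the $p=2$ estimate interpolates between the weaker $W^{1,2}(\Omega)$ and the same $L^{1+\gamma}(\Omega)$; the stronger control of the gradient in the $p>2$ case produces a smaller admissible interpolation weight on the $L^{1+\gamma}$ factor and hence a larger $\alpha$. Making this structural observation rigorous, perhaps through a direct comparison of the two GNS exponent identities, would replace the explicit differentiation with a one-line embedding argument.
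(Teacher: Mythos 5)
Your proposal follows essentially the same route as the paper: compare the extinction exponents $\alpha$ produced by the GNS/Young argument in the two regimes, note that the admissible-data threshold $\left(\tfrac{C_4}{\lambda K}\right)^{1/(1-\alpha)}$ is increasing in $\alpha$ when $\tfrac{C_4}{\lambda K}>1$ and decreasing when $\tfrac{C_4}{\lambda K}<1$, and take $\|I_0\|_{L^2(\Omega)}^2$ strictly between the two thresholds so that the sufficient FTE criterion holds in exactly one regime. The only real difference is that the paper dispatches the key inequality $\alpha_{p>2}>\alpha_{p=2}$ in one line by comparing $\tfrac{\theta}{p}+\tfrac{1-\theta}{1+\gamma}$ with $\tfrac{\theta}{2}+\tfrac{1-\theta}{1+\gamma}$ for the \emph{same} $\theta$, whereas you propose the more careful check (limits at $p\to 2^{+}$ and $p\to\infty$ plus monotonicity in $p$, or the embedding $W^{1,p}\hookrightarrow W^{1,2}$), which is arguably more honest since $\theta$ itself depends on $p$; both lead to the same conclusion.
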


\begin{proof}
When $p>2$, we have from Theorem \ref{thm:FFTEsd} and \eqref{eq:1h} in particular, that $l=\frac{1}{\frac{\theta}{p} + \frac{1-\theta}{1+\gamma}} > \frac{1}{\frac{\theta}{2} + \frac{1-\theta}{1+\gamma}} $, which is when $p=2$. Thus, since $||I_{0}||^{2}_{2} < \left( \frac{C_{4}}{\lambda K} \right)^{\frac{1}{1-\frac{l}{2}}}$ causes FTE extinction. Clearly,

\begin{equation}
    \frac{1}{1-\left(\frac{1}{\frac{\theta}{p} + \frac{1-\theta}{1+\gamma}} \right)} > \frac{1}{1-\left(\frac{1}{\frac{\theta}{2} + \frac{1-\theta}{1+\gamma}} \right)} ,
\end{equation}
so, if $\frac{C_{4}}{\lambda K} > 1$, for FTE we require
\begin{equation}
\label{eq:2bd}
 ||I_{0}||^{2}_{2} < \left( \frac{C_{4}}{\lambda K} \right)^{\frac{1}{1-\left(\frac{1}{\frac{\theta}{2} + \frac{1-\theta}{1+\gamma}} \right)} } < \left( \frac{C_{4}}{\lambda K} \right)^{\frac{1}{1-\left(\frac{1}{\frac{\theta}{p} + \frac{1-\theta}{1+\gamma}} \right)}}.
\end{equation}

Therefore, in this case, there could be initial data $I_{0}$ that is sufficient to cause FTE to the $(K,0,0)$ state when $p>2$ or there is slow diffusion, but not when $p=2$. To see this, just consider initial data $I_{0}$ such that $||I_{0}||^{2}_{2}$ is exactly in between the two bounds in \eqref{eq:2bd}.

On the other hand if $\frac{C_{4}}{\lambda K} < 1$, then for FTE, we would have

\begin{equation}
\label{eq:2bd1}
 ||I_{0}||^{2}_{2} < \left( \frac{C_{4}}{\lambda K} \right)^{\frac{1}{1-\left(\frac{1}{\frac{\theta}{p} + \frac{1-\theta}{1+\gamma}} \right)}} < \left( \frac{C_{4}}{\lambda K} \right)^{\frac{1}{1-\left(\frac{1}{\frac{\theta}{2} + \frac{1-\theta}{1+\gamma}} \right)} }. 
\end{equation}

So in this case, there could be initial data $I_{0}$ that is sufficient to cause FTE to the $(K,0,0)$ state when $p=2$ or when there is regular diffusion, but not when $p>2$, the slow diffusion case. To see this again, just consider initial data $I_{0}$ such that $||I_{0}||^{2}_{2}$ is exactly in between the two bounds in \eqref{eq:2bd1}.
\end{proof}

\subsection{Thresholds for disease spread in temporal case with $0<\gamma <1$}

Here we consider the temporal model, so setting $\delta_{1}=\delta_{2}=\delta_{3}=\chi_{1}=\chi_{2}=0$, $\gamma =1$, in system \eqref{eq:model1}. 

\begin{definition}
\label{def:d22}
    A disease is persistent if 
    \begin{equation}
        \exists \epsilon > 0, \ I_{0} > 0, \ => \liminf_{t \rightarrow \infty}{I(t)} \geq \epsilon.
    \end{equation}
\end{definition}

It is well known from \cite{chattopadhyay2001pelicans} that the conditions for disease persistence in prey only is $\lambda K > \mu$ and persistence of disease in prey and persistence of all species is given by \eqref{eq:eqmcnd}, \eqref{eq:eqcnd2}. These essentially are,

\begin{equation*}
\lambda K > \mu , \  m> \max [d+\lambda ad(r+\lambda K)/{r(\lambda K - \mu)}, d\lambda K/r].
\end{equation*}


 We will simplify the above and consider the case when \begin{equation}
     \frac{d\lambda K}{r} > d+\frac{\lambda ad(r+\lambda K)}{r(\lambda K - \mu)}.
 \end{equation}
 This yields the following as a simplified condition for persistence of all populations,

\begin{equation}
\label{eq:pcond}
\boxed{m \left(\frac{r}{d}\right) > \lambda K > \mu, \ S(0) > \frac{\mu}{\lambda K} }.
\end{equation}

We show how this changes in the event that $0<\gamma<1$.
We state the following lemma to this end.
\begin{lemma}
\label{lem:tl1}
Consider the temporal version of \eqref{eq:model1}, when $\delta_{1}=\delta_{2}=\delta_{3}=\chi_{1}=\chi_{2}=0$, $\gamma =1$, and $\frac{d\lambda K}{r} > d+\frac{\lambda ad(r+\lambda K)}{r(\lambda K - \mu)}, \ \ \lambda K > \mu, \ S(0) > \frac{\mu}{\lambda K}$. Then if, 

\begin{equation}
R_{0}=\frac{m r}{d\lambda K} > 1. 
\end{equation}

The disease persists in the population, and the interior equilibrium $(S^{*},I^{*},P^{*})$ is locally asymptotically stable. However if $0<\gamma <1$, and  
\begin{equation}
|I_{0}| < \left(\frac{\mu}{\lambda K}\right)^{\frac{1}{1-\gamma}}.
\end{equation}
Then $I \rightarrow 0$ in finite time, and the disease cannot persist.
\end{lemma}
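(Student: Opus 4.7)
The plan is to split the argument into two disjoint parts along the cases $\gamma = 1$ and $0 < \gamma < 1$, the first being an algebraic re-packaging of existing persistence/stability results, and the second a scalar ODE comparison yielding finite time extinction.

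For the $\gamma = 1$ part, I first rewrite the threshold $R_0 > 1$ in the equivalent form $\frac{mr}{d} > \lambda K$. Together with the standing hypothesis $\frac{d \lambda K}{r} > d + \frac{\lambda a d(r + \lambda K)}{r(\lambda K - \mu)}$ and $\lambda K > \mu$, this gives exactly the simplified persistence condition \eqref{eq:pcond}, which is just \eqref{eq:eqmcnd} under the stated simplifying hypothesis. The local asymptotic stability of $(S^{*},I^{*},P^{*})$ and the persistence of the disease in the ODE then follows by invoking Lemma \ref{lemma2} (Chattopadhyay--Pal \cite{chattopadhyay2001pelicans}), with $S(0) > \mu/(\lambda K)$ ensuring one starts in the basin where $I$ cannot asymptotically vanish. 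So this half is essentially a citation after an algebraic rearrangement.

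For the $0 < \gamma < 1$ part, the strategy is to obtain a scalar differential inequality for $I(t)$ alone. Since the temporal system still admits the logistic a priori bound $S(t) \leq \max\{S(0),K\} = K$ (for initial data in the persistence regime $S(0) < K$; otherwise the same argument works with the bound $\max\{S(0),K\}$) and $P(t) \geq 0$, dropping the predation term and using $\lambda S I \leq \lambda K I$ in the $I$-equation yields
\begin{equation}
\dot{I} \;\leq\; \lambda K I - \mu I^{\gamma} \;=\; -\mu I^{\gamma}\!\left( 1 - \frac{\lambda K}{\mu} I^{1-\gamma}\right).
\end{equation}
The hypothesis $|I_0| < \left(\mu/(\lambda K)\right)^{1/(1-\gamma)}$ is exactly $I_0^{1-\gamma} < \mu/(\lambda K)$, so the bracketed factor is strictly positive at $t = 0$, forcing $\dot{I}(0) < 0$. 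A standard continuity/invariance argument then shows $I(t)$ is monotonically decreasing and the inequality $I(t)^{1-\gamma} < \mu/(\lambda K)$ persists for all $t$ in the interval of existence.

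Setting $c_0 := 1 - \tfrac{\lambda K}{\mu} I_0^{1-\gamma} \in (0,1)$, monotonicity of $I$ gives the uniform differential inequality $\dot{I} \leq -\mu c_0 I^{\gamma}$. Comparing with the scalar ODE $\dot{y} = -\mu c_0 y^{\gamma}$, whose explicit solution $y(t)^{1-\gamma} = I_0^{1-\gamma} - \mu c_0 (1-\gamma) t$ reaches zero at
\begin{equation}
T^{*} \;\leq\; \frac{I_0^{1-\gamma}}{\mu c_0 (1-\gamma)} \;<\; \infty,
\end{equation}
we conclude $I(t) \to 0$ in finite time. Persistence in the sense of Definition \ref{def:d22} is clearly violated. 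The asymptotics $S \to K$ and $P \to 0$ then follow from the decoupled logistic/linear equations that remain after $I \equiv 0$.

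The only mild obstacle is verifying that the invariance $I(t)^{1-\gamma} < \mu/(\lambda K)$ is preserved for all $t$ and that the comparison is valid despite the sign of $\lambda K I$; but since $\dot{I} < 0$ whenever $I$ lies below the threshold, the sub-level set is forward invariant and the scalar comparison is routine.
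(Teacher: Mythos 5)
Your proposal is correct, and its skeleton coincides with the paper's: the $\gamma=1$ half is the same algebraic repackaging of the persistence/stability conditions from \cite{chattopadhyay2001pelicans} (Lemma \ref{lemma2}), and the $0<\gamma<1$ half starts from the identical differential inequality $\dot I \leq \lambda K I - \mu I^{\gamma}$ obtained from $0\le S\le K$ and $P\ge 0$. The only place you diverge is in how the scalar inequality is shown to force finite-time extinction under the smallness condition $I_0^{1-\gamma}<\mu/(\lambda K)$: the paper simply points back to Theorems \ref{thm:FFTE}--\ref{thm:FFTEsd}, whose mechanism is the reciprocal substitution $V=1/U$ converting extinction of $V$ into finite-time blow-up of $U$, whereas you factor the right-hand side as $-\mu I^{\gamma}\bigl(1-\tfrac{\lambda K}{\mu}I^{1-\gamma}\bigr)$, establish forward invariance of the sub-level set, and integrate $\tfrac{d}{dt}I^{1-\gamma}\le -(1-\gamma)\mu c_0$ directly. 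Your route is more self-contained and has the small bonus of an explicit upper bound $T^{*}\le I_0^{1-\gamma}/(\mu c_0(1-\gamma))$ on the extinction time, which the reciprocal-blow-up argument does not produce as cleanly; both yield exactly the same threshold on $\|I_0\|$. Your parenthetical caveat about $S(t)\le\max\{S(0),K\}$ is a fair point the paper glosses over, and your closing remarks about $S\to K$, $P\to 0$ go slightly beyond what the lemma actually asserts but are harmless.
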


\begin{proof}
In the case, $\gamma=1$, by defining $R_{0}=\frac{m r}{d\lambda K}$, we see that utilizing results from \cite{chattopadhyay2001pelicans} we have $m \left(\frac{r}{d}\right) > \lambda K > \mu$, yields disease persistence. This implies $ R_{0}>1$, yields the persistence condition. Now if $0<\gamma <1$, from the equation for $I$, using positivity, and bound on $S$,

\begin{equation}
\frac{dI}{dt} \leq \lambda K I - \mu I^{\gamma}.
\end{equation}

Thus similar to Theorems \ref{thm:FFTE}-\ref{thm:FFTEsd}, we have that if 

\begin{equation}
|I_{0}| < \left(\frac{\mu}{\lambda K}\right)^{\frac{1}{1-\gamma}}.
\end{equation}
Then $I \rightarrow 0$ in finite time, and the disease cannot persist. This proves the lemma.
\end{proof}

\begin{remark}
This shows that the classical definition of disease persistence via definition \ref{def:d22}, will not apply in the case of increased mortality due to FTE, when $\gamma <1$. In this case there always exist positive (but possibly small) initial condition which will lead to disease extinction. Thus in this setting one may need to define a threshold $C$ for the initial data, to avoid extinction.
\end{remark}
To this end, we define
\begin{definition}
    A disease is persistent if 
    \begin{equation}
        \exists \epsilon > 0, \ C>0 \ I_{0} > C \ => \liminf_{t \rightarrow \infty}{I(t)} \geq \epsilon.
    \end{equation}
\end{definition}

The derivation of this $C$ and its dependence on $\gamma$, in the case of the temporal version of \eqref{eq:model1}, remains open.

\section{Numerical Simulations}
In this section, we numerically explore the spatiotemporal patterns generated by model \eqref{eq:model1}. The PDEPE computing package in Matlab is implemented to solve the system \eqref{eq:model1} in one dimension. Here we take $r=5,K=75, \lambda=3, m=70.8, 
a=12,\mu=3.4,d=8.3, \delta_1=1, \delta_2=1, \delta_3=2, \Omega = (0,30\pi)$(one-dimensional space) and we consider $\xi(P)=P, \eta(P)= P/(1-P)$. In this case, $\lambda K > \mu$ and $m > d + \lambda ad(r+\lambda K)/r(\lambda K - \mu)$, it can calculate that $\mathcal{E}^*=(1.695,13.83,0.6147)$ is the unique positive constant steady state solution. Then all non-negative solutions converges to $\mathcal{E}^*$ for $\chi_1=\chi_2=0$ from Fig \eqref{fig1}, where the initial value is $(0.1\text{cos}(x)+0.35;0.1\text{sin}(2x)+0.6;3\text{cos}(3x)+1.6)$. Inclusion of small taxis $\chi_1=\chi_2=10$ does not destroy global stability of $\mathcal{E}^*$ (see Fig.\eqref{fig2}), whereas the large taxis components $\chi_1=\chi_2=100$ will destroy the global stability of $\mathcal{E}^*=(1.695,13.83,0.6147)$ and enhance the spatial pattern formations (see Fig \eqref{fig3}). Further, when $\chi_1 > \chi_2$, it fulfills the stability condition \eqref{eq:main1} when considering specific numerical values for the remaining parameters. In such case, periodic solution with fluctuations emerges for the same initial values (see Fig \eqref{fig4}). Moreover, when $\chi_2 > \chi_1$, it fails to be stable and enhance the spatial pattern (see Fig \eqref{fig5}). 
\begin{figure}[hbt!]
    \centering
    \includegraphics[width=18cm]{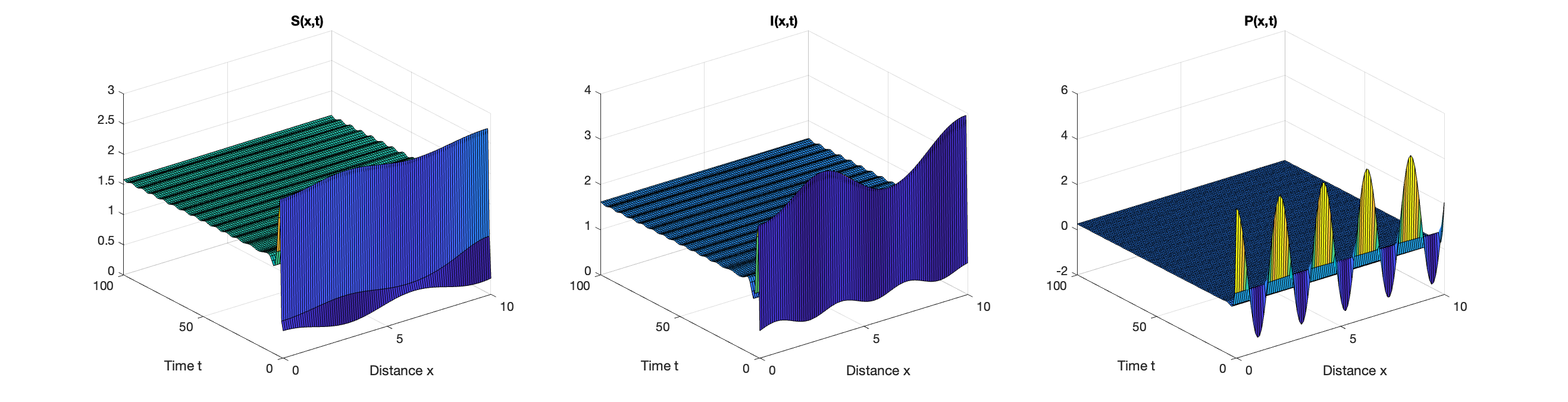}
    \caption{$\mathcal{E}^*$ remains stable for \eqref{eq:model1} when  $\chi_1=\chi_2=0$}
    \label{fig1}
\end{figure}
\begin{figure}[hbt!]
    \centering
    \includegraphics[width=18cm]{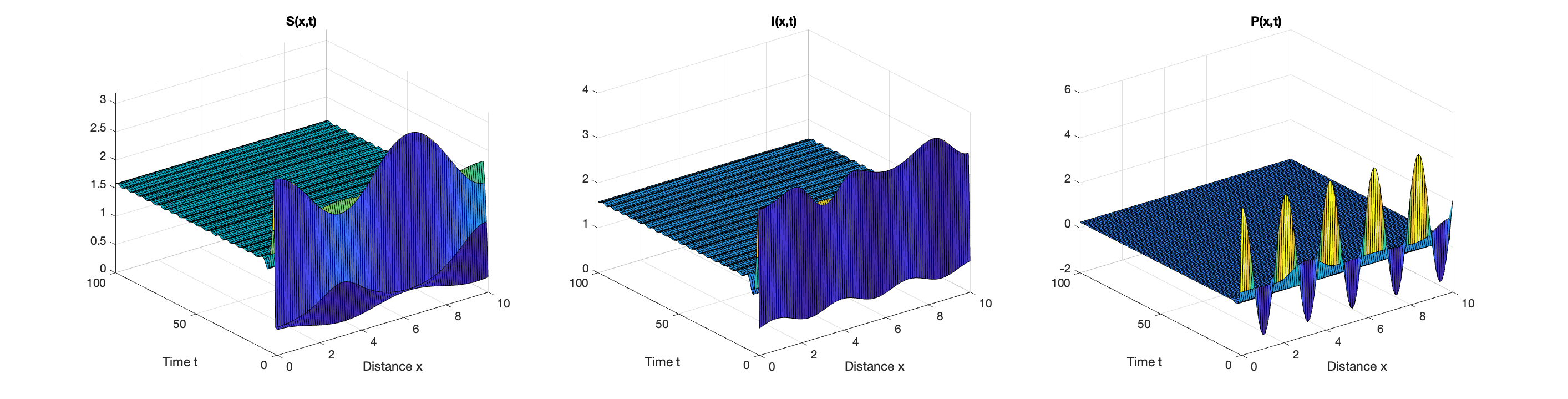}
    \caption{$\mathcal{E}^*$ remains stable for \eqref{eq:model1} for small taxis $\chi_1=\chi_2=10$}
    \label{fig2}
\end{figure}
\begin{figure}[hbt!]
    \centering
    \includegraphics[width=18cm]{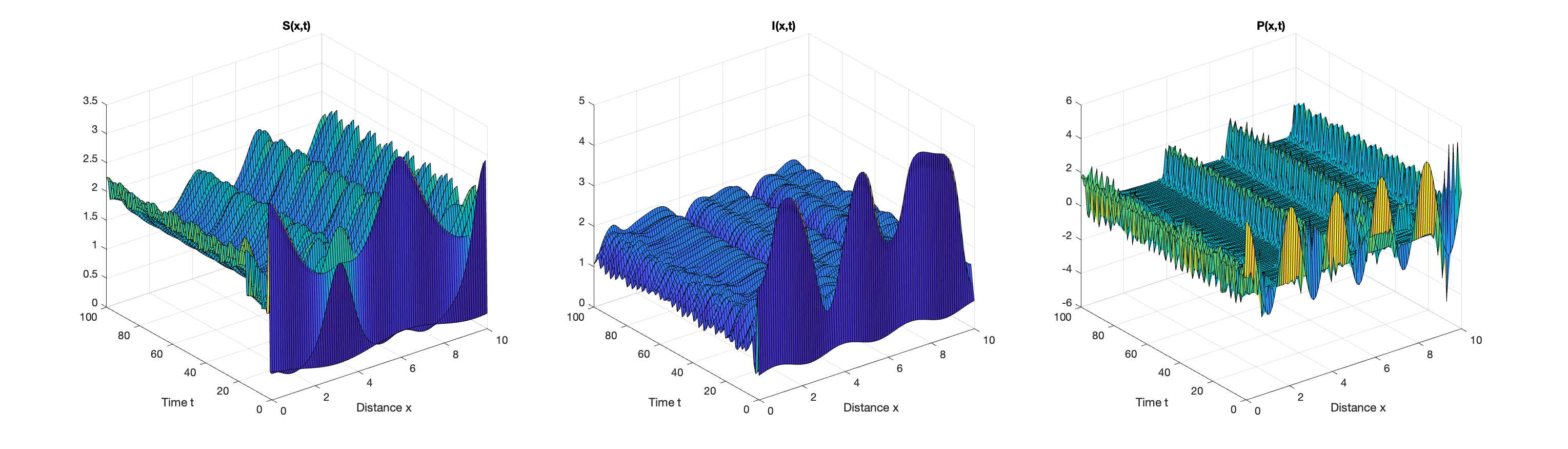}
    \caption{Instability of $\mathcal{E}^*$ induced by large taxis $\chi_1=\chi_2=100$}
    \label{fig3}
\end{figure}
\begin{figure}[hbt!]
    \centering
    \includegraphics[width=18cm]{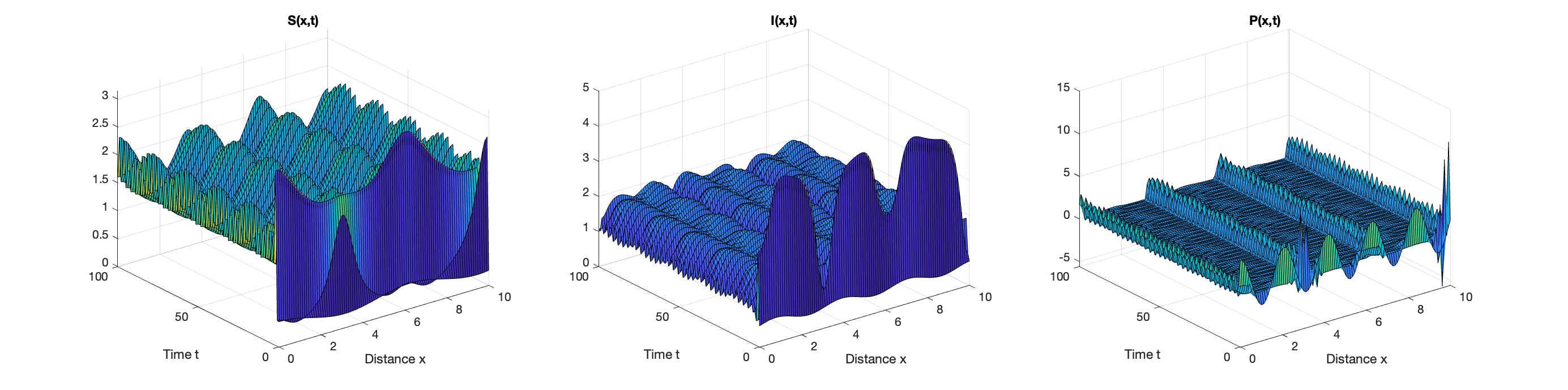}
    \caption{Induced periodicity with some fluctuation when $\chi_1=100, \chi_2=40$}
    \label{fig4}
\end{figure}
\begin{figure}[hbt!]
    \centering
    \includegraphics[width=18cm]{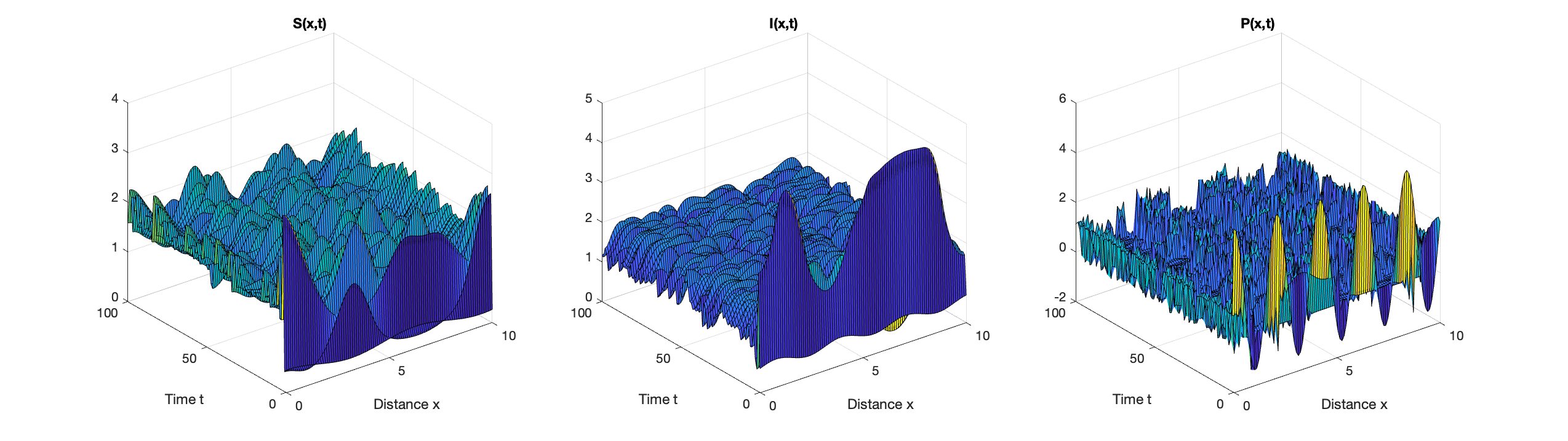}
    \caption{Irregular patterns occur when $\chi_1=40, \chi_2=100$}
    \label{fig5}
\end{figure}
\newpage
\section{Discussion and Conclusion}
Ecological interactions characterizing the behaviour of participating species in a domain are rather complex and cannot be described solely by predator-prey dynamics, which are based on the premise that the only interaction occurring in the habitat is that of the predator feeding on the prey. There are several other examples of relationships in which species depend on each other for survival, along with many other forms of interactions that shape the dynamics of populations and communities. Understanding these interactions is crucial for predicting how ecosystems respond to changes and disturbances. Although ecologically more realistic, such complex interactions are less frequently modeled due to the inherent difficulty in formulating a mathematical characterization of these interactions. Moreover, even when they are modeled, providing a thorough mathematical foundation remains a challenge due to the complex nonlinearities that govern these interactions. An important class of such interactions is represented by eco-epidemic models, which capture the interplay between ecological and epidemiological dynamics. In these models, a particular species contracts a disease, and in addition to predation, the disease also influences its behavior. Moreover, the disease also affects the feeding behaviour of other participating species, as well as their spatial dynamics.

Though ecologically and realistically relevant, the mathematical literature on spatiotemporal eco-epidemic remains comparatively limited. To the best of our knowledge, there have been very no studies that consider the role of complex nonlinear diffusion in such models. Our work aims to bridge this gap. With this motivation, we study an eco-epidemic model that incorporates slow diffusion for the infected prey class, along with prey-taxis directed towards both the susceptible and infected prey populations.

 The mathematical analysis of the model has been carried out in two cases:
    \begin{itemize}
    \item In the first case, we assume that all participating species undergo linear diffusion characterizing random movement in the domain, and the predator exhibits attractive taxis via which its motility is directed towards the area of prey abundance. Under the assumption of a so-called \textit{volume-filling effect} for ecological systems, which is standard for such interactions, we establish the global existence of classical solutions for this scenario. 
    \item In the second case, we focus on a scenario where the infected prey undergoes slow diffusion governed by the $p$-Laplacian operator. Additionally, its growth is inhibited by a modified mortality term. We prove the global existence of weak solutions for this scenario. The inclusion of these complexities further divides the analysis into two subcases, and we demonstrate that when the infected prey experiences nonlinear absorption, finite-time extinction of the infected class can occur.
\end{itemize}
After conducting an extensive mathematical analysis of the proposed model, we turn our attention to the stability analysis of the positive equilibrium in the case where all species undergo linear diffusion, the mortality of the infected prey is normal, and the predator’s taxis is directed solely towards the infected prey. We prove the existence of steady-state bifurcation for this system using standard bifurcation theory, specifically in the context of constructing a zero-order Fredholm operator for the elliptic system.
Our study tends to focus on the understanding of complex ecological interactions via the formulation and analysis of nonlinear partial differential equation systems, incorporating mechanisms such as nonlinear diffusion, cross-diffusion (taxis), and density-dependent mortality, to capture realistic ecological and epidemiological processes. 

Our results have several realistic implications that can be interpreted in the context of aphid invasions in agricultural crops incorporating FTE mechanism:
\begin{itemize}
    \item  The aphid is a primary insect pest on many agricultural crops \cite{dedryver2010conflicting}. The soybean and cotton aphids in particular cause large scale damages to agricultural yield in the United States \cite{ragsdale2011ecology}.
    \item Although integrated pest management programs promote the use of classical top down biological control, such as predators and parasitoids, the use of disease is less explored. However, there is evidence that fungicides negatively effect aphid populations \cite{koch2010non} - and that these populations get slower and more lethargic due to fungicide applications, as well as suffer enhanced mortality. Thus, future models for aphid control can draw from \eqref{eq:model1}, with a combination of fungicidal applications. Here one can assume predators such as the asian beetle (\emph{Harmonia axyrids}) will feed on the infected aphid, but chase after both susceptible and infected aphids. Further, the fungicide application will slow down the infected aphid and enhance their mortality. Further Theorem \ref{thm:FFTEsd} tells us that extinction of the infected aphid could be possible, and Lemma \ref{lem:l1da} describes what parametric and initial data restrictions this is possible for.
\end{itemize}
Our results also have several realistic implications in the context of arthopod invasions \cite{brown2011global}. These include the invasion of the chief predator of the Soybean aphid, the Asian beetle (\emph{Harmonia axyrids}), in the Unites States \cite{koch2008bad}. 
\begin{itemize}
    \item The multicolored Asian Beetle (\emph{Harmonia axyrids}) was first detected in the North-Central US back in the mid 90's. It is the chief predator of the Soybean aphid and has been able to suppress aphid populations in laboratory conditions. However, aphid populations will still reach very high densities during the summer months \cite{ragsdale2011ecology, ragsdale2004soybean}. Current investigations have focused on regime shifts, as well as integrated pest management programs that include competing enemies of the aphid, such as parasitoids \cite{bahlai2015shifts, ragsdale2011ecology, o2018rapid}. 

    \item However, current work has underexplored the impact of disease in the aphid on the beetle densities, and the resulting population dynamics as well as possible regime shifts. This further alludes to larger questions of the paradox of enrichment and stabilization of populations under a possible combination, if factors such as insecticide use, as well as enemy competition \cite{raffel2008parasites} - where now disease in pest could also play a key role in stabilization, as well as in pest extinction, as seen via Theorem \ref{thm:FFTE} and Theorem \ref{thm:FFTEsd}.
\end{itemize}

All in all our work has implications for both biological invasions as well as the biological control of invasive pests, when disease in the pest/prey is considered. Future work can look at dispersal mechanisms, both fast and slow, driven by porous medium type equations, instead of the $p$-Laplace type. Herein further novel dynamics could be exhibited.

\subsubsection*{Acknowledgement}
 NMT acknowledges the Ministry of Education, Government of India, for the Prime Minister’s Research Fellowship
and Grant (PMRF ID: 1602140) for providing the necessary funding. NM acknowledges funding by the German Research Foundation DFG within the SPP 2311 (Grant No.- 679005).


\end{document}